\documentclass[leqno,10pt,a4paper]{amsart}
%-------------------------------------------------------------------------------
\usepackage[full]{textcomp}
\usepackage{newpxtext}
\usepackage{cabin} % sans serif
\usepackage[varqu,varl]{inconsolata} % sans serif typewriter
\usepackage[bigdelims,vvarbb]{newpxmath} % bb from STIX
\usepackage[cal=cm,bb=esstix,bbscaled=1.05]{mathalfa} % mathcal
\usepackage[margin=2.4cm]{geometry}
\usepackage{savesym}
\let\savedegree\bigtimes
\let\bigtimes\relax
\usepackage{mathabx}
\let\bigtimes\savedegree
\usepackage[usenames,dvipsnames]{color}
\usepackage{hyperref}
\hypersetup{
 colorlinks,
 linkcolor={red!50!black},
 citecolor={blue!50!black},
 urlcolor={blue!80!black}
}
\usepackage{tikz}
\usepackage{graphicx}
\usepackage[all,cmtip]{xy}
\usepackage{mleftright}
\usepackage{booktabs}
\usepackage{cite}
\usepackage{enumitem}
%-------------------------------------------------------------------------------

\setlist[enumerate]{labelsep=*, leftmargin=1.5pc}
\setlist[enumerate]{label=\normalfont(\roman*), ref=\roman*}
%-------------------------------------------------------------------------------
\newtheorem{thm}{Theorem}[section]
\newtheorem{lemma}[thm]{Lemma}
\newtheorem{cor}[thm]{Corollary}

%-------------------------------------------------------------------------------
\theoremstyle{definition}
\newtheorem{example}[thm]{Example}
\newtheorem{remark}[thm]{Remark}
\newtheorem{definition}[thm]{Definition}
\newtheorem{conjecture}[thm]{Conjecture}

\numberwithin{equation}{section}
%-------------------------------------------------------------------------------
\newcommand{\Z}{\mathbb{Z}}
\newcommand{\Q}{\mathbb{Q}}
\newcommand{\R}{\mathbb{R}}

\newcommand{\N}{\mathbb{N}}
\newcommand{\eps}{\varepsilon}

\renewcommand{\dim}[1]{\operatorname{dim}\mleft({#1}\mright)}

\newcommand{\ph}{\varphi}
\newcommand{\cB}{\mathcal{B}}

\DeclareMathOperator{\Hilb}{Hilb}
\newcommand{\on}{\operatorname}
\newcommand{\ol}{\overline}
\newcommand{\wt}{\widetilde}

\newcommand{\la}{\langle}
\newcommand{\ra}{\rangle}
\newcommand{\pr}{\mathbb{P}}
%-------------------------------------------------------------------------------
\graphicspath{{images/}}
%-------------------------------------------------------------------------------
\begin{document}
%-------------------------------------------------------------------------------
\author[B.\,Wormleighton]{Ben~Wormleighton}
\address{Department of Mathematics\\University of California at Berkeley\\Berkeley, CA\\94720\\USA}
\email{b.wormleighton@berkeley.edu}
%-------------------------------------------------------------------------------
\keywords{del Pezzo surface, Hilbert series, orbifold singularity, mirror symmetry}
\subjclass[2010]{14J45 (Primary); 14J17, 13A02 (Secondary)}
%-------------------------------------------------------------------------------
\title{Reconstruction of singularities on orbifold del Pezzo surfaces from their Hilbert series}
\maketitle
%-------------------------------------------------------------------------------
\begin{abstract}
The Hilbert series of a polarised algebraic variety $(X,D)$ is a powerful invariant that, while it captures some features of the geometry of $(X,D)$ precisely, often cannot recover much information about its singular locus. This work explores the extent to which the Hilbert series of an orbifold del Pezzo surface fails to pin down its singular locus, which provides nonexistence results describing when there are no orbifold del Pezzo surfaces with a given Hilbert series, supplies bounds on the number of singularities on such surfaces, and has applications to the combinatorics of lattice polytopes in the toric case.
\end{abstract}

\section{Introduction}

A fundamental invariant of a polarised algebraic variety $(X,D)$ - that is, a pair consisting of a projective variety $X$ and an ample divisor $D$ on $X$ - is its Hilbert series
$$\Hilb_{(X,D)}(t):=\sum_{d\geq 0}h^0(dD)t^d$$
The Hilbert series is well-known to depend on the embedding supplied by $D$. Suppose that $X$ is a Fano variety, so that its anticanonical divisor $-K_X=\wedge^{\dim X}TX$ is ample. We will always consider Fano varieties to be polarised by their anticanonical divisor and denote
$$\Hilb_X(t):=\Hilb_{(X,-K_X)}(t)$$
A basic question one can pose is how much information about $X$ one can recover from $\Hilb_X(t)$. For example, the dimension and degree of $X$ are readable from the Hilbert polynomial $h_X(d)=h^0(dD)$ of $X$, which can be recovered from its Hilbert series, whereas much information about the singular locus of $X$ cannot be recovered. For example, in this setting the Hilbert series is invariant under $\Q$-Gorenstein deformations. 
\\

This work focuses on del Pezzo surfaces - two dimensional Fano varieties - with a restricted class of isolated singularities and asks what information about the singularities of $X$ can be determined from $\Hilb_X(t)$.
\\

The singularities of interest here are \textit{isolated cyclic quotient surface singularities}, which are given locally by quotients of $\mathbb{A}^2$ by cyclic groups. Let $\mu_r$ denote the group of $r$th roots of unity generated by a primitive $r$th root of unity $\eps$. Write $\frac{1}{r}(a,b)$ for the affine singularity
$$\mathbb{A}^2/\mu_r:=\on{Spec}k[x,y]^{\mu_r}\text{ where $\eps\cdot(x,y)=(\eps^ax,\eps^by)$}$$
This singularity is isolated at the origin if and only if $\on{hcf}(r,a)=\on{hcf}(r,b)=1$. Due to the ambiguity of choice of primitive root of unity, one may as well assume that such a singularity is of the form $\frac{1}{r}(1,a)$. This still doesn't produce a unique representation of the singularity: if $a\ol{a}\equiv1\on{mod}{r}$ then $\frac{1}{r}(1,a)$ is isomorphic to $\frac{1}{r}(1,\ol{a})$ by the change of primitive root given by $\eps\mapsto\eps^{\ol{a}}$.
\\

A del Pezzo surface with only finitely many singularities all of which are isolated quotient cyclic singularities is called an \textit{orbifold del Pezzo surface}. The collection of singularities of an orbifold del Pezzo surface $X$ is termed the \textit{extended basket} of $X$ and denoted by $\wt{\mathcal{B}}_X$. The extended basket of $X$ has no chance of being recovered from the Hilbert series alone because of deformation-invariance. Following Koll\'ar--Shepherd-Barron \cite{ksb}, a cyclic quotient singularity is a $T$\textit{-singularity} if it admits a $\Q$-Gorenstein smoothing. By the work of Akhtar--Kasprzyk \cite{ak}, one can decompose any isolated cyclic quotient singularity into a collection of $T$-singularities and a single \textit{residual singularity} called the \textit{residue} that cannot be smoothed any further. The collection of residues of singularities on $X$ is called the \textit{basket} of $X$ and denoted by $\cB_X$. We will construct a subset of $\cB_X$ called a \textit{reduced basket} for $X$, denoted $\mathcal{RB}$. Subject to two conjectures of combinatorial and number-theoretic nature - Conjectures \ref{conj:1} and \ref{conj:2} - the main results of this paper are:

\begin{thm}[Theorem \ref{thm:main}] Fix a power series $H(t)\in\N\ldbrack t\rdbrack$. Either there are no orbifold del Pezzo surfaces with Hilbert series equal to $H(t)$, or
\begin{itemize}
\item a reduced basket for an orbifold del Pezzo surface with Hilbert series equal to $H(t)$ is one of a finite number of possibilities, which are determined by $H(t)$
\item the extended basket of such an orbifold del Pezzo surface with basket $\mathcal{RB}$ is given by adding a number controlled by $\mathcal{RB}$ and $H(t)$ of singularities obtained from $T$-singularities to the singularities in $\mathcal{RB}$.
\end{itemize}
\end{thm}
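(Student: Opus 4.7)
The plan is to exploit the orbifold Riemann--Roch formula which, for an orbifold del Pezzo surface $X$, expresses $\Hilb_X(t)$ as the sum of a ``smooth'' part depending only on $K_X^2$ and a local contribution $Q_p(t)$ from each singularity $p \in \wt{\cB}_X$. Recovery of $K_X^2$ from the Hilbert polynomial is immediate, and the remaining orbifold contribution, which controls the periodic corrections to the coefficients of $\Hilb_X(t)$, must equal $\sum_{p \in \wt{\cB}_X} Q_p(t)$. The strategy is to use the Akhtar--Kasprzyk decomposition to split each $Q_p(t)$ into a residue contribution together with a $T$-singularity contribution of a very rigid form, and then to identify the residue contributions directly from $H(t)$.

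First I would expand each singularity $\frac{1}{r}(1,a) \in \wt{\cB}_X$ via its Akhtar--Kasprzyk mutation chain down to its residue: the intermediate steps all represent $T$-singularities, and one verifies that the local Hilbert series contribution of a $T$-singularity $\frac{1}{dn^2}(1, dna-1)$ depends only on $d$ and $n$ in a completely explicit way, essentially as a consequence of $\Q$-Gorenstein smoothability and deformation invariance of $\Hilb_X$. Consequently the sum of $T$-singularity contributions is captured by a small amount of combinatorial data. After this regrouping, $H(t)$ minus the smooth term equals the sum over $\cB_X$ of residue contributions plus the aggregate $T$-singularity contribution from the full extended basket.

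Second, using the extracted degree $K_X^2$ together with Conjecture \ref{conj:1}, I would bound the admissible indices $r$ appearing in residues of $\mathcal{RB}$, producing a finite candidate list of residue contributions. Conjecture \ref{conj:2} then furnishes the number-theoretic control needed to show that the equation expressing the periodic correction of $H(t)$ as a sum of bounded-index residue contributions plus a $T$-term has only finitely many solutions. This yields a finite list of possible reduced baskets $\mathcal{RB}$, giving the first bullet. Once $\mathcal{RB}$ is fixed, subtracting its residue contributions from $H(t)$ leaves a quantity that must be a combination of standard $T$-singularity contributions whose count is bounded and explicitly controlled by $\mathcal{RB}$ and $H(t)$, yielding the second bullet.

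The principal obstacle is precisely the finiteness step: a priori a residue of arbitrarily large index could produce a periodic signature indistinguishable from a combination of smaller ones, and this is what Conjectures \ref{conj:1} and \ref{conj:2} are designed to rule out. A secondary difficulty is the bookkeeping across Akhtar--Kasprzyk chains to ensure the decomposition is compatible with the structure of the reduced basket and that the ``$H(t)$ minus residues'' remainder always lies in the non-negative cone spanned by $T$-singularity contributions; the statement only asserts a necessary description of surfaces with given $H(t)$, however, so one avoids having to realize every candidate basket geometrically.
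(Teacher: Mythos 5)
Your high-level architecture is correct: split $\Hilb_X(t)$ into the smooth part controlled by $K_X^2$ and the orbifold correction $\sum_\sigma Q_\sigma$, decompose each singularity via mutation into $T$-singularities plus a residue, and note that $T$-singularities contribute $Q_\tau=0$ so the orbifold part of $H(t)$ sees only a reduced basket. The second bullet then comes out exactly as you describe: once $\mathcal{RB}$ is fixed, the degree formula forces the remaining invisible basket to have total degree contribution $12-\mathcal{RK}^2-K_H^2$, which bounds it.

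However, there are two genuine problems in your sketch of the first bullet. First, you have the two conjectures doing each other's jobs. You use Conjecture \ref{conj:1} to ``bound the admissible indices $r$'' and Conjecture \ref{conj:2} for the finiteness of solutions, but this is backwards. The local indices appearing are already determined by the denominators $\ell(1-t^\ell)$ visible in the orbifold part of $H(t)$ and require no bounding. Conjecture \ref{conj:2} (cancelling tuples live at a single local index) is what allows the total orbifold contribution to be separated $\ell$ by $\ell$ with a well-defined $\delta$-vector at each local index; Conjecture \ref{conj:1} ($\on{rank}\Delta(\ell)=\tfrac{1}{2}\phi(\ell)$) is what underlies the finiteness of reduced baskets at each fixed $\ell$.

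Second, and more seriously, your argument contains no actual finiteness mechanism. You write that the equation ``has only finitely many solutions'' without saying why: a priori, for a fixed $\delta$-vector $\delta\in\Delta(\ell)$, the preimage $(\Phi^+_\ell)^{-1}(\delta)$ is an infinite affine sublattice of $\Z\la\on{Res}^+(\ell)\ra$. The paper's proof is that this preimage is an affine translate of $\ker\Phi^+_\ell$, that $\ker\wt\Phi^\Z_\ell$ is (by Conjecture \ref{conj:1} plus the self-dual-count argument) generated by the single cancelling tuple coming from circling the residual quiver, and then an induction on $|\mathcal{T}|$ showing that any basket whose maximal shattering is $\mathcal{T}+\lambda v$ (with $v$ a maximally shattered elementary $T$-cone) must contain a cancelling tuple once $\lambda\gg 0$, hence is not a reduced basket. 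That is the step that makes the ``reduced body'' bounded, and it is absent from your proposal. Without it the first bullet is not established.

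A minor point: your phrasing that the $T$-singularity contribution ``depends only on $d$ and $n$ in a completely explicit way'' is true but obscures the essential fact, namely that $Q_\tau=0$ identically for $T$-singularities; that vanishing is exactly what makes the reduced basket the object that $H(t)$ can see.
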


It will be made precise below what `obtained from' means and in what sense the number of additional singularities is `controlled' by $\mathcal{RB}$ and $H(t)$.

\begin{thm}[Theorem \ref{thm:deg}] For a given collection of residual singularities $\mathcal{B}$ there exist constants $m>0$ and $M>0$ computable from and dependent only on $\cB$ such that, for any orbifold del Pezzo surface $X$ with $\cB_X=\cB$,
$$m\leq K_X^2\leq M$$
\end{thm}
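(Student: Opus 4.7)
The plan is to apply Theorem~\ref{thm:main} in reverse. The reduced basket $\mathcal{RB}$ is extracted from $\cB$ by a deterministic combinatorial procedure, so there are only finitely many candidates for $\mathcal{RB} \subseteq \cB$ as $X$ ranges over orbifold del Pezzos with $\cB_X = \cB$. Fix one. By Theorem~\ref{thm:main} the extended basket $\wt{\cB}_X$ of any such $X$ is obtained from $\mathcal{RB}$ by attaching some number $N$ of singularities produced from $T$-singularities; once $\wt{\cB}_X$ is known, Reid's orbifold Riemann--Roch, combined with Kawamata--Viehweg vanishing (which identifies $\chi(-nK_X)$ with $h^0(-nK_X)$), determines the entire Hilbert series $\Hilb_X(t)$, and in particular $K_X^2$, in terms of $\wt{\cB}_X$ and $\chi(\mathcal{O}_X) = 1$.

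The key step is to bound $N$ from above in terms of $\cB$ alone. For this I would use an orbifold Noether-type identity of the form
\[
K_X^2 \;=\; 12\,\chi(\mathcal{O}_X) - \chi_{\mathrm{top}}(X) - \sum_{Q \in \wt{\cB}_X} \epsilon(Q),
\]
where $\epsilon(Q) > 0$ is a local Dedekind-sum-like invariant of the germ at $Q$. Since $-K_X$ is ample, $K_X^2 > 0$, so the total singularity contribution is at most $12 - \chi_{\mathrm{top}}(X) \leq 12$. Provided $\epsilon(Q)$ for each $T$-singularity $\tfrac{1}{dn^2}(1,dna-1)$ is bounded below by a positive constant, $N$ is uniformly bounded in terms of $\cB$.

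Once $N$ is bounded, only finitely many extended baskets $\wt{\cB}_X$ arise among orbifold del Pezzos with $\cB_X = \cB$, so only finitely many values of $K_X^2$ are realised; taking the minimum and maximum of this finite set yields the required $m$ and $M$, both computable directly from $\cB$.

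The principal obstacle is verifying the uniform positive lower bound on $\epsilon(Q)$ for $T$-singularities. Since the index $dn^2$ can be arbitrarily large one must rule out an accumulation of contributions at zero; this reduces to an arithmetic estimate on Dedekind-sum-type invariants associated to the pairs $(dn^2, dna-1)$, which should be tractable by exploiting the specific structure of $T$-singularities exhibited in Akhtar--Kasprzyk. A secondary subtlety is that the bound obtained on $K_X^2$ is sensitive to how tightly Conjectures~\ref{conj:1} and~\ref{conj:2} constrain $\mathcal{RB}$ given $\cB$; the proof as sketched inherits this conditional dependence through its appeal to Theorem~\ref{thm:main}.
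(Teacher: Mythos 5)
Your approach is essentially the paper's: decompose the degree formula $K_X^2 = 12 - \mathcal{RK}^2 - \mathcal{IK}^2$ into a reduced-basket part (finitely many choices once $\cB$ is fixed) and an extended-invisible-basket part, then exploit $K_X^2>0$ to bound the latter. The conclusions and the dependence on the two conjectures are the same.

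The one substantive comment is that the ``principal obstacle'' you flag --- a uniform positive lower bound on $\epsilon(Q)$ for $T$-singularities, to rule out accumulation at zero as $dn^2\to\infty$ --- is not actually an obstacle, and the paper already contains what you need. Example~\ref{ex:T} shows that a $T$-singularity $\frac{1}{dn^2}(1,dnc-1)$ has degree contribution $A_\tau = d$, a \emph{positive integer}, so the contribution never decays regardless of the index. More to the point, Corollary~\ref{cor:pos} shows that any nonempty cancelling tuple contributes a \emph{strictly positive integer} to the degree; combined with the nonnegativity of the $T$-singularity contributions, this means the extended invisible contribution $\mathcal{IK}^2$ lies in $\mathbb{N}$ and is bounded above by $12-\mathcal{RK}^2$ via $K_X^2>0$, giving only finitely many values. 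You should cite Example~\ref{ex:T} and Corollary~\ref{cor:pos} here instead of leaving this as an open arithmetic estimate.

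A smaller point: your appeal to Theorem~\ref{thm:main} is not quite the right lever, since that theorem fixes the Hilbert series $H(t)$, not the basket $\cB$; fixing $\cB$ does not fix $H(t)$, because the $T$-singularity contribution to $K_X^2$ (and hence the polynomial part of $\Hilb_X$) varies. The relevant finiteness --- finitely many choices of reduced basket $\mathcal{RB}\subset\cB$ --- follows directly from $\cB$ being a finite multiset, and the argument then closes with the degree formula of Lemma~\ref{lem:degf} repackaged as in the discussion preceding the theorem, without needing the full strength of Theorem~\ref{thm:main}.
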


This second result produces interesting nonexistence results of the following form.

\begin{cor}[Corollary \ref{cor:nonex}] There are no orbifold del Pezzo surfaces with Hilbert series equal to
$$\frac{1+mt+t^2}{(1-t)^3}$$
when $m\geq 11$. There are no toric del Pezzo surfaces with this Hilbert series when $m\geq10$.
\end{cor}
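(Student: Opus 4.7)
The plan is to combine Theorems \ref{thm:main} and \ref{thm:deg} with a direct computation of $K_X^2$ from the given Hilbert series.

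First, I would read $K_X^2$ off the Hilbert series. Expanding
$$\frac{1+mt+t^2}{(1-t)^3} = \sum_{d\geq 0}\left[\binom{d+2}{2} + m\binom{d+1}{2} + \binom{d}{2}\right] t^d$$
yields the Hilbert polynomial $h_X(d) = \tfrac{m+2}{2}d(d+1) + 1$. Comparing the leading term against the orbifold Riemann--Roch expression $\tfrac{K_X^2}{2}d^2+\cdots$ gives $K_X^2 = m+2$ for any orbifold del Pezzo $X$ with this Hilbert series.

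Next, I apply Theorem \ref{thm:main} with $H(t) = (1+mt+t^2)/(1-t)^3$ to produce the finite list of candidate reduced baskets $\mathcal{RB}$, together with the controlled number of $T$-singularities that must be adjoined; this packages into a finite list of candidate extended baskets $\wt{\cB}$ and in particular of possible baskets $\cB$. For each $\cB$ on this list Theorem \ref{thm:deg} furnishes an explicit upper bound $M(\cB)$ on $K_X^2$. Existence of an orbifold del Pezzo with this Hilbert series therefore forces $m+2 \leq \max_{\cB} M(\cB) =: M^\ast$, so that showing $M^\ast = 12$ by direct enumeration establishes nonexistence for $m\geq 11$.

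For the toric sharpening, one exploits the Fano polygon $P$ associated to a toric orbifold del Pezzo: the basket $\cB$ determines the singular cones at the vertices of $P$, and the Ehrhart-type identity $h^0(-dK_X) = \#(dP\cap N)$ imposes convexity and lattice-point constraints that tighten the bound from Theorem \ref{thm:deg} to $M^\ast_{\on{toric}} = 11$, yielding nonexistence for $m\geq 10$.

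The principal obstacle is the explicit enumeration of reduced baskets compatible with $(1+mt+t^2)/(1-t)^3$ and the subsequent computation of $M(\cB)$ for each. Because the numerator has only three nonzero terms, the orbifold Riemann--Roch corrections attached to the residues in $\cB$ must cancel in a very constrained way, which should sharply restrict the list; nevertheless, carrying out this bookkeeping and evaluating the bound from Theorem \ref{thm:deg} for every remaining candidate, together with the extra polytopal input in the toric refinement, is where the bulk of the work lies.
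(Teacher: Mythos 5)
Your outline for the first half is structurally right but misses the simplification that makes the argument essentially immediate. The rational function $\frac{1+mt+t^2}{(1-t)^3}$ has no poles at roots of unity other than $1$, so the ``orbifold correction part'' of $H(t)$ is identically zero. By the decomposition in Lemma~\ref{lem:hilb} (and the discussion following Corollary~\ref{cor:sdeg}), this forces the total orbifold contribution $\sum_\sigma Q_\sigma = 0$, and hence the \emph{only} reduced basket is $\mathcal{RB} = \emptyset$ with $\mathcal{RK}^2 = 0$. The ``explicit enumeration of reduced baskets'' that you flag as the principal obstacle is therefore trivial. From there the paper reads $K_H^2 = m+2$ exactly as you do and applies the constraint $K_H^2 + \mathcal{RK}^2 \leq 12$ (since $\mathcal{IK}^2 = 12 - K_H^2 - \mathcal{RK}^2$ is a nonnegative sum of terms $\on{width}(\sigma)/\ell_\sigma$ from the extended invisible basket), giving $m + 2 \leq 12$, i.e.\ nonexistence for $m \geq 11$. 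You could indeed phrase this via Theorem~\ref{thm:deg}, since that theorem's bound for $\cB=\emptyset$ is exactly $M=12$, but you should realize that no nontrivial search over candidate baskets is needed.

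Your toric refinement argument is the genuine gap. The paper does not invoke Ehrhart-theoretic lattice-point counts; the argument is that a complete two-dimensional fan has at least three maximal cones, each of which contributes positively to $\mathcal{IK}^2$ — either as a smooth cone (via the Euler-number term), a $T$-singularity, or a member of a cancelling tuple — so for a toric $X$ one must have $\mathcal{IK}^2 > 0$, which is incompatible with $\mathcal{IK}^2 = 12 - (m+2) - 0 = 10 - m$ when $m = 10$. Your appeal to ``convexity and lattice-point constraints that tighten the bound'' does not identify this mechanism and, as stated, does not produce a proof; it is unclear how to extract the sharper bound $11$ from Ehrhart considerations alone without rediscovering the positivity of $\mathcal{IK}^2$ for a complete fan.
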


Notice that the Hilbert series of a smooth del Pezzo surface is given by
$$\frac{1+mt+t^2}{(1-t)^3}$$
where $m$ is the degree. Since the $10$ smooth del Pezzo surfaces have degree at most $K_{\mathbb{P}^2}^2=9$ it is clear that there are no smooth del Pezzo surfaces with Hilbert series as in the corollary. The content is that there are also no orbifold del Pezzo surfaces with such a Hilbert series.
\\

In general the Hilbert series of an orbifold del Pezzo surface splits into two parts: an initial term, and a `total orbifold contribution'. Another application of these results is to bound the number of singularities present on an orbifold del Pezzo surface with a given total orbifold contribution and a bound on its Gorenstein index.

\begin{cor}[Corollary \ref{cor:bdd}] Fix a total orbifold contribution $Q\in\Q(t)$ and a positive integer $\ell_*$. There exists a number $N(Q,\ell_*)$ dependent only on and computable from $Q$ and $\ell_*$ such that any orbifold del Pezzo surface with total orbifold contribution $Q$ and Gorenstein index bounded above by $\ell_*$ has at most $N(Q,\ell_*)$ singularities.
\end{cor}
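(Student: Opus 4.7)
The plan is to combine Theorems \ref{thm:main} and \ref{thm:deg} with a preliminary boundedness argument that leverages the Gorenstein index constraint. First I would show that the basket $\cB_X$ lies in a finite set determined by $Q$ and $\ell_*$ as $X$ varies over orbifold del Pezzo surfaces satisfying the hypotheses. This has two components: (i) the residual cyclic quotient singularities of Gorenstein index at most $\ell_*$ form a finite collection, which should follow from the Akhtar--Kasprzyk mutation theory \cite{ak}, bounding the index $r$ of a residual $\frac{1}{r}(1,a)$ in terms of its Gorenstein index; and (ii) the multiplicity of each such residual type $\sigma_0$ in $\cB_X$ is bounded by a positive linear functional of $Q$. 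Concretely, each $Q_\sigma$ has non-negative power-series coefficients, so any non-negative linear functional $I$ --- for example $I(f) = \lim_{t\to 1}(1-t)^2 f(t)$, or evaluation of a fixed power-series coefficient --- satisfies $I(Q) = \sum_\sigma I(Q_\sigma)$, and showing $I(Q_{\sigma_0}) > 0$ for each residual type bounds the multiplicity of $\sigma_0$ by $I(Q)/I(Q_{\sigma_0})$.

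Given finitely many possibilities for $\cB_X$, Theorem \ref{thm:deg} provides an upper bound $K_X^2 \leq M(\cB)$ on each possible basket, so $K_X^2$ takes only finitely many values. Consequently the Hilbert series
$$H(t) = \frac{1 + (K_X^2 - 2)t + t^2}{(1-t)^3} + Q(t)$$
takes only finitely many values for surfaces $X$ in our class. Applying Theorem \ref{thm:main} to each such $H(t)$ bounds $|\wt{\cB}_X|$: the reduced basket $\mathcal{RB}$ is drawn from a finite list determined by $H$, and the number of singularities obtained from $T$-singularities that must be added to $\mathcal{RB}$ to form $\wt{\cB}_X$ is controlled by $\mathcal{RB}$ and $H$. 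Taking $N(Q,\ell_*)$ to be the maximum of the resulting bounds over the finitely many cases gives the corollary.

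The hard part will be the first step, specifically establishing the finiteness in (i) and the strict positivity $I(Q_{\sigma_0}) > 0$ in (ii); both should follow from explicit features of the orbifold contribution formulas together with the Akhtar--Kasprzyk classification of cyclic quotient singularities. Once these two ingredients are in place, the remainder of the argument is a routine assembly of Theorems \ref{thm:main} and \ref{thm:deg}, and the bound $N(Q,\ell_*)$ is then explicitly computable from the stated data since every finite set involved in the construction is.
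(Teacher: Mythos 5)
Your first step contains a fatal error. You claim ``each $Q_\sigma$ has non-negative power-series coefficients'' and that there is a non-negative linear functional $I$ with $I(Q_{\sigma_0})>0$ for every residual type $\sigma_0$. Both are false. For instance $Q_{\frac{1}{5}(1,1)}=\frac{t-2t^2+t^3}{5(1-t^5)}$ already has a negative coefficient, and more fatally the paper notes that hyperplane inverse pairs satisfy $Q_{\sigma^{-1}}=-Q_\sigma$, so for \emph{any} linear functional $I$ at most one of $I(Q_\sigma),\,I(Q_{\sigma^{-1}})$ can be positive. Consequently no such $I$ can bound the multiplicity of every residual type, and your claim that $\cB_X$ lies in a finite set determined by $Q$ and $\ell_*$ alone is not justified by this argument. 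Indeed it is not even true a priori: one can pad $\cB_X$ with invisible (cancelling) multisets without altering $Q$, so the finiteness of $\cB_X$ must come from somewhere else.

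That ``somewhere else'' is the Fano condition $K_X^2>0$, which your proposal never actually uses, and this is the crux of the paper's argument. The paper's route (illustrated in Example \ref{ex:l55}) is: Theorem \ref{thm:main} gives finitely many reduced baskets $\mathcal{RB}$, each of bounded size; the remaining singularities in $\wt{\cB}_X$ assemble into minimal cancelling tuples, each of which contributes a positive integer $\geq 1$ to the degree deficit $\mathcal{IK}^2$ by Corollary \ref{cor:pos}; the degree formula $K_X^2=12-\mathcal{RK}^2-\mathcal{IK}^2$ together with $K_X^2>0$ forces $\mathcal{IK}^2<12-\mathcal{RK}^2$, bounding the number of such tuples; and the $n=0$ case in the proof of Theorem \ref{thm:fin} shows a minimal cancelling tuple of local index $\ell\leq\ell_*$ has at most $\ell_*+1$ members. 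Multiplying these bounds and adding the reduced basket size gives $N(Q,\ell_*)$. Your step invoking Theorem \ref{thm:deg} to pass to finitely many values of $K_X^2$ and finitely many Hilbert series is not needed once this is in place, and your final appeal to Theorem \ref{thm:main} to ``bound $|\wt{\cB}_X|$'' is circular without the quantitative control on cancelling tuples that the Fano inequality supplies.
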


An example of such a result is:

\begin{cor}[Example \ref{ex:l55}] Suppose $X$ is an orbifold del Pezzo surface of Gorenstein index $5\ell$ with total orbifold contribution
$$\frac{2t+t^2+2t^3}{5(1-t^5)}$$
Then $X$ has at most $65\ell+17$ singularities.
\end{cor}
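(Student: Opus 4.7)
The plan is to specialize Corollary \ref{cor:bdd} to $Q=\frac{2t+t^2+2t^3}{5(1-t^5)}$ and $\ell_*=5\ell$, and to make the abstract bound $N(Q,\ell_*)$ completely explicit.

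The first step is to decompose the given total orbifold contribution as a sum over singularity types. Any orbifold del Pezzo surface $X$ satisfying the hypotheses has
$$Q = \sum_{p\in\cB_X}\ph_p(t) + \sum_{p\text{ a }T\text{-singularity of }X}\ph_p(t),$$
where $\ph_p(t)$ denotes the standard orbifold contribution of the singularity $p$. Since the Gorenstein index of $X$ equals $5\ell$, the index of every singularity $\frac{1}{r}(1,a)$ appearing satisfies $r\mid 5\ell$. For $T$-singularities, which take the form $\frac{1}{dn^2}(1,dna-1)$, this forces $dn^2\mid 5\ell$, which substantially limits the admissible pairs $(d,n)$.

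The second step is to bound each family of contributions separately. Applying Theorem \ref{thm:main} (via Conjecture \ref{conj:1}) produces the finite list of reduced baskets $\mathcal{RB}$ consistent with $Q$; enumerating them, and using that the numerator $2t+t^2+2t^3$ has degree $3$ and satisfies $2+1+2=5$ at $t=1$, yields an $\ell$-independent bound on the size of the residual part of the basket, which one aims to pin down to the additive constant $17$. For the $T$-singularity part, each $T$-singularity contributes a positive rational function consuming a definite minimum of the residues of $Q$ at the fifth roots of unity. Summing the maximum possible numbers of $T$-singularities across each allowable index $r\mid 5\ell$ produces a linear-in-$\ell$ bound, which one aims to pin down to $65\ell$.

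The main obstacle is sharpening these counts to the stated constants rather than settling for softer $O(1)+O(\ell)$ estimates. Extracting the leading coefficient $65$ in particular requires careful bookkeeping: for each divisor $r\mid 5\ell$ one must determine the maximum number of $T$-singularities of index $r$ compatible with $Q$, then aggregate these counts across $r$ without over- or under-counting the interference with the residual contributions or between indices sharing common factors. This amounts to an explicit but delicate arithmetic manipulation comparing the partial fraction decompositions of the candidate $\ph_p(t)$ against that of $Q$, using periodicity modulo $5$ to avoid redundancy. Once this calculation is executed, the bound $65\ell+17$ of Corollary \ref{cor:bdd} drops out directly.
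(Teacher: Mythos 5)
Your accounting misattributes the two sources of the bound, and one of the claims you lean on is false. A $T$-singularity $\tau$ has $Q_\tau=0$ (Example \ref{ex:T}), so the statement that ``each $T$-singularity contributes a positive rational function consuming a definite minimum of the residues of $Q$ at the fifth roots of unity'' cannot hold: the $T$-singularities are completely invisible to the orbifold correction, and no amount of partial-fraction bookkeeping against $Q$ will bound their number. Moreover, your decomposition leaves out the third part of the picture entirely: the invisible basket $\mathcal{IB}\subset\cB_X$, i.e.\ cancelling tuples of \emph{residual} singularities which, just like $T$-singularities, cancel in $\sum Q_\sigma$ but do contribute to the degree. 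It is $\mathcal{IB}$, not the $T$-locus, that drives the linear-in-$\ell$ growth.

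The actual bound follows from combining the reduced-basket computation with the Fano constraint $K_X^2>0$, which you never invoke. From Examples \ref{ex:l53}--\ref{ex:l54}, every reduced basket for this $Q$ has at most $4$ elements and degree contribution $\mathcal{RK}^2=-8/5$. Positivity of $K_X^2=12-\mathcal{RK}^2-\mathcal{IK}^2$ forces $\mathcal{IK}^2<68/5$; since each cancelling tuple and each $T$-singularity contributes at least $1$ to $\mathcal{IK}^2$ (Corollary \ref{cor:can}), there are at most $\lfloor 68/5\rfloor=13$ of them combined. This is a \emph{constant} bound on how many such tuples (and $T$-singularities) can appear, not a count that grows with $\ell$. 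What does grow with $\ell$ is the \emph{size} of a single minimal cancelling tuple: at local index $m\mid 5\ell$ the $n=0$ analysis in the proof of Theorem \ref{thm:fin} caps this at $m+1\leq 5\ell+1$ singularities. Putting these together gives $4+13(5\ell+1)=65\ell+17$. So the additive constant is $4+13=17$, split between the reduced basket and the constant term of the linear factor, not a standalone bound on the residual part as you propose; and the $65\ell$ is $13\cdot 5\ell$, the maximal aggregate size of up to $13$ cancelling tuples, not a tally of $T$-singularities over divisors of $5\ell$.
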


To conclude the introduction, we will briefly describe the context from which these results arose. Residual singularities naturally become objects of interest in the formulation of mirror symmetry for Fano varieties due to \cite{ccgk}. Their mirror construction seeks to produce a possibly singular Fano variety $X$ as a deformation of a toric variety associated to a Landau-Ginzburg model. A Landau-Ginzburg model is simply a Laurent polynomial viewed as a map $f:(\mathbb{C}^\times)^n\to\mathbb{C}$, and one can associate a toric variety $X_f$ to $f$ by taking the toric variety of the face fan of the Newton polytope of monomials in $f$. As $X$ is intended to be a deformation of $X_f$, the singularities of $X$ are related to the singularities of $X_f$: this is where the context of $T$- and residual singularities enters in, which motivated the work of Akhtar--Kapsprzyk from which the present work took its inspiration. For some recent progress in this vein of mirror symmetry, see \cite{lots}.
\\

The proof of the results within this paper will heavily rely on the combinatorial methods of toric geometry because the singularities in question are locally toric. However the results very much apply to nontoric varieties. The structure of the paper is as follows:
\begin{itemize}
\item[\S2-3] Exposition of quotient (or orbifold) singularities and Riemann-Roch for orbifolds
\item[\S4] We study how quotient singularities combinatorially and geometrically decompose via two operations - \textit{shattering} and the \textit{hyperplane sum} - and how these allow the problems at hand to be simplified. The two conjectures that the results of the paper depend upon are stated.
\item[\S5] The main results of the paper are described and established with examples and applications.
\end{itemize}

In a related work \cite{qpc} the techniques developed here are applied to the combinatorial problem of studying lattice points inside rational polytopes.

\subsection*{Acknowledgements} We would like to thank Al Kasprzyk for his inspirational support and collaboration throughout the project. We are also grateful to Alessio Corti, Mohammed Akhtar, Tom Coates, Bernd Sturmfels, and Vivek Shende for many valuable conversations. This research was partially funded by a grant from the London Mathematical Society.

\section{Quotient singularities}

In this section, we will review some of the theory of quotient or orbifold singularities. As a quotient of affine space by an abelian group, the singularity $\frac{1}{r}(1,a)$ is an affine toric variety. Our main reference for toric geometry is \cite{cls} whose notation we follow. Let $N=\mathbb{Z}^2$ and $M=N^\vee:=\on{Hom}_\mathbb{Z}(N,\mathbb{Z})$ be the cocharacter and character lattices respectively of an algebraic $2$-torus $(k^\times)^2$. In toric geometry, a cone $\sigma\subset N_\R:=N\otimes_Z\R$ whose rays are generated by lattice points in $N$ describes an affine toric variety $X_\sigma$. More generally, a collection of cones $\Sigma$ satisfying natural compatibility conditions - usually called a \textit{fan} - produces a non-affine toric variety $X_\Sigma$. The singularity $\frac{1}{r}(1,a)$ is the affine toric variety associated to the cone
$$\sigma_{r,a}=\on{Cone}(e_2,re_1-ae_2)\subset N_\R$$
The lattice height of such a cone - the lattice distance between the origin and the line segment joining the two primitive ray generators of the cone; the \textit{edge} of the cone - is called the \textit{local index} of the cone and can be calculated as in \cite{ak16} to be
$$\ell_{\sigma_{r,a}}=\frac{r}{\on{hcf}(r,a+1)}$$
The width of a cone is the number of lattice line segments along the edge of the cone. Equivalently, it's one less than the number of lattice points along the edge, which can be computed to be
$$\on{width}(\sigma_{r,a})=\on{hcf}(r,a+1)$$
We will often conflate a singularity and its corresponding cone in $N_\R$. Recall that an isolated cyclic quotient singularity is a $T$-singularity if it is $\mathbb{Q}$-Gorenstein smoothable. These were classified by Koll\'ar and Shepherd-Barron:

\begin{lemma}[\cite{ksb}, Prop. 3.10] An isolated cyclic quotient singularity is a $T$-singularity if and only if it takes the form
$$\frac{1}{dn^2}(1,dnc-1)$$
for some $c$ with $\on{hcf}(n,c)=1$.
\end{lemma}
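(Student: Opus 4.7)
The plan is to prove the equivalence via the canonical (index-one) cover: a $\Q$-Gorenstein singularity $X$ admits a $\Q$-Gorenstein smoothing precisely when its canonical cover $\pi\colon\wt X\to X$ admits a $\mu_\ell$-equivariant smoothing, where $\ell$ is the Gorenstein index. For a cyclic quotient surface singularity $\wt X$ is Gorenstein and toric of dimension two, hence a Du Val $A_k$ singularity $xy=z^{k+1}$, so the content of the lemma is in determining exactly which $(r,a)$ yield such a cover together with a compatible equivariant smoothing.

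First I would identify the canonical cover of $\frac{1}{r}(1,a)$ in toric terms. The Gorenstein index is $r/\on{hcf}(r,a+1)$, which matches the local index of $\sigma_{r,a}$ recalled above. Passing to the sublattice of $N$ spanned by the primitive edge vector of $\sigma_{r,a}$ and one lattice point on its edge, the cone becomes a cone of lattice height one with $w+1$ lattice points along the edge, where $w=\on{hcf}(r,a+1)$ is the width. This is precisely the cone of $A_{w-1}$, so $\wt X\cong\{xy=z^w\}$, and the deck transformation $\mu_\ell$ acts on $\wt X$ by $(x,y,z)\mapsto(\zeta x,\zeta^{-1}y,\zeta^b z)$ for a primitive $\ell$-th root of unity $\zeta$ and an integer $b$ read off from $a\bmod\ell$.

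Next I would impose equivariant smoothability. The miniversal deformation of $A_{w-1}$ is $xy=z^w+\sum_{i=0}^{w-1}t_iz^i$, on which $\mu_\ell$ acts with $t_i$ carrying the weight of $z^i$ under the action above. Requiring a one-parameter $\mu_\ell$-equivariant smoothing amounts to finding an invariant deformation parameter, and taking the constant term yields the clean condition $\ell\mid w$. Writing $w=d\ell$ and renaming $n=\ell$ then produces $r=\ell w=dn^2$, while $a+1=cw=cdn$ gives $a=dnc-1$; the coprimality $\on{hcf}(n,c)=1$ is forced by the requirement that $w=\on{hcf}(r,a+1)$ equal exactly $dn$ rather than a larger multiple. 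Conversely, given any triple $(d,n,c)$ with $\on{hcf}(n,c)=1$, the family $xy=z^{dn}+s$ is manifestly $\mu_n$-equivariant with $s$ of weight zero and descends to a $\Q$-Gorenstein smoothing of $\tfrac{1}{dn^2}(1,dnc-1)$.

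The main obstacle will be correctly identifying the exponent $b$ of the $\mu_\ell$-action on the coordinate $z$ of $\wt X$, since the list of invariant deformation parameters and hence the divisibility condition depends sensitively on this choice. This is essentially bookkeeping with the Smith normal form of the lattice inclusion $N'\hookrightarrow N$, but extracting the clean representative $a=dnc-1$ rather than an equivalent but messier congruence requires exploiting the freedom $\tfrac{1}{r}(1,a)\simeq\tfrac{1}{r}(1,\ol a)$ recalled in the introduction.
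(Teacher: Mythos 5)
The paper does not actually prove this lemma; it quotes it verbatim from [KSB, Prop.\ 3.10], leaving the proof to that reference. What you have reconstructed is, in fact, the original Koll\'ar--Shepherd-Barron argument (index-one cover plus equivariant versal deformation theory), so the question is whether your sketch is sound.

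The overall architecture is correct, and the conclusion $\ell\mid w$ is robust, but two steps need repair. First, the normal form $(x,y,z)\mapsto(\zeta x,\zeta^{-1}y,\zeta^b z)$ for the deck action on the $A_{w-1}$ cover $\{xy=z^w\}$ cannot be taken in general; it is circular. Writing $w=\on{hcf}(r,a+1)$, $r=\ell w$, $a+1=wc$ (so automatically $\on{hcf}(\ell,c)=1$) and choosing the generator $\zeta=\eps^w$, the action on $(x^w,y^w,xy)$ has weights $(1,a,c)$ modulo $\ell$. Forcing the $y$-weight to be $-1$ demands $a\equiv-1\pmod\ell$, i.e.\ $\ell\mid wc$, i.e.\ $\ell\mid w$ --- precisely the statement you are trying to prove. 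Replace it with the honest weights $(1,a,c)$; the computation you want still goes through, because the weight of the deformation parameter $t_i$ is $(w-i)c\pmod\ell$, which vanishes iff $i\equiv w\pmod\ell$ since $\on{hcf}(\ell,c)=1$.

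Second, ``taking the constant term'' skips the real content of the necessity direction. The family $UV=Z\,(Z^{w-1}+\cdots+t_1)$ with $t_0=0$ and generic higher $t_i$ has a smooth general fibre, so an equivariant smoothing of the cover need not use $t_0$ at all. What you must invoke from [KSB] is that a $\Q$-Gorenstein smoothing of the quotient corresponds to an equivariant smoothing of the cover on which $\mu_\ell$ acts \emph{freely} away from the central fibre; when $t_0=0$ the origin lies on the general fibre and is fixed, so the quotient family is not a smoothing. This freeness is what forces $t_0\neq0$, hence $t_0$ invariant, hence $\ell\mid w$. A minor point: the miniversal deformation of $A_{w-1}$ stops at $z^{w-2}$, not $z^{w-1}$.
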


Notice that the local index of a $T$-singularity written as in this classification is $\ell_\sigma=n$ and that there are $dn$ lattice points lying along the edge of the cone. On the level of cones, a $T$-singularity thus looks like:

\begin{center}
\begin{tikzpicture}
\node (o) at (0,0){};
\node (e2) at (0,2){$\bullet$};
\node (e2l) at (0,2.25){};
\node (v1) at (8,-2){$\bullet$};
\node (v1l) at (9,-2.25){};
\node (v2) at (6,-1){$\bullet$};
\node (v3) at (4,0){$\bullet$};
\node (v4) at (2,1){$\bullet$};
\node (vmid) at (1,1.5){};
\node (l1) at (0.7,0.7){$n$};
\node (l1) at (4.5,0.5){$dn$};

\draw (o.center) to (e2l.center);
\draw (o.center) to (v1l.center);
\draw (e2.center) to (v1.center);

\draw[<->] (o) to (vmid);
\draw[<->] (e2l) to (v1l);
\end{tikzpicture}
\end{center}

Prosaically, the cone of a $T$-singularity has width a multiple of its height. A $T$-singularity of width equal to local index - as thin as possible -  is called an elementary $T$-singularity.
\\

Crucial to the relevant formulation of mirror symmetry is the notion of polytope mutation developed in \cite{acgk}. This is a combinatorial procedure that produces from a polytope $P$, a `weight vector' $w\in M$, and an appropriate choice of facet $F\subset P$ another polytope $\mu_{w,F} P$ whose toric variety is a deformation of the toric variety associated to $P$. Applied locally to cones, it enables one to realise the smoothing of Koll\'ar and Shepherd-Barron for $T$-singularities as follows. The details of mutation can be found in \cite{acgk} \S3.
\\

Mutating the cone $\sigma_{dn^2,dnc-1}$ for $\frac{1}{dn^2}(1,dnc-1)$ in the facet given by the cone edge and with respect to the weight vector given by the inward normal to the cone edge has the effect of removing a line segment of length $\ell_\sigma=n$ from the edge of $\sigma_{dn^2,dnc-1}$.

\begin{center}
\begin{tikzpicture}[scale=0.6]
\node (o) at (0,0){};
\node (e2) at (0,2){$\bullet$};
\node (e2l) at (0,2.25){};
\node (v1) at (8,-2){$\bullet$};
\node (v1l) at (9,-2.25){};
\node (v2) at (6,-1){$\bullet$};
\node (v3) at (4,0){$\bullet$};
\node (v4) at (2,1){$\bullet$};
\node (vmid) at (1,1.5){};
\node (l1) at (0.8,0.7){$n$};
\node (l2) at (4.5,0.5){$dn$};

\draw (o.center) to (e2l.center);
\draw (o.center) to (v1l.center);
\draw (e2.center) to (v1.center);

\draw[<->] (o) to (vmid);
\draw[<->] (e2l) to (v1l);

\node (o) at (10,0){};
\node (e2) at (10,2){$\bullet$};
\node (e2l) at (10,2.25){};
\node (v1) at (18,-2){$\bullet$};
\node (v1l) at (18.25,-1.8){};
\node (v2) at (16,-1){$\bullet$};
\node (v3) at (14,0){$\bullet$};
\node (v3l) at (14.5,0){};
\node (v4) at (12,1){$\bullet$};
\node (vmid) at (11,1.5){};
\node (l1) at (10.8,0.7){$n$};
\node (l2) at (13.5,1.5){$(d-1)n$};
\node (l3) at (16.5,-0.5){$n$};

\draw (o.center) to (e2l.center);
\draw (o.center) to (v3l.center);
\draw (e2.center) to (v3.center);

\draw[<->] (o) to (vmid);
\draw[<->] (e2l) to (v3l);
\draw[<->] (v3l) to (v1l);
\end{tikzpicture}
\end{center}

The result is that $\sigma_{dn^2,dnc-1}$ mutates to $\sigma_{(d-1)n^2,(d-1)nc-1}$. Successive mutation reduces it to the empty cone; in other words, the singularity is smoothed. The same procedure applies to a general singularity $\frac{1}{r}(1,a)$ yet may not terminate in the empty cone. Indeed, by definition the only cones for which this process will reduce to the empty cone are the $T$-singularities. Let $\sigma_{r,a}$ be the cone for $\frac{1}{r}(1,a)$ and let its local index be $\ell$. In general, one will be able to remove subcones of the form $\sigma_{d\ell^2,d\ell c-1}$ from $\sigma_{r,a}$ by mutation and hence can deform to a cone with width equal to the residue of the width of $\sigma_{r,a}$; in particular, smaller than $\ell$. This cone is the \textit{residue} $\on{res}(\sigma_{r,a})$ of $\sigma_{r,a}$. A singularity is called \textit{residual} if it is an isolated cyclic quotient singularity with width less than its local index. In other words, no more line segments can be removed from its edge by mutation.

\begin{lemma} An isolated cyclic quotient singularity is a residual singularity if and only if it can be written in the form
$$\frac{1}{k\ell}(1,kc-1)$$
where $\ell$ is the local index, $0<k<\ell$, $\on{hcf}(\ell,c)=1$, and $\on{hcf}(\ell,kc-1)=1$.
\end{lemma}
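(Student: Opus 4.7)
The plan is to read off the conditions directly from the formulas for local index and width given earlier in the section, together with the definition of a residual singularity and the standard criterion for a cyclic quotient singularity to be isolated.

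First I would start from the general form $\frac{1}{r}(1,a)$ with associated cone $\sigma_{r,a}$, and recall from the preceding discussion that
\[
\ell_{\sigma_{r,a}}=\frac{r}{\on{hcf}(r,a+1)},\qquad \on{width}(\sigma_{r,a})=\on{hcf}(r,a+1).
\]
Setting $k:=\on{width}(\sigma_{r,a})=\on{hcf}(r,a+1)$, these formulas immediately give $r=k\ell$ where $\ell=\ell_{\sigma_{r,a}}$. Since $k$ divides $a+1$, I can write $a+1=kc$ for some positive integer $c$, i.e.\ $a=kc-1$. This already puts the singularity in the claimed form $\frac{1}{k\ell}(1,kc-1)$.

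Next I would translate each defining condition:
\begin{itemize}
\item The residual condition, namely that no further line segment of lattice length $\ell$ can be removed from the edge by mutation, is (as explained in the paragraph defining ``residue'') exactly $\on{width}<\ell$, i.e.\ $k<\ell$. Combined with $k\geq 1$ (there is at least one lattice point on the edge) this gives $0<k<\ell$.
\item The equality $\on{hcf}(r,a+1)=k$ with $r=k\ell$ and $a+1=kc$ reads $\on{hcf}(k\ell,kc)=k$, which forces $\on{hcf}(\ell,c)=1$.
\item The isolatedness of $\frac{1}{r}(1,a)$ at the origin was noted earlier to be equivalent to $\on{hcf}(r,a)=1$. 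Substituting gives $\on{hcf}(k\ell,kc-1)=1$; since $\on{hcf}(k,kc-1)=\on{hcf}(k,1)=1$ automatically, this reduces to $\on{hcf}(\ell,kc-1)=1$.
\end{itemize}

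For the converse direction I would run the same computation backwards: given $\frac{1}{k\ell}(1,kc-1)$ with the stated coprimality assumptions and $0<k<\ell$, the second condition gives $\on{hcf}(k\ell,kc)=k$, so the width is $k$ and the local index is $\ell$, and $k<\ell$ makes the singularity residual; the third condition together with $\on{hcf}(k,kc-1)=1$ gives $\on{hcf}(k\ell,kc-1)=1$, which is isolatedness. There is no serious obstacle here: the whole lemma is essentially a rewriting, and the only point requiring care is being explicit that the residual condition ``no $T$-subcone of width $\ell$ can be stripped off'' coincides with the purely numerical inequality $\on{width}<\ell$, which is justified by the mutation picture drawn just before the lemma.
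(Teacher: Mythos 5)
Your proof is correct and follows the same approach the paper uses: set $k=\on{hcf}(r,a+1)$, read $r=k\ell$ and $a+1=kc$ from the local index and width formulas, and then unwind residualness, the gcd equality, and isolatedness into the three stated conditions. The only difference is that you spell out the gcd manipulations and the converse explicitly, whereas the paper leaves them implicit; also note the residual condition ``width $<$ local index'' is how the paper literally \emph{defines} residual, so your appeal to the mutation picture is extra justification rather than a needed step.
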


\begin{proof} Let $\frac{1}{r}(1,a)$ be a residual singularity of local index $\ell$. Then, letting $k=\on{hcf}(r,a+1)$, $r=k\ell$ and $a+1=kc$ for some $c$ with $\on{hcf}(\ell,c)=1$. The final coprimality condition is necessary for the singularity to be isolated. The width of $\frac{1}{r}(1,a)$ is $k$ and so the condition $0<k<\ell$ is equivalent to the singularity being residual.
\end{proof}

The quantity $c$ is called the \textit{slope} of the singularity (or of the cone).

\section{Hilbert series}

Recall that the [anticanonical] Hilbert series of a Fano variety $X$ is defined to be
$$\Hilb_X(t):=\sum_{d\geq0}h^0(-dK_X)t^d$$
In \cite{ak}, Ahktar--Kasprzyk use previous work of Reid in \cite{ypg} to produce the following formula for the Hilbert series of an orbifold del Pezzo surface.

\begin{lemma}[\cite{ak}, Cor. 3.5]\label{lem:hilb} Suppose $X$ is an orbifold del Pezzo surface with basket $\cB$. Then
$$\Hilb_X(t)=\frac{1+(K_X^2-2)t+t^2}{(1-t)^3}+\sum_{\sigma\in\cB}Q_\sigma$$
where
$$Q_{\sigma_{r,a}}=\frac{1}{1-t^r}\sum_{i=1}^r(\delta_{r,a,(a+1)i}-\delta_{r,a,0})t^{i-1}$$
where $\delta_{r,a,i}$ is a \textnormal{Dedekind sum} defined as
$$\delta_{r,a,i}=\frac{1}{r}\sum_{\xi\in\mu_r\setminus\{1\}}\frac{\xi^i}{(1-\xi)(1-\xi^a)}$$
\end{lemma}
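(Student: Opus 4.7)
The strategy is to combine Kodaira vanishing with Reid's orbifold Riemann–Roch, then assemble the resulting term-by-term formula into a generating function. Since $X$ is a del Pezzo surface, $-K_X$ is ample, so Kodaira–Kawamata–Viehweg vanishing for $\mathbb{Q}$-Cartier divisors on surfaces with quotient singularities gives $h^i(-dK_X)=0$ for all $i>0$ and all $d\geq 0$. Consequently $h^0(-dK_X)=\chi(\mathcal{O}_X(-dK_X))$, reducing the problem to computing Euler characteristics.

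Next I would apply Reid's orbifold Riemann–Roch to a $\mathbb{Q}$-Cartier divisor $D$ on $X$:
$$\chi\mleft(\mathcal{O}_X(D)\mright)=\chi(\mathcal{O}_X)+\tfrac{1}{2}D\cdot (D-K_X)+\sum_{\sigma\in\wt{\cB}_X}c_\sigma(D),$$
where $c_\sigma(D)$ is the Reid correction at the cyclic quotient point $\sigma$, depending only on the local character by which $D$ acts at $\sigma$. For $D=-dK_X$ the smooth part is $1+\tfrac{1}{2}d(d+1)K_X^2$ (using $\chi(\mathcal{O}_X)=1$ since a del Pezzo surface is rational). Summing over $d\geq 0$ gives
$$\sum_{d\geq 0}\mleft(1+\tfrac{d(d+1)}{2}K_X^2\mright)t^d=\frac{1}{1-t}+\frac{K_X^2\,t}{(1-t)^3}=\frac{1+(K_X^2-2)t+t^2}{(1-t)^3},$$
which recovers the non-orbifold summand in the lemma.

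For each $\sigma_{r,a}\in\wt{\cB}_X$, the local weight of $K_X$ at $\sigma_{r,a}$ is the sum of the lattice weights of the action, namely $1+a\pmod{r}$, so $-dK_X$ has local character $-d(a+1)\pmod{r}$. Reid's formula expresses $c_{\sigma_{r,a}}(-dK_X)$ as a Dedekind-type sum of the form $\delta_{r,a,(a+1)d}-\delta_{r,a,0}$ after identifying the various Dedekind sum symmetries (in particular $\delta_{r,a,-i}=\delta_{r,a,a+1-i}$, which I would obtain by the substitution $\xi\mapsto\xi^{-1}$ in the defining sum). The resulting coefficient sequence in $d$ is periodic of period $r$, so
$$\sum_{d\geq 0}\mleft(\delta_{r,a,(a+1)d}-\delta_{r,a,0}\mright)t^d=\frac{1}{1-t^r}\sum_{i=1}^r\mleft(\delta_{r,a,(a+1)i}-\delta_{r,a,0}\mright)t^{i-1}=Q_{\sigma_{r,a}}$$
by the standard geometric-series collapse of a periodic sequence. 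Finally, to pass from a sum over $\wt{\cB}_X$ to a sum over $\cB$, I would verify that each $T$-singularity contributes $Q_\sigma=0$ to the Hilbert series; this can be seen directly from the Dedekind sum evaluation using the special form $\tfrac{1}{dn^2}(1,dnc-1)$ of Koll\'ar–Shepherd-Barron, and it is consistent with the $\mathbb{Q}$-Gorenstein deformation invariance of $\Hilb_X$ mentioned in the introduction.

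\textbf{Main obstacle.} The real work is the Dedekind-sum identification in the third step: transcribing Reid's correction, which is naturally phrased in terms of fractional parts and characters of the local group action, into the explicit $\delta_{r,a,i}$ normalisation used here, and checking the periodicity and vanishing identities (especially $\delta_{r,a,a+1}=\delta_{r,a,0}$, which is what makes the $d=0$ coefficient of $Q_\sigma$ vanish as required). The del Pezzo geometry enters only through Kodaira vanishing and $\chi(\mathcal{O}_X)=1$; once those are in hand, the proof is a computation in the Dedekind sum formalism combined with generating-function bookkeeping.
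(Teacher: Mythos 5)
This lemma is cited from Akhtar--Kasprzyk \cite{ak}, Cor.\ 3.5, rather than proved in the paper; the argument there rests on Reid's orbifold Riemann--Roch from \cite{ypg}. Your outline (Kodaira--Kawamata--Viehweg vanishing to identify $h^0$ with $\chi$, orbifold Riemann--Roch to split $\chi(-dK_X)$ into a polynomial part and local Dedekind-sum corrections, generating-function assembly) follows the same route as that source, and your computation of the smooth contribution $\frac{1+(K_X^2-2)t+t^2}{(1-t)^3}$ is correct.

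Two places need repair. First, the indexing on the local correction is off by one. The substitution $\xi\mapsto\xi^{-1}$ gives $\delta_{r,a,j}=\delta_{r,a,a+1-j}$, so with local character $-d(a+1)$ one obtains $c_{\sigma_{r,a}}(-dK_X)=\delta_{r,a,-d(a+1)}-\delta_{r,a,0}=\delta_{r,a,(a+1)(d+1)}-\delta_{r,a,0}$, not $\delta_{r,a,(a+1)d}-\delta_{r,a,0}$; it is the $(d+1)$ that makes the series collapse correctly, since writing $d=kr+i-1$ with $1\leq i\leq r$ yields the coefficient $\delta_{r,a,(a+1)i}-\delta_{r,a,0}$ and hence $\frac{1}{1-t^r}\sum_{i=1}^r(\delta_{r,a,(a+1)i}-\delta_{r,a,0})t^{i-1}$. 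The identity you wrote, $\sum_{d\geq 0}(\delta_{r,a,(a+1)d}-\delta_{r,a,0})t^d=\frac{1}{1-t^r}\sum_{i=1}^r(\delta_{r,a,(a+1)i}-\delta_{r,a,0})t^{i-1}$, is false as stated: the two sides differ by shifting the argument of $\delta$ by $a+1$, which is nontrivial whenever the local index exceeds $1$. Second, passing from $\wt{\cB}_X$ to $\cB$ requires more than $Q_\tau=0$ for $T$-singularities: $\cB$ is the multiset of \emph{residues}, so for each $\sigma\in\wt{\cB}_X$ you must show $Q_\sigma=Q_{\res{\sigma}}$, which is a different statement. That equality follows from the singularity-content decomposition of \cite{ak} together with additivity of $Q$ under crepant blowdown (the paper's Lemma \ref{lem:add}) and $T$-vanishing (Example \ref{ex:T}); appealing to $\mathbb{Q}$-Gorenstein deformation invariance of $\Hilb_X$ explains why such an identity ought to hold, but it does not by itself establish the Dedekind-sum equality you actually need.
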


The sum ranges over the group $\mu_r$ of $r$th roots of unity except $1$. Dedekind sums are intriguing quantities much studied in number theory and combinatorics - for example, \cite{z73} and the survey \cite{br} - that frequently recur in studying the cohomology of toric varieties as in \cite{p93}. Notice that each singularity $\sigma$ in $\cB$ makes two contributions to the Hilbert series: a local contribution in the orbifold correction terms $Q_\sigma$ and a global contribution to the degree. The latter is captured by...

\begin{lemma}[\cite{ak}, Prop. 3.3]\label{lem:degf} The degree of an orbifold del Pezzo surface with basket $\mathcal{B}$ is given by
$$K_X^2=12-n-\sum_{\sigma\in\cB}A_\sigma$$
where $n$ is the Euler number of the smooth locus of $X$, and
$$A_{\sigma_{r,a}}=m+1-\sum_{i=1}^md_i^2b_i + 2\sum_{i=1}^md_id_{i+1}$$
with $[b_1,\dots,b_m]$ the Hirzebruch-Jung continued fraction expansion of $\frac{r}{a+1}$ and $d_i$ the discrepancy at the $i$th component of the exceptional fibre of the minimal resolution of $\sigma_{r,a}$.
\end{lemma}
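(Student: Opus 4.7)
The plan is to compute $K_X^2$ from the minimal resolution $\pi\colon \wt X \to X$, whose target is smooth and rational since $X$ is a klt Fano surface. Each singularity $\sigma = \sigma_{r,a}$ is resolved by a Hirzebruch--Jung chain $E_1^\sigma, \ldots, E_{m_\sigma}^\sigma$ of smooth rational curves with self-intersections $(E_i^\sigma)^2 = -b_i^\sigma$ and adjacent components meeting transversely at one point. Collecting the discrepancies into $D := \sum_{\sigma, i} d_i^\sigma E_i^\sigma$ gives the standard decomposition $\pi^* K_X = K_{\wt X} - D$, whence
\begin{equation*}
K_X^2 = (\pi^* K_X)^2 = K_{\wt X}^2 - 2 K_{\wt X} \cdot D + D^2.
\end{equation*}

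The key observation would be that $\pi^* K_X \cdot E_i^\sigma = 0$, since $E_i^\sigma$ is $\pi$-exceptional, combined with adjunction on $E_i^\sigma \cong \Proj^1$ to get $D \cdot E_i^\sigma = K_{\wt X} \cdot E_i^\sigma = b_i^\sigma - 2$. Pairing with $D$ then forces $K_{\wt X} \cdot D = D^2$, so the cross terms collapse and $K_X^2 = K_{\wt X}^2 - D^2$. Using the chain intersection pattern ($E_i^\sigma \cdot E_{i+1}^\sigma = 1$, otherwise orthogonal), $D^2$ decomposes as a sum of local contributions, with $\sigma$ contributing exactly $-\sum_i (d_i^\sigma)^2 b_i^\sigma + 2 \sum_i d_i^\sigma d_{i+1}^\sigma$.

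To evaluate $K_{\wt X}^2$ I would apply Noether's formula on the smooth rational surface $\wt X$: $K_{\wt X}^2 = 12 \chi(\mathcal{O}_{\wt X}) - e(\wt X) = 12 - e(\wt X)$. Stratifying $\wt X$ as the smooth locus of $X$ (Euler characteristic $n$) together with the exceptional chains (each a tree of $m_\sigma$ copies of $\Proj^1$ contributing $m_\sigma + 1$) yields $e(\wt X) = n + \sum_\sigma (m_\sigma + 1)$. Substituting and packaging the constant and quadratic $\sigma$-contributions into $A_\sigma = m_\sigma + 1 - \sum_i (d_i^\sigma)^2 b_i^\sigma + 2 \sum_i d_i^\sigma d_{i+1}^\sigma$ reproduces the claimed identity.

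The main subtlety I anticipate is that the resolution argument most naturally produces a sum over the extended basket of $X$, whereas the statement is indexed by the residual basket $\cB$. Bridging this should use $\Q$-Gorenstein deformation invariance of $K^2$ applied to the mutation-induced smoothings of $T$-singularities from \S2, together with a direct verification on the classification $\frac{1}{dn^2}(1, dnc-1)$ that a $T$-singularity's $A$-contribution exactly cancels with the induced shift in $n$; the alternative is to argue that the basket appearing in the statement is tacitly the extended one, in which case the resolution argument above is already complete.
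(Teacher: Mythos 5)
The paper does not prove this lemma at all --- it is quoted directly from Akhtar--Kasprzyk \cite{ak}, Prop.~3.3, so there is no argument of the paper's to compare against. Your derivation is a complete, correct, and standard proof: using $\pi^*K_X = K_{\wt X} - D$ with $D=\sum d_iE_i$, the vanishing $\pi^*K_X\cdot E_i=0$ plus adjunction gives $K_{\wt X}\cdot D = D^2$, hence $K_X^2 = K_{\wt X}^2 - D^2 = 12 - e(\wt X) - D^2$ by Noether on the rational surface $\wt X$, and stratifying $e(\wt X) = n + \sum_\sigma(m_\sigma+1)$ together with $D^2 = \sum_\sigma\bigl(-\sum d_i^2 b_i + 2\sum d_id_{i+1}\bigr)$ reassembles into $12 - n - \sum_\sigma A_\sigma$. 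All the intermediate identities check.

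The one real subtlety is precisely the one you flag, and your second (``tacit'') resolution is the right one here. Your argument produces the formula indexed by the actual singularities of $X$ (the extended basket), with $n = e(X\smallsetminus\on{Sing}X)$. The paper's statement literally writes $\cB$, which was defined in the introduction to be the basket of \emph{residues}; with $n$ taken as the Euler number of the smooth locus of $X$ itself, those two conventions disagree by $\sum_\tau A_\tau$ over the $T$-singularities shattered off, which is generally nonzero (Example~\ref{ex:T}). The paper's own use of the formula --- in the proof of Lemma~\ref{lem:add}, where $A_\sigma$ is applied to the genuine cone $\sigma$ in the fan and $m$ counts smooth cones --- confirms the extended-basket reading. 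In the original reference \cite{ak}, the first entry $n$ is the number of primitive $T$-cones in the singularity content rather than $e(X\smallsetminus\on{Sing}X)$, which also reconciles the residual-basket indexing; the paper's restatement is slightly loose on this point. Your alternative bridge via $\Q$-Gorenstein smoothing of the $T$-part also works (the Milnor fibre of $\tfrac{1}{dn^2}(1,dnc-1)$ has Euler number $d = A_\tau$, so the shift in $n$ cancels the dropped $A_\tau$), but it needs that Euler-number computation as an extra input, so the ``tacitly extended basket'' reading is the cleaner route.
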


\begin{example}\label{ex:T} A $T$-singularity $\tau$ of the form $\frac{1}{dn^2}(1,dnc-1)$ has $A_\tau=d$ and $Q_\tau=0$.
\end{example}

It is worth noting that these formulae are explicit enough to actually compute all the possible combinations of singularities that would produce a given power series. However, they allow no structural description of this collection of possible baskets, which turns out to be very geometric.

\section{Shattering and the hyperplane sum}

Much of the present work is geared towards finding and investigating high amounts of structure in the collection of quotient singularities, especially at a fixed local index. We introduce two inverse operations - shattering and the hyperplane sum - that decompose or combine singularities of a fixed local index. The singularities that are indecomposable with respect to these operations contain the cohomological information of all quotient singularities and hence allow the problems considered here to be simplified.

\subsection{Shattering}

Suppose $\sigma$ is a cone and that $v$ is a primitive lattice point on the edge of $\sigma$. The fan obtained by inserting a ray through $v$ and hence dividing $\sigma$ into two new cones is a crepant blowup $Y$ of $X_\sigma$. Recall that a map $f:Y\to X$ of Gorenstein schemes is crepant if $f^*K_X=K_Y$; that is, it has no discrepancy. In the toric situation above this occurs exactly when the ray generator $v$ lies on the edge of $\sigma$. This toric variety $Y$ has two affine pieces corresponding to the two dimensional cones, which are each cyclic quotient singularities. The local indices of these new singularities will be the same as the local index of the original cone since $v$ was primitive. This procedure can be repeated to produce a collection of cones with edges lying along a common hyperplane. We call this operation \textit{shattering}, and the cones obtained from $\sigma$ by this process \textit{shards} of $\sigma$. Below we show $\frac{1}{9}(1,2)$ shattering into two shards corresponding to the singularities $\frac{1}{6}(1,1)$ and $\frac{1}{3}(1,1)$.

\begin{center}
\begin{tikzpicture}[scale=0.6]
\node (o) at (0,0){};
\node (e2) at (0,2){$\bullet$};
\node (e2l) at (0,2.25){};
\node (v2l) at (6.5,-1.05){};
\node (v2) at (6,-1){$\bullet$};
\node (v3) at (4,0){$\bullet$};
\node (v4) at (2,1){$\bullet$};
\node (v5) at (1,0.5){$\bullet$};

\draw (o.center) to (e2l.center);
\draw (o.center) to (v2l.center);
\draw (e2.center) to (v2.center);

\node (o) at (10,0){};
\node (e2) at (10,2){$\bullet$};
\node (e2l) at (10,2.25){};
\node (v2l) at (16.5,-1.05){};
\node (v2) at (16,-1){$\bullet$};
\node (v3) at (14,0){$\bullet$};
\node (v3l) at (14.5,0){};
\node (v4) at (12,1){$\bullet$};
\node (v5) at (11,0.5){$\bullet$};
\node (vmid) at (11,1.5){};

\draw (o.center) to (e2l.center);
\draw (o.center) to (v3l.center);
\draw (e2.center) to (v3.center);
\draw (o.center) to (v2l.center);
\draw (v3.center) to (v2.center);

\node (l1) at (3.5,1.5){$\frac{1}{9}(1,1)$};
\node (l2) at (13.25,1.6){$\frac{1}{6}(1,1)$};
\node (l1) at (16,0){$\frac{1}{3}(1,1)$};
\end{tikzpicture}
\end{center}

Note that the blowup corresponding to the ray through the other lattice point in the interior of the cone edge would not be crepant as this lattice point is not primitive.

\subsection{The hyperplane sum}

\begin{definition}\label{def:hyp} The \textit{hyperplane sum} of two cones $\sigma_1=\on{Cone}(u,v)$ and $\sigma_2=\on{Cone}(v,w)$ with ray generators clockwise ordered such that:
\begin{itemize}
\item the vectors $v-u$ and $w-v$ are parallel,
\item both cones are of the same local index,
\end{itemize}
to be $\sigma_1*\sigma_2:=\on{Cone}(u,w)$, which is the usual Minkowski sum when it is defined.
\end{definition}
Equivalently $\sigma_1$ and $\sigma_2$ have hyperplane sum defined to be $\sigma_1+\sigma_2$ only when this cone also has the same local index as $\sigma_1$ and $\sigma_2$. One can see that this process is inverse to shattering. Continuing the conflation of cones and singularities, we will say that the hyperplane sum $\frac{1}{r}(1,a)*\frac{1}{s}(1,b)$ of two singularities is defined if there are two cones $\sigma_1$ and $\sigma_2$ corresponding to the singularities that are as in Definition \ref{def:hyp}. In this case, the hyperplane sum is defined to be the singularity corresponding to the resulting cone $\sigma_1*\sigma_2$. Thus,
$$\frac{1}{6}(1,1)*\frac{1}{3}(1,1)=\frac{1}{9}(1,2)$$
Notice that this is well-defined as the singularity defined by a cone is unchanged by the action of $\on{GL}_2(\mathbb{Z})$ on $N$ but that it is noncommutative: the hyperplane sum $\sigma_2*\sigma_1$ in general won't even be defined if $\sigma_1*\sigma_2$ is.
\\

Geometrically, the hyperplane sum is a crepant blowdown contracting the torus-invariant curve corresponding to the ray through $v$. From this is follows that...

\begin{lemma}\label{lem:add} (Additivity) Let $\sigma_1*...*\sigma_n=\sigma$. Then
$$Q_{\sigma_1}+...+Q_{\sigma_n}=Q_\sigma\text{ and }A_{\sigma_1}+...+A_{\sigma_n}=A_\sigma$$
\end{lemma}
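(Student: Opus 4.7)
The plan is to leverage the geometric interpretation of the hyperplane sum as a toric crepant contraction, given just before the lemma. By iteration, it suffices to treat the case $n = 2$: namely, $\sigma = \sigma_1 * \sigma_2$ arising from a single torus-invariant crepant blowup.

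To set up a global framework in which to apply Lemmas \ref{lem:hilb} and \ref{lem:degf}, first realize $\sigma$ as a cone in the fan of some orbifold toric del Pezzo surface $X$, and let $Y$ be the surface whose fan is obtained from that of $X$ by inserting the ray through the shared primitive generator of $\sigma_1$ and $\sigma_2$. The induced toric morphism $\pi : Y \to X$ is birational and crepant, the latter because the inserted ray generator lies on the edge of $\sigma$. From crepancy and the projection formula one extracts two invariance statements: first, $\Hilb_X(t) = \Hilb_Y(t)$, since $K_Y = \pi^* K_X$ and $\pi_* \mathcal{O}_Y = \mathcal{O}_X$ force $h^0(Y, -dK_Y) = h^0(X, -dK_X)$ for every $d \geq 0$; and second, $K_X^2 = K_Y^2$, using $(\pi^* K_X)^2 = K_X^2$ for birational $\pi$.

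Substituting these into Lemma \ref{lem:hilb}, the smooth Riemann--Roch term $\frac{1 + (K^2 - 2)t + t^2}{(1-t)^3}$ agrees on both sides by $K^2$-invariance, and the orbifold corrections from cones other than $\sigma, \sigma_1, \sigma_2$ are identical. Equating Hilbert series then delivers $Q_\sigma = Q_{\sigma_1} + Q_{\sigma_2}$. Analogously, substituting into Lemma \ref{lem:degf} and cancelling gives $n_X + A_\sigma = n_Y + A_{\sigma_1} + A_{\sigma_2}$, where $n_X, n_Y$ are the Euler characteristics of the smooth loci. A direct topological computation shows $n_X = n_Y$ whenever both shards are singular (automatic when the common local index exceeds one), because in passing from $X$ to $Y$ one simultaneously adds one torus-fixed point and one singular point, leaving $n$ unchanged. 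This gives $A_\sigma = A_{\sigma_1} + A_{\sigma_2}$.

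The main obstacle is the Euler-characteristic bookkeeping in the degenerate local-index-one regime, where a shard may accidentally be smooth; here one must treat the trivial cone as the T-singularity $\frac{1}{1}(1,0)$ (with $d = 1$ and $A = 1$) so that the balance between the change in Euler characteristic of the smooth locus and the change in the basket sum is preserved. A secondary technical point is the existence of a global $X$ containing $\sigma$ as a cone of its fan, but this is a routine completion of a local fan to a projective toric surface.
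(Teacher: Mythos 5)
Your proof is correct and follows essentially the same route as the paper's: embed $\sigma$ as a cone of a toric del Pezzo surface, replace it by the shards, and use crepancy of the resulting toric blowdown to equate Hilbert series and degrees. The one place you go further than the paper is in flagging the Euler-characteristic bookkeeping when a shard is smooth (local index~1); the paper silently assumes the number of smooth cones is unchanged between $X$ and $Y$, which is automatic for $\ell>1$ but requires your adjustment otherwise.
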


\begin{cor}\label{cor:can} Let $\sigma_1*...*\sigma_n=\tau$, a $T$-singularity. Then
$$Q_{\sigma_1}+...+Q_{\sigma_n}=0\text{ and }A_{\sigma_1}+...+A_{\sigma_n}=A_\tau=\frac{\on{width}(\tau)}{\ell_\tau}$$
\end{cor}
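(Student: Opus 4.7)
The plan is to deduce the corollary by combining the additivity statement of Lemma \ref{lem:add} with the explicit computation for $T$-singularities recorded in Example \ref{ex:T}, then verifying the numerical identity $A_\tau = \on{width}(\tau)/\ell_\tau$ directly from the classification of $T$-singularities.

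First I would invoke Lemma \ref{lem:add}: since $\sigma_1 * \cdots * \sigma_n = \tau$, additivity immediately gives $Q_{\sigma_1} + \cdots + Q_{\sigma_n} = Q_\tau$ and $A_{\sigma_1} + \cdots + A_{\sigma_n} = A_\tau$. So the corollary reduces to the single-cone assertions $Q_\tau = 0$ and $A_\tau = \on{width}(\tau)/\ell_\tau$ for $\tau$ a $T$-singularity.

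Next, Example \ref{ex:T} already records that for a $T$-singularity $\tau$ of the form $\frac{1}{dn^2}(1, dnc-1)$ one has $Q_\tau = 0$ and $A_\tau = d$, so I would simply cite this. The remaining content is the identity $d = \on{width}(\tau)/\ell_\tau$. This is immediate from the description of $T$-singularity cones given right after the Koll\'ar--Shepherd-Barron classification in the excerpt: the local index is $\ell_\tau = n$ and the number of lattice line segments along the edge is $\on{width}(\tau) = dn$, so $\on{width}(\tau)/\ell_\tau = dn/n = d = A_\tau$.

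There is no real obstacle here; the corollary is a bookkeeping consequence of results already established. The only mildly delicate point is ensuring the width and local index identifications are consistent with the conventions fixed earlier (lattice height of the cone equals $\ell_\tau$, number of lattice line segments along the edge equals $\on{width}(\tau)$), both of which are read off directly from the picture of the $T$-singularity cone displayed just before Example \ref{ex:T}.
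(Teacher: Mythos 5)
Your proposal is correct and matches the paper's own reasoning, which disposes of the corollary in one line: ``The corollaries follow from the lemma combined with Example \ref{ex:T}.'' You have simply spelled out the small arithmetic check that $\on{width}(\tau)/\ell_\tau = dn/n = d = A_\tau$, which the paper leaves implicit.
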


\begin{proof}[Proof of Lemma \ref{lem:add}] Let $X$ be a toric del Pezzo surface whose fan contains the cone $\sigma$ and with all other cones smooth. Let $Y$ be the toric variety associated to the fan that agrees with $X$ everywhere outside of $\sigma$ and there has $\sigma$ replaced by $\sigma_1,\dots,\sigma_n$. The natural map $f:Y\to X$ is given by $n-1$ crepant blowups in torus-fixed points according to the cones $\sigma_1,\dots,\sigma_n$. Since $f$ is surjective, $h^0(-dK_X)=h^0(-d(f^*K_X))=h^0(-dK_Y)$. Thus the Hilbert series of $X$ agrees with that of $Y$.
\\

Note that $K_X^2=f^*K_X^2=K_Y^2$ and so the initial term of $\Hilb_X(t)$ and $\Hilb_Y(t)$ agree. Hence the orbifold correction terms $Q_\varsigma$ satisfy
$$Q_\sigma=\sum_{\varsigma\in\cB_X}Q_\varsigma=\sum_{\varsigma\in\cB_Y}Q_\varsigma=\sum_{i=1}^nQ_{\sigma_i}$$
as desired. Suppose that there are $m$ smooth cones in the fans of $X$ and $Y$. The degree formula above gives that
$$12-m-A_\sigma=K_X^2=K_Y^2=12-m-\sum_{i=1}^nA_{\sigma_i}$$
and so the degree correction terms also agree. The corollaries follow from the lemma combined with Example \ref{ex:T}.
\end{proof}

In the sense of Corollary \ref{cor:can} $T$-singularities are negligible from the perspective of orbifold contributions to Hilbert series, though they still contribute to the degree. Say that a residual singularity $\sigma$ is \textit{hyperplane inverse} (or just `inverse' if the context is clear) to another residual singularity $\sigma'$ if the hyperplane sum $\sigma*\sigma'$ is defined and equal to an elementary $T$-singularity. By explicit calculation one finds that:

\begin{lemma}\label{lem:inv} The hyperplane inverse of $\sigma:\frac{1}{kn}(1,kc-1)$ is
$$\sigma^{-1}:\frac{1}{n(n-k)}(1,(n-k)(n-c)-1).$$
\end{lemma}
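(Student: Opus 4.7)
The plan is to place $\sigma$ concretely inside $N = \Z^2$, extend its edge so as to realise an elementary $T$-singularity of local index $n$, and identify $\sigma^{-1}$ as the complementary subcone of this extension.

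I begin by putting $\sigma$ in the standard form $\on{Cone}\bigl((0,1),\,(kn,\,1-kc)\bigr)$, whose edge runs from $(0,1)$ to $(kn, 1-kc)$ with primitive direction $(n,-c)$ (primitive since $\mathrm{gcd}(n,c)=1$). Prolonging this edge by $n-k$ further primitive lattice steps reaches $(n^2, 1-nc)$, and the enlarged cone $\on{Cone}\bigl((0,1),\,(n^2, 1-nc)\bigr) = \sigma_{n^2,\, nc-1} = \frac{1}{n^2}(1,\, nc-1)$ is visibly the elementary $T$-singularity of local index $n$ and slope $c$. Consequently the hyperplane inverse of $\sigma$ is necessarily realised by
\[
\sigma^{-1} = \on{Cone}\bigl((kn,\,1-kc),\,(n^2,\,1-nc)\bigr),
\]
and the defining property of $\sigma \ast \sigma^{-1}$ being an elementary $T$-singularity holds by construction.

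Next I compute the abstract singularity type of $\sigma^{-1}$. Its multiplicity follows from the determinant of the ray generators:
\[
\bigl|kn(1-nc) - (1-kc)n^2\bigr| = n(n-k),
\]
matching the $r'$ of the claimed form. To pin down the second parameter I use $\mathrm{gcd}(kn, kc-1)=1$ to produce $A \in \GL_2(\Z)$ sending $(kn, 1-kc)^T \mapsto e_2$, then track the image of $(n^2, 1-nc)^T$ to read off $a'$. Residuality of the resulting singularity follows from $\mathrm{gcd}(n, n-c) = \mathrm{gcd}(n, c) = 1$ together with $(n-k)(n-c) \equiv kc \pmod{n}$, which gives $\mathrm{gcd}\bigl(n(n-k),\, (n-k)(n-c)-1\bigr) = 1$.

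The main obstacle I foresee is a bookkeeping subtlety coming from the intrinsic $a \leftrightarrow \overline{a}$ ambiguity in cyclic quotient presentations: a direct change-of-basis will generically produce the modular inverse $\overline{(n-k)(n-c)-1}$ modulo $n(n-k)$ rather than $(n-k)(n-c)-1$ itself. I would close the gap by verifying the congruence
\[
\bigl((n-k)(n-c)-1\bigr) \cdot \overline{a'} \equiv 1 \pmod{n(n-k)},
\]
which on reduction modulo $n$ becomes the defining relation for the direct $a'$ and uses $\mathrm{gcd}(n, kc-1)=1$. Invoking the identification $\tfrac{1}{r}(1,a) \cong \tfrac{1}{r}(1,\bar a)$ then yields the claimed expression for $\sigma^{-1}$.
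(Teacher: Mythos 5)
Your proof is correct and is precisely the ``explicit calculation'' that the paper alludes to but does not write out. Placing $\sigma$ as $\on{Cone}((0,1),(kn,1-kc))$, prolonging the edge in the primitive direction $(n,-c)$ to $(n^2,1-nc)$ to obtain the elementary $T$-cone $\sigma_{n^2,nc-1}$, and reading off $\sigma^{-1}$ as the complementary subcone is exactly the right setup; the determinant computation correctly gives $r'=n(n-k)$.

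The one subtlety you flag is genuine and you resolve it correctly. Carrying the change-of-basis through (write $A=\begin{pmatrix}1-kc & -kn\\ s & t\end{pmatrix}$ with $(1-kc)t+kns=1$, so $A$ sends $(kn,1-kc)\mapsto e_2$ and $(n^2,1-nc)\mapsto(n(n-k),\,sn^2+t(1-nc))$) gives $a'=-(sn^2+t(1-nc))=(n-k)(tc-sn)-1$, which is in general the modular inverse of the claimed $a''=(n-k)(n-c)-1$ rather than equal to it. Your proposed congruence $a'a''\equiv1\pmod{n(n-k)}$ does hold: it reduces to showing $(n-k)\bigl[(n-k)(tc-sn)(n-c)-(tc-sn)-(n-c)\bigr]\equiv0\pmod{n(n-k)}$, i.e.\ $c(kct-t+1)\equiv0\pmod n$, and $(1-kc)t\equiv1\pmod n$ forces $kct-t+1\equiv0\pmod n$. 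Your residuality check via $\gcd(n,(n-k)(n-c)-1)=\gcd(n,kc-1)=1$ and $\gcd(n-k,(n-k)(n-c)-1)=1$ is also correct, and yields local index $n$ and width $n-k$ for $\sigma^{-1}$ as required. In short: the argument goes through and supplies exactly the verification the paper omits.
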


Notice that $(\sigma^{-1})^{-1}$ is isomorphic to $\sigma$. Let the `dual singularity' $\overline{\frac{1}{r}(1,a)}:=\frac{1}{r}(1,\ol{a})$ with $a\ol{a}\equiv 1\on{mod}{r}$ and call a cyclic quotient singularity $\sigma$ \textit{self-dual} if $\ol{\sigma}=\sigma$. The effect of dualising a singularity reverses its gluing behaviour in that two singularities $\sigma$ and $\sigma'$ are hyperplane summable if and only if $\ol{\sigma}'$ and $\ol{\sigma}$ are. This follows as the element of $\on{GL}_2(\Z)$ taking $\sigma$ to $\ol{\sigma}$ is orientation-reversing. The hyperplane sum is thus not actually defined on isomorphism classes of quotient singularities, but requires a choice of root of unity in addition.

\subsection{Indecomposable singularities and maximal shatterings}

Viewed on the level of cones, one can split a given residual singularity into `indecomposable' hyperplane summands: those that cannot be shattered further. These are exactly the cones containing no primitive lattice points along the interior of their edge, which are not necessarily of width $1$. 

\begin{definition} A cyclic quotient singularity $\sigma$ is \textit{indecomposable} if a (or every) cone corresponding to it has no primitive lattice points lying on the interior of its edge.
\end{definition}

Note that an indecomposable singularity is in particular a residual singularity of local index at least $3$. Define the \textit{residual quiver} at local index $\ell$ to be the quiver with a vertex for every indecomposable singularity (distinguishing dual singularities) of local index $\ell$ and with an arrow drawn between $\sigma$ and $\sigma'$ if and only if $\sigma$ and $\sigma'$ are hyperplane summable. The following quiver is the result of applying the construction in the case of local index $5$.
\\

\hspace{2in}
\begin{tikzpicture}[scale=1]
\node (a) at (-5,3) {$\frac{1}{5}(1,1)$};
\node (b) at (-2,3) {$\frac{1}{5}(1,2)$};
\node (c) at (-2,1) {$\frac{1}{10}(1,1)$};
\node (e) at (-5,1) {$\frac{1}{5}(1,3)$};

\draw[->] (a)--(b);
\draw[->] (b)--(c);
\draw[->] (e)--(a);
\draw[->] (c)--(e);
\end{tikzpicture}

\begin{lemma} For given $\ell$ the residual quiver at local index $\ell$ is a cycle of length $\phi(\ell)$, where $\phi$ is Euler's totient function. Moreover, there is exactly one indecomposable singularity of every slope $c\in(\mathbb{Z}/\ell\mathbb{Z})^\times$.
\end{lemma}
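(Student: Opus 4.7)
The plan is to parametrize indecomposable singularities of local index $\ell$ by the position of one endpoint of their edge on a fixed line, and then observe that both the slope map and the hyperplane-summability relation become transparent in this parametrization. Place any cone of local index $\ell$ so that its edge lies on the line $L = \{(t,\ell) : t \in \R\}$. The lattice points on $L$ are $(t,\ell)$ for $t \in \Z$, and such a point is primitive if and only if $\on{hcf}(t,\ell)=1$. An oriented edge of an indecomposable cone is therefore of the form joining $(t_1,\ell)$ to $(t_2,\ell)$ with $t_1<t_2$, both $t_i$ coprime to $\ell$, and no integer strictly between them coprime to $\ell$; equivalently, $t_2$ is the next integer greater than $t_1$ coprime to $\ell$. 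The shears $\begin{pmatrix} 1 & b \\ 0 & 1 \end{pmatrix}\in\on{SL}_2(\Z)$ are the only linear elements of $\on{SL}_2(\Z)$ preserving $L$ as an oriented line, and they act on $L$ by translation by $b\ell$, so up to $\on{SL}_2(\Z)$-equivalence such an oriented cone is determined by the residue of $t_1$ in $(\Z/\ell\Z)^\times$. Since duals are distinguished in the quiver we use only orientation-preserving equivalences, giving exactly $\phi(\ell)$ vertices.

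To identify the slope associated to a given $t_1$, choose integers $c,d$ with $ct_1+d\ell=1$; the matrix $A=\begin{pmatrix} \ell & -t_1 \\ c & d \end{pmatrix}\in\on{SL}_2(\Z)$ sends $(t_1,\ell)$ to $e_2$ and $(t_2,\ell)$ to $(k\ell,\,1+ck)$, where $k=t_2-t_1$ is the width of the cone. Comparing with the standard residual form $\frac{1}{k\ell}(1,kc'-1)$ of slope $c'$, whose second ray generator is $(k\ell, 1-kc')$, yields $c'\equiv -c\equiv -t_1^{-1}\pmod{\ell}$. Since $t_1\mapsto -t_1^{-1}$ is a bijection on $(\Z/\ell\Z)^\times$, each residue class appears as the slope of exactly one indecomposable singularity of local index $\ell$, which is the second claim.

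Finally, two indecomposable cones placed on $L$ are hyperplane summable in the order $\sigma*\sigma'$ exactly when the right-hand endpoint of the edge of $\sigma$ coincides with the left-hand endpoint of the edge of $\sigma'$, directly from Definition \ref{def:hyp}. Thus the unique outgoing arrow from the vertex parametrized by $t_1$ points to the vertex parametrized by the next coprime residue after $t_1$; iterating this successor operation cyclically visits each element of $(\Z/\ell\Z)^\times$ exactly once before returning, so the residual quiver is a single directed cycle of length $\phi(\ell)$. The main technical step is the slope calculation: tracking the $\on{SL}_2(\Z)$-transformation carefully and matching it to the standard residual form of Lemma \ref{lem:inv}'s hypothesis. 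The remainder of the argument then reduces to elementary observations about the successor function on coprime residues modulo $\ell$.
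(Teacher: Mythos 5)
Your proof is correct, and it takes a genuinely different route from the paper. The paper's argument runs slope-first: for each $c\in(\Z/\ell\Z)^\times$, it locates the unique indecomposable of slope $c$ by incrementing the width until $\on{hcf}(\ell,wc-1)=1$, then observes that every vertex has exactly two neighbours so the quiver is a disjoint union of cycles, and finally proves connectedness by taking a gluing cone between any two indecomposables and maximally shattering it to produce a path. You instead parametrize vertices by the residue $t_1\in(\Z/\ell\Z)^\times$ of the left endpoint of the edge placed on the line $\{y=\ell\}$, so that the hyperplane-summability relation becomes literally the successor function on coprime residues; this makes the single-cycle structure immediate with no separate connectedness step, which is the main thing your approach buys. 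Your explicit $\SL_2(\Z)$ computation $c'\equiv -t_1^{-1}\pmod\ell$ then delivers the slope bijection directly; I checked it against the $\ell=5$ quiver in the paper (where slopes $2\to3\to1\to4\to2$ correspond to $t_1=2\to3\to4\to1\to2$) and it agrees. One small point worth making explicit if this were written up: the claim that $A\sigma_{t_1}$ and the normal form $\sigma_{k\ell,kc'-1}$ are the same vertex means $\SL_2(\Z)$-equivalent sharing the ray $e_2$ in the same cyclic order, so the only ambiguity is a shear $\begin{pmatrix}1&0\\m&1\end{pmatrix}$, which pins $c'$ down modulo $\ell$ exactly as you use; this is what makes the slope well-defined on $\SL_2(\Z)$-classes (and hence genuinely distinguishes duals).
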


Before proving this lemma, observe that for any two singularities $\sigma_1$ and $\sigma_2$, though they may not be hyperplane summable, there will always be a unique singularity $\sigma_g$ of smallest width such that $\sigma_1*\sigma_g$ and $\sigma_g*\sigma_2$ are well-defined. This is because, picking a cone for $\sigma_1$, one can always move a cone representing $\sigma_2$ to have edge lying on the hyperplane given by the cone edge of $\sigma_1$ by the action of $\on{GL}_2(\Z)$. This cone $\sigma_g$ is called the \textit{gluing cone} of $\sigma_1$ and $\sigma_2$.

\begin{proof} If $\on{hcf}(\ell,c-1)=1$ then the singularity of slope $c$ and width $1$ is indecomposable. Suppose $\on{hcf}(\ell,c-1)\not=1$ but $\on{hcf}(\ell,2c-1)=1$. Then the residual singularity with slope $c$ and width $2$ contains only non-primitive interior edge lattice points and so is also indecomposable. Continuing, if $\on{hcf}(\ell,c-1)\not=1$ and $\on{hcf}(\ell,2c-1)\not=1$ but $\on{hcf}(\ell,3c-1)=1$ then the residual singularity with slope $c$ and width $3$ will be indecomposable. For a given slope $c$ one can pick $w=2c^{-1}\in\mathbb{Z}/\ell\mathbb{Z}$ as a width for which $\on{hcf}(\ell,wc-1)=1$ since $wc\equiv 1\on{mod}{\ell}$. Hence this process exhausts all $c\in(\mathbb{Z}/\ell\mathbb{Z})^\times$. Clearly no other values of $c$ are possible and each level with fixed width excludes all previous levels, giving that the number of vertices of the residual quiver is equal to the number of choices of $c$, which is $\phi(\ell)$. There is a unique indecomposable singularity that can be glued onto a given singularity by extending its cone edge. It follows that every vertex is linked to exactly two others. This says that the quiver is a collection of cycles. However given any two indecompables $\sigma_1$ and $\sigma_2$ there is a gluing cone joining them. Maximally shattering this gluing cone gives a path from $\sigma_1$ to $\sigma_2$ in the residual quiver; therefore the quiver is connected and so forms a single cycle.
\end{proof}

\subsection{Conjecture \#1}

In this section we will state the first conjecture that the main results of this paper will depend upon. We will fix notation:
\begin{itemize}
\item $\mathcal{R}_\ell$ is the set of indecomposable singularities of local index $\ell$
\item $\on{Res}(\ell)$ is the set of residual singularities of local index $\ell$
\item $\sigma$ will denote a residual singularity of local index $\ell$
\end{itemize}
From the general theory of Hilbert series for Gorenstein schemes - see, for example, \cite{bass}, \cite{buck}, or \cite{mnz} - applied to this particular situation one can write
\begin{equation} \tag{$\star$}
Q_\sigma=\frac{\delta_0+\delta_1t+...+\delta_{\ell-1}t^{\ell-1}}{\ell(1-t^\ell)}
\end{equation}
for $\delta_i\in\Z$. Define the $\delta$\textit{-vector} of $\sigma$ to be $\delta(\sigma):=(\delta_0,\delta_1,...,\delta_{\ell-1})$. From Lemma \ref{lem:hilb}
$$\delta_i=\ell(\delta_{r,a,(a+1)(i+1)}-\delta_{r,a,0})$$
The $\delta$-vector has the properties:
\begin{itemize}
\item $\delta(\sigma)$ is palindromic
\item $\delta_0=\delta_{\ell-1}=0$.
\end{itemize}
Because of the second property, we will abbreviate the $\delta$-vector to omit the first and last terms. For example, the $\delta$-vector of $\frac{1}{5}(1,1)$ with
$$Q_{\frac{1}{5}(1,1)}=\frac{t^3-2t^2+t}{5(1-t^5)}$$
is $(1,-2,1)$. We now prove these two properties.

\begin{proof} Suppose $\sigma=\frac{1}{r}(1,a)$ with local index $\ell$. We start by proving that $Q_\sigma$ can be written in the form $(\star)$. It suffices that the numerator in the expression of $Q_\sigma$ in Lemma \ref{lem:hilb} with denominator $1-t^r$ is divisible by $1+t^\ell+t^{2\ell}+\dots+t^{r-\ell}$, or equivalently that
$$\delta_{r,a,(a+1)(\ell+i)}=\delta_{r,a,(a+1)i}$$
This follows immediately from noting that $(a+1)\ell\equiv0\on{mod}{r}$ and that the final argument of such a Dedekind sum is well-defined modulo $r$. To prove the first property - that $\delta_{\ell-1-i}=\delta_i$ - observe that it suffices that the first $\ell$ terms of the numerator of $Q_\sigma$ in the expression of Lemma \ref{lem:hilb} are palindromic, or that
$$\delta_{r,a,(a+1)(\ell-i)}=\delta_{r,a,(a+1)(i+1)}$$
Computing directly using the fact that $(a+1)\ell\equiv0\on{mod}{r}$,
$$\delta_{r,a,(a+1)(\ell-i)}=\sum\frac{\eps^{(a+1)(\ell-i)}}{(1-\eps)(1-\eps^a)}=\sum\frac{\eps^{-(a+1)i}}{(1-\eps)(1-\eps^a)}$$
Multiplying by $\eps^{-(a+1)}$ in numerator and denominator yields
$$\sum\frac{\eps^{-(a+1)(i+1)}}{\eps^{-(a+1)}(1-\eps)(1-\eps^a)}=\sum\frac{\eps^{-(a+1)(i+1)}}{(\eps^{-1}-1)(\eps^{-a}-1)}=\sum\frac{\eps^{(a+1)(i+1)}}{(1-\eps)(1-\eps^a)}=\delta_{r,a,(a+1)(i+1)}$$
using the bijection $\eps\mapsto\eps^{-1}$ on the $r$th roots of unity. The second property now follows from the equality
$$\delta_{r,a,(a+1)\ell}=\delta_{r,a,0}$$
since $(a+1)\ell\equiv0\on{mod}{r}$ and so the $(\ell-1)$th coefficient $\delta_{\ell-1}=\delta_{r,a,(a+1)\ell}-\delta_{r,a,0}=0$.
\end{proof}

\begin{lemma}\label{lem:tvan} An isolated cyclic quotient singularity $\tau$ is a $T$-singularity if and only if $Q_\tau=0$.
\end{lemma}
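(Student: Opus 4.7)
My plan is to handle the two directions separately. The forward implication (every $T$-singularity $\tau$ has $Q_\tau = 0$) is already recorded in Example \ref{ex:T}; it follows by substituting $\tau = \frac{1}{dn^2}(1,dnc-1)$ into Lemma \ref{lem:hilb} and simplifying the resulting Dedekind sums, using $n \cdot dnc \equiv 0 \pmod{dn^2}$ together with $\on{hcf}(n,c) = 1$. So the real content is the converse.

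For the converse I would reduce to the residual case via the mutation / shattering description of \S4. Any isolated cyclic quotient singularity decomposes as a hyperplane sum
$$\tau = T_1 * T_2 * \cdots * T_q * \res{\tau},$$
where each $T_i$ is an elementary $T$-singularity and $\res{\tau}$ is the residue of $\tau$, a residual singularity of width strictly less than the local index $\ell$. Additivity (Lemma \ref{lem:add}) combined with the forward direction gives $Q_\tau = Q_{\res{\tau}}$; since $\tau$ is itself a $T$-singularity precisely when $\res{\tau}$ is the empty cone, the lemma reduces to the following claim: \emph{if $\sigma$ is a nonempty residual singularity of local index $\ell$ and width $0 < k < \ell$, then $Q_\sigma \ne 0$}.

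To establish this reduced claim I would show the numerator polynomial $p_\sigma(t) = \sum_{i=0}^{\ell-1}\delta_i t^i$ of $Q_\sigma = p_\sigma(t)/(\ell(1-t^\ell))$ satisfies $p_\sigma(1) \ne 0$. Writing $r = k\ell$ and $a+1 = kc$ with $\on{hcf}(\ell,c) = 1$, one checks that the set $\{(a+1)(i+1) \bmod r : 0 \le i \le \ell - 1\}$ is precisely the subgroup $\{0, k, 2k, \ldots, (\ell-1)k\} \subset \Z/r$. Substituting into the defining expression for the Dedekind sums and applying the geometric-series identity $\sum_{j=0}^{\ell-1}\xi^{jk} = \ell \cdot \mathbf{1}[\xi \in \mu_k]$ collapses the double sum to
$$p_\sigma(1) = -\frac{\ell}{k}\sum_{\xi \in \mu_r \setminus \mu_k} \frac{1}{(1-\xi)(1-\xi^a)}.$$
My plan to finish is to pair $\xi$ with $\xi^{-1}$: the involution $\xi \mapsto \xi^{-1}$ rescales the summand by $\xi^{a+1}$, so conjugate pairs combine into a real expression whose sign is controllable by $1 - \cos$ estimates.

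The main obstacle is precisely this final non-vanishing step, since \emph{a priori} the summands could cancel. If the pairing argument does not deliver a definite sign in full generality, my fallback is to isolate a single coefficient --- the natural candidate being $\delta_1 = \ell(\delta_{r,a,2kc} - \delta_{r,a,0})$ --- and establish its non-vanishing by Dedekind reciprocity, or to invoke the cycle structure of the residual quiver from \S4.3 together with additivity so that the claim need only be verified on a single representative for each cycle of length $\varphi(\ell)$ of indecomposable residuals.
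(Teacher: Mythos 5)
Your reduction to the residual case via Lemma \ref{lem:add} matches the paper exactly, and your formula $p_\sigma(1) = -\tfrac{\ell}{k}\sum_{\xi\in\mu_r\setminus\mu_k}\tfrac{1}{(1-\xi)(1-\xi^a)}$ is a correct computation. But the gap is exactly where you flag it, and it is not a small one: the pairing argument cannot supply a definite sign. Writing $\xi=e^{i\theta}$ and combining the conjugate pair gives $\tfrac{-\cos((a+1)\theta/2)}{2\sin(\theta/2)\sin(a\theta/2)}$, and neither numerator nor denominator has a uniform sign across the summation range, so $1-\cos$ estimates do not close the step. Worse, any sign argument that does not invoke the residual hypothesis $0<k<\ell$ is doomed in principle: the identical expression with $k$ a multiple of $\ell$ is the numerator for a $T$-singularity and vanishes identically, so whatever makes the sum nonzero has to engage with the constraint $0<k<\ell$ specifically, which the proposed estimate does not.

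The paper's proof closes the gap by a quite different route. It applies Lemma \ref{lem:hilb} to the weighted projective plane $X=\pr(1,a,r)$, whose basket is $\left\{\tfrac{1}{r}(1,a),\,\tfrac{1}{a}(1,r)\right\}$, reads off the coefficient of $t$ in $\Hilb_X(t)$ as $1+K_X^2+\tfrac{\delta_1^1}{\ell_1}+\tfrac{\delta_1^2}{\ell_2}$, and uses that $h^0(-K_X)\in\Z$ together with the explicit degree $K_X^2=(1+a+r)^2/ar$. A divisibility argument --- if the fractional part vanished one would need $r\mid(1+a)^2$, which forces $\ell\mid k$ and contradicts residuality --- then shows both fractional contributions are nonzero, so $\delta_1\neq0$. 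This actually proves more than $p_\sigma(1)\neq0$: it pins down a specific nonzero coefficient of the $\delta$-vector, a fact the paper reuses later. Your fallback candidate $\delta_1$ is therefore precisely the right target, but the operative tool is integrality of a Hilbert function of an auxiliary surface, not Dedekind reciprocity; and the residual-quiver fallback does not help, since additivity alone cannot propagate non-vanishing from one indecomposable to another.
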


\begin{proof} The only if implication follows from Example \ref{ex:T}. It suffices to show that every residual singularity makes a nonzero contribution to the Hilbert series. This follows using the shattering in \cite{ak} decomposing a cone into $T$-cones and a single residual cone. For the forward implication, let $\frac{1}{r}(1,a)$ be a residual singularity and consider the weighted projective plane $X=\pr(1,a,r)$. This has three affine pieces isomorphic to $\mathbb{A}^2,\frac{1}{r}(1,a)$ and $\frac{1}{a}(1,r)$ and thus its Hilbert series is
$$\on{Hilb}_X(t)=\frac{1+(K_X^2-2)t+t^2}{(1-t)^3}+Q_{\sigma_1}+Q_{\sigma_2}$$
where $\sigma_1:\frac{1}{r}(1,a)$ and $\sigma_2:\frac{1}{a}(1,r)$. Let these have local indices $\ell_1$ and $\ell_2$ - which are coprime - and write
$$\delta(\sigma_i)=(\delta_j^i)_{j=1,\dots,\ell_i-2}$$
One can compute the $t$-coefficient of the Hilbert series to be
\begin{equation} \tag{$\dagger$}
h^0(-K_X)=1+K_X^2+\frac{\delta_1^1}{\ell_1}+\frac{\delta_1^2}{\ell_2}
\end{equation}
As the dimension of a vector space, this must be an integer. Recall that the degree of $X$ is $(1+a+r)^2/ar$, which has the same fractional part as
\begin{equation}\tag{$\ast$}
\frac{1+a}{r}+\frac{1+r}{a}+\frac{1+a+r}{ar}
\end{equation}
Consider the the residues of $(*)$ mod $\Z\cdot\frac{1}{\ell_1}$ and $\Z\cdot\frac{1}{\ell_2}$. Suppose the residue of $(*)$ mod $\Z\cdot\frac{1}{\ell_2}$ is zero. Then
$$\frac{1+a}{r}+\frac{1+a+r}{ar}\equiv0\on{mod}\Z\cdot\frac{1}{\ell_2}$$
as $(1+r)/a$ has denominator $\ell_2$ in lowest terms. Combining fractions, this requires in particular that $r$ divides $(1+a)^2$ as $r$ is coprime to $\ell_2$. Let $k$ be the width of $\sigma_1$ so that $r=k\ell_1$ and $1+a=kc$ for some $c$ coprime to $\ell_1$. For $k\ell_1$ to divide $k^2c^2$ one must have that $\ell_1$ divides $k$, but this is contrary to the definition of residual singularity. It follows that both residues are nonzero and so, for $(\dagger)$ to be an integer, $\delta_1^1$ and $\delta_1^2$ must be nonzero.
\end{proof}

This proof actually shows that the first coefficient of the $\delta$-vector of a residual singularity is nonzero. Notice that this argument would fail for a $T$-singularity where, by definition, the local index $\ell_1$ divides the width $k$.
\\

Given a semigroup $S$ and a set $R$, let the formal semigroup consisting of $S$-linear combinations of elements of $R$ be denoted by $S\langle R\rangle$. If $S=\Z$ this is just the formal lattice generated by $R$. Define the $\delta$\textit{-lattice} for local index $\ell$ to be the sublattice
$$\Delta(\ell):=\mathbb{Z}\langle\delta(\sigma):\sigma\in\on{Res}(\ell)\rangle\subset\Z^{\ell-2}$$
generated by all the $\delta$-vectors of residual singularities (equivalently, indecomposable singularities) of local index $\ell$.
\\

Given a list of residuals $\mathcal{T}=(\sigma_1,\dots,\sigma_n)$ there is a unique expression $(\sigma^1_1,\dots,\sigma_1^{m_1},\dots,\sigma_n^1,\dots,\sigma_n^{m_n})$ where $\sigma_i^j\in\mathcal{R}_\ell$ and $\sigma_i^1*\dots*\sigma_i^{m_i}=\sigma_i$. This tuple is the \textit{maximal shattering} of $\mathcal{T}$ denoted by $\rho(\mathcal{T})$. Combinatorially the cones corresponding to each $\sigma_i$ have been shattered as much as possible to decompose into a hyperplane sum in terms of $\mathcal{R}_\ell$. Note that by definition of $\mathcal{R}_\ell$ this is maximal exactly in this sense. Of course
$$\sum_{i}{Q_{\sigma_i}}=\sum_{i,j}{Q_{\sigma_i^j}}\text{ and }\sum_{i}{A_{\sigma_i}}=\sum_{i,j}{A_{\sigma_i^j}}$$
by the additivity in Lemma \ref{lem:add}. From the characterisation in terms of non-primitive lattice points, the maximal shattering of any singularity is unique. After being linearly extended $\rho$ defines a surjective monoid homomorphism $\mathbb{N}[\on{Res}(\ell)]\to\mathbb{N}[\mathcal{R}_\ell]$ which is left-inverse to the inclusion $\mathbb{N}[\mathcal{R}_\ell]\to\mathbb{N}[\on{Res}(\ell)]$. Consider the map $\wt{\Phi}_\ell$ completing the diagram
$$\xymatrix{\mathbb{N}[\on{Res}(\ell)]\ar[rd]^{\Phi_\ell} \ar[d]_{\rho} & \\
\mathbb{N}[\mathcal{R}_\ell] \ar[r]_{\wt{\Phi}_\ell} & \Delta(\ell)}$$
where $\Delta(\ell)$ is the lattice of $\delta$-vectors of orbifold contributions of local index $\ell$ as above. $\wt{\Phi}_\ell$ exists and is unique since, geometrically, $\rho$ applies a collection of crepant blowups that preserve the orbifold contributions $Q_\sigma$ and hence their $\delta$-vectors. We make use of semigroups of the form $\N\langle R\rangle$ to record the (nonnegative) quantities of each singularity inside a basket. These maps all extend to lattice homomorphisms
$$\Phi_\ell^\Z:\Z\la\on{Res}(\ell)\ra\to\Delta(\ell),\wt{\Phi}_\ell^\Z:\Z\la\mathcal{R}_\ell\ra\to\Delta(\ell),\rho^\Z:\Z\la\on{Res}(\ell)\ra\to\Z\la\mathcal{R}_\ell\ra$$
With the objective of studying relations between orbifold contributions we assume the following conjecture, which has been verified up to local index $34$ in \textsc{Sage}. It has echoes of the $\frac{1}{2}\phi(r)$ in \cite{ypg} \S5.9 as well as of many other results across the study of Dedekind sums. The reader can add the caviate $\ell\leq 34$ on any subsequent results making use of this conjecture.

\begin{conjecture}\label{conj:1} $\on{rank}{\Delta(\ell)}=\frac{1}{2}\phi(\ell)$.
\end{conjecture}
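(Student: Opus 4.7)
The plan is to bound $\on{rank}\Delta(\ell)$ from above and below by leveraging the two types of relations on $\Z\la\mathcal{R}_\ell\ra$ that the foregoing material provides. The first is duality, $\delta(\sigma)=\delta(\ol\sigma)$, coming from $Q_\sigma=Q_{\ol\sigma}$ since these are isomorphic singularities. The second is hyperplane inversion: for each $\sigma\in\mathcal{R}_\ell$ the inverse $\sigma^{-1}$ from Lemma~\ref{lem:inv} lies in $\on{Res}(\ell)$ and maximally shatters as $\rho(\sigma^{-1})=\sigma^{-1}_1*\cdots*\sigma^{-1}_m$, giving, by Corollary~\ref{cor:can} applied to the elementary $T$-singularity $\sigma*\sigma^{-1}$, the relation
$$\delta(\sigma)+\sum_{j=1}^m\delta(\sigma^{-1}_j)=0.$$

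For the upper bound I would show that these two families of relations together span a sublattice of $\Z\la\mathcal{R}_\ell\ra$ of rank at least $\phi(\ell)/2$. Duality and hyperplane inversion act on the residual $\phi(\ell)$-cycle of indecomposable singularities, and a careful orbit count should produce the bound $\on{rank}\Delta(\ell)\leq\phi(\ell)/2$. In the special case $\ell$ prime, palindromicity of $\delta(\sigma)$ already forces the ambient palindromic subspace of $\Z^{\ell-2}$ to have rank $(\ell-1)/2=\phi(\ell)/2$, so the bound is automatic; the genuine content of the conjecture therefore lies in composite $\ell$, where the palindromic subspace is strictly larger and the hyperplane-inverse relations must provide the additional cuts.

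For the lower bound I would exhibit $\phi(\ell)/2$ indecomposables whose $\delta$-vectors are linearly independent, for instance one representative from each orbit of duality on slopes in $(\Z/\ell)^\times$. The most transparent framework is the discrete Fourier transform on $\Z/\ell$: substituting the formula $\delta_i=\ell(\delta_{r,a,(a+1)(i+1)}-\delta_{r,a,0})$ into $\hat\delta_\sigma(\eta)=\sum_i\delta_i\eta^i$, I expect $\hat\delta_\sigma(\eta)$ to vanish whenever $\eta\in\mu_\ell$ has order strictly less than $\ell$, so the support lives on the $\phi(\ell)$ primitive $\ell$-th roots of unity. Palindromicity induces the additional symmetry $\hat\delta_\sigma(\eta^{-1})=\eta\,\hat\delta_\sigma(\eta)$, and since $\eta\mapsto\eta^{-1}$ acts freely on primitive $\ell$-th roots for $\ell>2$, the effective dimension is exactly $\phi(\ell)/2$. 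Linear independence of the chosen $\delta$-vectors then reduces to a Vandermonde-type non-vanishing in Fourier coordinates, for which the proof that $\delta_1(\sigma)\ne0$ for residual $\sigma$ (given in Lemma~\ref{lem:tvan}) provides a natural base case.

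The principal obstacle is the claimed Fourier vanishing at non-primitive characters of $\Z/\ell$. This encodes an arithmetic identity among Dedekind sums at divisors of $\ell$, and although computational evidence up to $\ell\leq 34$ suggests it holds uniformly, isolating and proving the exact identity, likely via the Chinese Remainder decomposition $\mu_{k\ell}\cong\mu_k\times\mu_\ell$ in the coprime case together with reciprocity identities in the style of Reid \cite{ypg}\S5.9, is where the real technical work resides.
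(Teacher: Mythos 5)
The statement you set out to prove is presented in the paper as an open conjecture, verified computationally only up to $\ell\leq 34$; the author offers no proof, so there is no argument to compare yours against, and I can only assess the proposal on its own terms.

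The upper bound $\on{rank}\Delta(\ell)\leq\frac{1}{2}\phi(\ell)$ is sound and, in fact, already implicit in the lemmata surrounding the conjecture. One clarification is needed: your ``family'' of hyperplane-inverse relations indexed by $\sigma\in\mathcal{R}_\ell$ is actually a single relation. Since $\sigma*\sigma^{-1}$ is an elementary $T$-cone, the multiset $\{\sigma\}\cup\rho(\sigma^{-1})$ consists of exactly one copy of each vertex of the residual quiver, so $\delta(\sigma)+\sum_j\delta(\sigma^{-1}_j)=0$ reduces, for every $\sigma$, to the one identity $\sum_{\tau\in\mathcal{R}_\ell}\delta(\tau)=0$. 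Together with the $\frac{1}{2}\phi(\ell)-1$ independent duality relations $\delta(\sigma)-\delta(\ol{\sigma})=0$ --- a count that uses the paper's fact that exactly two indecomposables at each local index are self-dual --- one gets $\frac{1}{2}\phi(\ell)$ independent elements of $\ker\wt{\Phi}_\ell^\Z$ and hence the upper bound.

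The lower bound is the genuine content, and it is where the gap lies, as you flag yourself: the entire Fourier strategy hinges on the unproven claim that $\hat\delta_\sigma(\eta)=\sum_i\delta_i\eta^i$ vanishes at every $\ell$-th root of unity $\eta$ of order strictly less than $\ell$. This is a substantive arithmetic assertion --- equivalent to saying that the rational function $Q_\sigma$ has no poles at non-primitive $\ell$-th roots of unity --- and nothing in the paper establishes it; Lemma \ref{lem:tvan} only gives $\delta_1\neq 0$, a much weaker fact. Even granting the vanishing, you would still need to prove the asserted Vandermonde-type independence in Fourier coordinates of the $\frac{1}{2}\phi(\ell)$ chosen $\delta$-vectors, which you do not address beyond naming a base case. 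As written this is a plausible plan of attack, not a proof, and the two acknowledged gaps are precisely where the difficulty of the conjecture resides.
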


\subsection{Cancelling tuples}

Note that any collection of singularities in the kernel of $\Phi_\ell$ contributes zero in orbifold correction terms to the Hilbert series.

\begin{definition} A \textit{cancelling tuple} is a finite collection of residual singularities $\sigma_1,...,\sigma_n$ such that $\sum_{i=1}^nQ_{\sigma_i}=0$. 
\end{definition}

Equivalently, $\sum_{i=1}^n\sigma_i\in\on{ker}{\Phi_\ell}$. If $\sigma_1,...,\sigma_n$ is a cancelling tuple then $\rho(\sigma_1),...,\rho(\sigma_n)$ is also a cancelling tuple. Hence $\sum_{i=1}^n\rho(\sigma_i)\in\on{ker}{\wt{\Phi}_\ell}$. A cancelling tuple can be decomposed nonuniquely into \textit{minimal} cancelling tuples: cancelling tuples that contain no smaller cancelling tuples.

\begin{lemma} An elementary $T$-singularity of local index $\ell$ is composed of exactly one of every indecomposable at local index $\ell$ glued in the cyclic order prescribed by the residual quiver. Hence, distinguishing dual singularities, there are exactly $\phi(\ell)$ $T$-singularities at local index $\ell$ parameterised by choosing a starting point in the residual quiver.
\end{lemma}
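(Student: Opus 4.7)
The plan is to treat the elementary $T$-cone as a lattice object and count its primitive lattice points directly. After a $\GL_2(\Z)$ change of coordinates I place the cone for an elementary $T$-singularity of local index $\ell$ with apex at the origin and edge on the line $y=\ell$; this is possible because the local index is the lattice distance from the origin to the edge. The cone has width $\ell$, so the edge contains the $\ell+1$ lattice points $(x_0,\ell),(x_0+1,\ell),\dots,(x_0+\ell,\ell)$ with the two endpoints being primitive ray generators, hence $\on{hcf}(x_0,\ell)=1$. Since primitivity of $(x,\ell)$ is equivalent to $\on{hcf}(x,\ell)=1$, counting residues modulo $\ell$ among $x_0,\dots,x_0+\ell$ shows that exactly $\phi(\ell)+1$ of these points are primitive, leaving $\phi(\ell)-1$ in the interior of the edge.

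Inserting a ray through each of the $\phi(\ell)-1$ interior primitive lattice points is a maximal shattering and produces exactly $\phi(\ell)$ shards. Each shard has its edge still on $y=\ell$ and therefore local index $\ell$, and by construction no primitive lattice points remain in the interior of any shard's edge, so every shard is an indecomposable of local index $\ell$. Consecutive shards share a primitive ray generator lying on the common line $y=\ell$, so Definition~\ref{def:hyp} is fulfilled and they are hyperplane summable. The shards therefore realise a closed walk of length $\phi(\ell)$ along the arrows of the residual quiver at local index $\ell$; by the preceding lemma this quiver is a directed cycle of length $\phi(\ell)$, and such a closed walk must visit every vertex exactly once. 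This presents the elementary $T$-cone as the hyperplane sum of one of every indecomposable in the cyclic order prescribed by the quiver, establishing the first assertion.

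For the count, the Koll\'ar--Shepherd-Barron classification identifies elementary $T$-singularities of local index $\ell$ with the quotients $\frac{1}{\ell^2}(1,\ell c-1)$ for $\on{hcf}(\ell,c)=1$, yielding $\phi(\ell)$ isomorphism classes when dual singularities are kept separate. The bijection with choices of starting vertex of the residual quiver is then established in both directions: iterated hyperplane sum around the entire cycle starting at any chosen vertex reassembles a cone of width $\ell$ (by the sum of widths just computed) and local index $\ell$, which must be an elementary $T$-singularity; conversely each $\frac{1}{\ell^2}(1,\ell c-1)$ picks out a canonical first shard from the shattering described above, recovering a starting vertex.

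The main obstacle I anticipate is this final identification: verifying that the slope $c$ of $\frac{1}{\ell^2}(1,\ell c-1)$ is genuinely distinguished by the choice of starting vertex, so that rotating the cyclic starting point yields a genuinely different elementary $T$-singularity rather than the same one presented differently. Concretely this amounts to tracking how the slope of the first shard is read off from the $\GL_2(\Z)$ normalised form of the $T$-cone and showing that the resulting map from the $\phi(\ell)$ starting vertices to the $\phi(\ell)$ values of $c$ coprime to $\ell$ is injective.
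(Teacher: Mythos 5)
Your argument is correct but takes a genuinely different route from the paper's. The paper writes $\tau = \sigma_1 * \cdots * \sigma_m$ for an arbitrary maximal shattering without computing $m$ in advance, closes the walk into a circuit by observing that a $T$-singularity can be glued to itself along its edge (so $\sigma_m$ must immediately precede $\sigma_1$ in the quiver), and then rules out winding around the cycle more than once via Lemma~\ref{lem:tvan}: if the shards traced the quiver $p \geq 2$ times, one circuit's worth would already satisfy $\sum Q_{\sigma_i} = 0$ and hence form a strictly smaller $T$-singularity inside $\tau$, contradicting elementarity. You instead pin down $m$ outright by normalising the edge onto $y=\ell$ and counting primitive lattice points among $\ell+1$ consecutive heights, getting exactly $\phi(\ell)$ shards, after which the prior lemma that the quiver is a directed $\phi(\ell)$-cycle forces the walk to visit each vertex once. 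Your approach is more elementary --- it sidesteps Lemma~\ref{lem:tvan} and the mutation self-gluing entirely and folds the counting and the traversal into a single step --- while the paper's has the structural merit of not needing the vertex count $\phi(\ell)$ as a separately established input. One small slip: the walk of $\phi(\ell)$ shards you produce is an open walk with $\phi(\ell)-1$ arrows, not a priori closed; nothing breaks, since a directed walk on $\phi(\ell)$ vertices inside a directed $\phi(\ell)$-cycle already visits every vertex exactly once, but if you want it genuinely closed you would invoke self-gluing of $\tau$ as the paper does.

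The concern you flag about injectivity of the parametrization by starting vertex is legitimate, but it is also left implicit in the paper, which derives the second sentence from the first by fiat. A clean way to close it: once duals are distinguished (equivalently, an orientation of $N$ is fixed), the ordered maximal-shattering sequence of an oriented cone is a $\GL_2(\Z)$-equivariant invariant; the $\phi(\ell)$ cyclic rotations of $(\sigma_1,\dots,\sigma_{\phi(\ell)})$ are pairwise distinct as sequences precisely because the $\sigma_i$ are pairwise distinct, so the $\phi(\ell)$ starting vertices yield $\phi(\ell)$ non-isomorphic oriented $T$-cones, matching the $\phi(\ell)$ admissible slopes $c$ in the Koll\'ar--Shepherd-Barron normal form $\frac{1}{\ell^2}(1,\ell c - 1)$.
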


\begin{proof} Write $\tau=\sigma_1*\dots*\sigma_m$ with each $\sigma_i$ an indecomposable singularity. Since a $T$-singularity can be glued to itself by mutation, $\sigma_m$ must be the indecomposable singularity immediately preceding $\sigma_1$ in the residual quiver. Thus, since one must follow the cycle around the quiver in order to glue indecomposable singularities, $\tau$ is a circuit of the quiver beginning at $\sigma_1$ and ending at the previous vertex $\sigma_m$. Suppose $\sigma_1$ appears again as one of the $\sigma_i$ for $i>1$. Then $\tau=(\sigma_1*...*\sigma_m)*(\sigma_1*...*\sigma_m)*...*(\sigma_1*...*\sigma_m)$ so that the cycle can end with $\sigma_m$. Suppose there are $p$ cycles of the residual quiver in this decomposition of $\tau$. The cone $\sigma_1*...*\sigma_m$ has $p\sum_{i=1}^mQ_{\sigma_i}=Q_\tau=0$ and so $\sum_{i=1}^mQ_{\sigma_i}=0$. Thus by Lemma \ref{lem:tvan} $\sigma_1*\dots*\sigma_m$ is also a $T$-singularity. This contradicts the fact that $\tau$ is an elementary $T$-singularity unless there is only a single circuit of the quiver.
\end{proof}

\begin{cor} The widths of all indecomposable singularities of local index $\ell$ sum to $\ell$.
\end{cor}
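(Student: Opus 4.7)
The plan is to invoke the preceding lemma, which realises an elementary $T$-singularity $\tau$ of local index $\ell$ as the hyperplane sum $\tau=\sigma_1\ast\dots\ast\sigma_{\phi(\ell)}$ of one copy of every indecomposable singularity of local index $\ell$. Since we know that an elementary $T$-singularity of local index $\ell$ has width equal to its local index, namely $\ell$, the statement reduces to checking that widths are additive under the hyperplane sum. This is essentially built into Definition \ref{def:hyp}: if $\sigma_1=\on{Cone}(u,v)$ and $\sigma_2=\on{Cone}(v,w)$ satisfy the hypotheses, then $\sigma_1\ast\sigma_2=\on{Cone}(u,w)$ has as its edge the concatenation of the edges of $\sigma_1$ and $\sigma_2$ along the common hyperplane, so the number of lattice line segments along that edge is the sum of the counts for $\sigma_1$ and $\sigma_2$.

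With additivity of widths in hand, I would simply compute
\[
\sum_{\sigma\in\mathcal{R}_\ell}\on{width}(\sigma)=\on{width}(\sigma_1\ast\dots\ast\sigma_{\phi(\ell)})=\on{width}(\tau)=\ell,
\]
using the preceding lemma for the middle equality and the formula $\on{width}(\tau)=dn=\ell$ for $\tau=\frac{1}{\ell^{2}}(1,\ell c-1)$ (an elementary $T$-singularity, $d=1$, $n=\ell$) for the last.

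The only genuine subtlety, and thus the main thing to be careful about, is justifying that the indecomposable shards appearing in the preceding lemma are indeed pairwise distinct and exhaust all of $\mathcal{R}_\ell$, so that the first sum above really ranges over every indecomposable singularity of local index $\ell$ with multiplicity one. But this is precisely the content of that lemma: going once around the residual quiver at local index $\ell$ visits each of its $\phi(\ell)$ vertices exactly once, and any shorter circuit would produce a proper hyperplane subsum that is itself a $T$-singularity by Lemma \ref{lem:tvan}, contradicting elementarity of $\tau$. Once this is accepted, the corollary is immediate.
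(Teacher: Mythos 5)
Your proof is correct and is essentially the argument the paper intends — the corollary is stated without proof precisely because it follows immediately from the preceding lemma once one observes that widths add under hyperplane sum and that an elementary $T$-singularity of local index $\ell$ has width $\ell$. Your care in noting that the lemma guarantees the shards are distinct and exhaust $\mathcal{R}_\ell$ is the right thing to flag, and your reading of the $T$-singularity classification with $d=1$, $n=\ell$ is accurate.
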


Let $\sigma_1,...,\sigma_n$ be the indecomposable singularities of local index $\ell$ listed in cyclic order according to the residual quiver. From the lemma above $\Phi_\ell(\sum_{i=1}^n\sigma_i)=0$ and so $\sigma_1,...,\sigma_n$ form a cancelling tuple. This arises from maximally shattering an elementary $T$-cone. Hence, $\on{rank}\on{ker}\wt{\Phi}^\Z_\ell\geq1$ as there is at least one cancelling tuple of every local index, which is formed of indecomposable singularities. The objective of the rest of this section is to prove, assuming the conjecture on the rank of $\Delta(\ell)$, that:

\begin{lemma}\label{lem:can} All of the minimal cancelling tuples consisting of singularities of local index $\ell$ arise from shattering a $T$-cone in some way.
\end{lemma}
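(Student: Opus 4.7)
The plan is to reduce the problem to indecomposables via the maximal shattering map $\rho$, invoke Conjecture \ref{conj:1} to pin down $\on{ker}\wt{\Phi}_\ell^\Z$ completely, and then extract the minimal positive elements. Given a minimal cancelling tuple $\mathcal{T}$ of residual singularities, Lemma \ref{lem:add} gives that $v:=\rho(\mathcal{T})\in\N\la\mathcal{R}_\ell\ra$ is itself a cancelling tuple, hence a positive element of $\on{ker}\wt{\Phi}_\ell^\Z$. Recovering $\mathcal{T}$ from $v$ amounts to choosing how to regroup the indecomposable shards of $v$ back into residuals, which is exactly a choice of shattering; so it suffices to understand the minimal positive elements of $\on{ker}\wt{\Phi}_\ell^\Z$.

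Two families of kernel elements are immediate. First, the multiset $\mathbf{1}:=\sum_{\sigma\in\mathcal{R}_\ell}\sigma$ lies in $\on{ker}\wt{\Phi}_\ell^\Z$ by the preceding structural lemma on elementary $T$-cones. Second, the dual-swap relations $\sigma-\ol{\sigma}$ for each unordered pair of distinct quiver vertices representing the same cyclic quotient singularity lie in the kernel because $Q_\sigma$ depends only on the isomorphism class of $\sigma$, so $\delta(\sigma)=\delta(\ol{\sigma})$. Writing $s$ for the number of self-isomorphic quiver vertices, these contribute $1+(\phi(\ell)-s)/2$ linearly independent kernel elements. Since Conjecture \ref{conj:1} gives $\on{rank}\on{ker}\wt{\Phi}_\ell^\Z=\phi(\ell)/2$, matching ranks forces $s=2$ and shows that the displayed elements span the kernel.

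With this basis in hand, write $v = a\mathbf{1}+\sum_p b_p(\sigma_p-\ol{\sigma_p})$, where $p$ ranges over the dual-swap pairs. Coordinate-wise nonnegativity of $v$ is equivalent to $a\geq\max_p|b_p|$. If $a\geq 2$, then splitting each $b_p=b_{1,p}+b_{2,p}$ with $|b_{i,p}|\leq 1$ exhibits $v$ as a sum of two nonzero positive kernel elements, contradicting minimality. Hence $a=1$ and every $b_p\in\{-1,0,1\}$. The resulting $v$ differs from $\mathbf{1}$ only by swapping some vertices $\sigma$ for their dual partners $\ol{\sigma}$, which geometrically corresponds to a different orientation or choice of primitive root of unity when shattering the same elementary $T$-cone. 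Lifting through $\rho$, the original $\mathcal{T}$ arises by grouping consecutive indecomposables in such a shattering back into residuals, as claimed.

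The main obstacle is the span assertion — that $\mathbf{1}$ and the dual-swap relations truly exhaust $\on{ker}\wt{\Phi}_\ell^\Z$. This is forced by the rank count against Conjecture \ref{conj:1} and implicitly encodes the arithmetic fact that every local index admits exactly two self-isomorphic indecomposables. Once the span assertion is available, the remaining positivity and minimality analysis is a routine study of the positive cone of a rank-$\phi(\ell)/2$ lattice, as sketched above.
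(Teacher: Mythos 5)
Your overall strategy is the same as the paper's — rank counting against Conjecture \ref{conj:1}, combined with the structure of self-dual indecomposables — but you phrase it in the full lattice $\Z\la\mathcal{R}_\ell\ra$ with explicit dual-swap relations, whereas the paper works in the quotient $\Z\la\mathcal{R}_\ell/{\cong}\ra$ where those relations are already collapsed. That cosmetic difference is fine, but there is a genuine logical gap in your span step.

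You write that ``matching ranks forces $s=2$ and shows that the displayed elements span the kernel.'' Rank matching alone cannot force $s=2$: exhibiting $1+\tfrac{1}{2}(\phi(\ell)-s)$ linearly independent elements inside a kernel of rank $\tfrac{1}{2}\phi(\ell)$ only gives the \emph{lower} bound $s\geq 2$. If $s$ were larger, nothing in Conjecture \ref{conj:1} rules it out — it would merely say that your displayed elements fail to span and that there are further, undiscovered kernel relations. The upper bound $s\leq 2$ is a separate geometric fact: the residual quiver is a $\phi(\ell)$-cycle, and the duality involution $\sigma\mapsto\ol{\sigma}$ reverses the cyclic orientation, hence fixes at most two vertices. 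The paper proves exactly this and then verifies that exactly two fixed vertices occur in each congruence class of $\ell$. Without this ingredient your argument does not close. Relatedly, even once $s=2$ is known, ``span over $\R$'' needs to be upgraded to ``span over $\Z$'' before your decomposition $v=a\mathbf{1}+\sum_p b_p(\sigma_p-\ol{\sigma_p})$ has integer coefficients; this follows because $\mathbf{1}$ has unit coordinate at the self-dual indecomposable, so $a$ and hence each $b_p$ can be read off as an integer — this is precisely the primitivity observation the paper makes for the $(1,2,\dots,2,1)$ vector, and you should state it rather than attribute it to rank matching. Your final paragraph about $v$ and $\mathbf{1}$ differing by orientation choices is correct in spirit (they project to the same element of the quotient lattice and hence represent the same multiset of isomorphism classes), and the minimality analysis ($a=1$, $b_p\in\{-1,0,1\}$) is sound once the basis is established.
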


Clearly shattering a $T$-cone does produce a cancelling tuple, however the converse is more subtle.
\\

$\wt{\Phi}_\ell$ surjects onto the $\delta$-lattice $\Delta(\ell)\cong\mathbb{Z}^{\frac{1}{2}\phi(\ell)}$ as $\Phi_\ell$ does and so $\on{rank}{\wt{\Phi}_\ell^\Z}=\frac{1}{2}\phi(\ell)$. We can identify isomorphic singularities in $\mathcal{R}_\ell$: this has the effect of conflating the singularities $\sigma$ and $\bar{\sigma}$. These two singularities have the same orbifold contribution and so $\wt{\Phi}_\ell$ passes to the quotient to give a surjection $\wt{\Phi}_\ell^\Z:\mathbb{Z}\la\mathcal{R}_\ell/{\cong}\ra\to\Delta(\ell)$. The rank of $\mathbb{Z}\la\mathcal{R}/{\cong}\ra$ is $\phi(\ell)-\frac{1}{2}S(\ell)$ where $S(\ell)$ is the number of non-self dual residuals of local index $\ell$ contained in the generating set $\mathcal{R}_\ell$, which is seen by noting that one of each pair $\sigma,\bar{\sigma}$ of non-self dual residuals are exactly those that are removed by quotienting out by isomorphism. 
\\

To prove Lemma \ref{lem:can} it suffices that $\on{rank}\on{ker}\wt{\Phi}_\ell^\Z=1$ since, as seen, there is already a cancelling tuple obtained from cycling around the residual quiver and so if the kernel of $\wt{\Phi}_\ell^\Z$ is cyclic then this special cancelling tuple must generate it. This follows because the coordinate vector of this cancelling tuple in the standard basis of $\mathbb{Z}\la\mathcal{R}_\ell/{\cong}\ra$ is primitive as it contains a $1$ as an entry corresponding to the single occurence of the self-dual indecomposable singularity $\frac{1}{\ell}(1,1)$ when $\ell$ is odd or $\frac{1}{2\ell}(1,1)$ when $\ell$ is even. Rank-nullity then informs us that Lemma \ref{lem:can} is equivalent to
$$S(\ell)=\phi(\ell)-2$$
or, equivalently, that there are exactly two self-dual singularities contained in $\mathcal{R}_\ell$ for any $\ell$.
\\

A self-dual residual of width $w$ and slope $c$ is one for which $(wc-1)^2\equiv 1\on{mod}{\ell w}$ or, equivalently, $wc\equiv 2\on{mod}{\ell}$. Suppose $\ell$ is odd. There is then exactly one self-dual residual of width $1$ and $2$ given by the equation $w=2\bar{c}$ as $2$ is invertible modulo $\ell$. If $\ell$ is even then at width $w=2$ one can solve for invertible $c$ obtaining $c=1$. Indeed $c\equiv 1\on{mod}{\ell}$ is needed but this satisfies the coprimality conditions.

\begin{lemma} There are at most two self-dual indecomposables at any local index.
\end{lemma}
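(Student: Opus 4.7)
The plan is to work directly from the two defining conditions on a self-dual indecomposable $(k,c)$: self-duality $kc\equiv 2\pmod{\ell}$, and indecomposability $\on{hcf}(\ell,wc-1)>1$ for every $0<w<k$. Multiplying self-duality through by an integer $w$ produces the key identity
$$k(wc-1)\equiv 2w-k\pmod{\ell},$$
which converts statements about $wc-1$ modulo $\ell$ into statements about the much simpler linear expression $2w-k$, modulated by $\on{hcf}(\ell,k)$. The relation $kc\equiv 2\pmod{\ell}$ already forces $\on{hcf}(\ell,k)\mid 2$.

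First I would dispose of $\ell$ odd. Here $\on{hcf}(\ell,k)=1$, so the identity upgrades to $\on{hcf}(\ell,wc-1)=\on{hcf}(\ell,2w-k)$. If $k\geq 3$, one finds $w\in(0,k)$ with $2w-k\in\{\pm1,\pm2\}$ (take $w=(k\pm1)/2$ when $k$ is odd, $w=k/2\pm1$ when $k$ is even); since $\on{hcf}(\ell,1)=\on{hcf}(\ell,2)=1$, this contradicts indecomposability. Hence $k\in\{1,2\}$, and each width determines $c$ uniquely via $kc\equiv 2\pmod{\ell}$, giving at most two self-duals.

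Next I would treat $\ell$ even. Coprimality $\on{hcf}(\ell,c)=1$ forces $c$ odd, and then self-duality forces $k$ even with $\on{hcf}(\ell,k)=2$. Write $\ell=2^a m$ with $m$ odd. For odd $w$, $wc-1$ is even and $\on{hcf}(\ell,wc-1)\geq 2$ is automatic; for even $w$, the identity combined with $\on{hcf}(\ell/2,k/2)=1$ collapses the condition to $\on{hcf}(m,w-k/2)>1$. Setting $K=k/2$ and $w=2j$ for $j=1,\dots,K-1$, the same parity trick produces some $2j-K\in\{\pm1,\pm2\}$ whenever $K\geq 3$, contradicting $\on{hcf}(m,1)=\on{hcf}(m,2)=1$ for the odd integer $m$. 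Hence $K\leq 2$, so $k\in\{2,4\}$. Enumerating the odd $c$ solving $kc\equiv 2\pmod{\ell}$: at $k=2$ the representatives of $c\equiv 1\pmod{\ell/2}$ yield two odd $c$ when $\ell\equiv 0\pmod{4}$ and one when $\ell\equiv 2\pmod{4}$, while at $k=4$ the congruence $4c\equiv 2\pmod{\ell}$ is solvable only when $\ell\equiv 2\pmod{4}$ and then contributes exactly one odd representative. In each case the total is at most two.

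The main obstacle is the bookkeeping in the even case: the two-fold ambiguity of $c$ modulo $\ell$ versus modulo $\ell/2$ must be balanced against the parity constraint, and one checks uniformly across $\ell\equiv 2\pmod{4}$, $\ell\equiv 0\pmod{4}$ not a power of two, and $\ell$ a power of two that the total never exceeds two. The conceptual point is that indecomposability of a self-dual $(k,c)$ is controlled by the smallest $|2w-k|$ coprime to the odd part of $\ell$, and since $1$ and $2$ are coprime to every odd integer this minimum is attained immediately, forcing $k$ to be small.
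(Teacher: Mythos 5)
Your argument is correct, and it takes a genuinely different route from the paper's. The paper's proof is purely structural: having already established that the residual quiver at local index $\ell$ is a directed cycle of length $\phi(\ell)$, it observes that duality $\sigma\mapsto\ol{\sigma}$ is an arrow-reversing involution of this cycle, hence a reflection of the underlying $\phi(\ell)$-gon, hence fixes at most two vertices. That is a two-line argument, but it leans entirely on the earlier quiver lemma. Your proof is elementary and self-contained: you extract from self-duality $kc\equiv 2\pmod\ell$ the identity $k(wc-1)\equiv 2w-k\pmod\ell$, observe that $\gcd{\ell,k}$ divides $2$, and then use the fact that $1$ and $2$ are coprime to the odd part of $\ell$ to force the width $k$ (or $k/2$ in the even case) to be at most $2$; the remaining case analysis over $\ell\bmod 4$ pins down the possible $c$ and never yields more than two candidates. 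The trade-off is that your route is longer and requires the parity bookkeeping you flag, but it buys independence from the quiver structure and in effect re-derives the explicit list of self-duals that the paper only states after the lemma. One small presentational point: you only need the upper bound here, so it is fine that you never verify that your candidate $(k,c)$ pairs actually are indecomposable of width $k$ — you correctly bound the set of self-dual indecomposables by a superset of size at most two — but it would be worth saying this explicitly so a reader does not think a verification step is missing.
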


\begin{proof}
Consider the residual quiver $\mathcal{Q}(\ell)$ for local index $\ell$, which is a $\phi(\ell)$-cycle. It carries an involution given by $\iota:\sigma\mapsto\bar{\sigma}$ which reverses the direction of the arrows. $\iota$ hence fixes at most $2$ vertices, which correspond to self-dual indecomposables by definition.
\end{proof}

There are actually exactly two self-dual residuals at every local index; explicitly these are
$$\begin{cases}
\frac{1}{\ell}(1,1),\;\;\;\frac{1}{2\ell}(1,1) & \text{if $\ell$ is odd,} \\
\frac{1}{2\ell}(1,1),\;\;\;\frac{1}{2\ell}(1,\ell+1) & \text{if $\ell\equiv 0,4\on{mod}{8}$,} \\
\frac{1}{2\ell}(1,1),\;\;\;\frac{1}{4\ell}(1,\ell+1) & \text{if $\ell\equiv 2\on{mod}{8}$,} \\
\frac{1}{2\ell}(1,1),\;\;\;\frac{1}{4\ell}(1,3\ell+1) & \text{if $\ell\equiv 6\on{mod}{8}$.}
\end{cases}$$
as can be verified by some modular arithmetic. This proves Lemma \ref{lem:can} subject to Conjecture \ref{conj:1}.

\begin{cor} One can order the generating set $\mathcal{R}_\ell/{\cong}$ of $\Z\la\mathcal{R}_\ell/{\cong}\ra$ such that in those coordinates $\on{ker}{\wt{\Phi}_\ell^\Z}=\Z\cdot(1,2,...,2,1)$ where the ones correspond to the two self-dual indecomposables and the twos identify the $\frac{1}{2}\phi(\ell)-1$ non-self dual pairs.
\end{cor}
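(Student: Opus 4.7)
The plan is to piece together the structural facts that have just been established: Lemma \ref{lem:can} pins down the kernel of $\wt{\Phi}_\ell^\Z$ as rank one, the preceding lemma identifies a canonical generator coming from shattering an elementary $T$-cone, and the explicit count of self-dual indecomposables then lets us read off the coordinates after passing to the quotient by isomorphism.

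First, I would invoke Lemma \ref{lem:can}, which implies that $\ker\wt{\Phi}_\ell^\Z$ has rank $1$: indeed the rank equals $\on{rank}\Z\la\mathcal{R}_\ell/{\cong}\ra-\on{rank}\wt{\Phi}_\ell^\Z=\tfrac{1}{2}\phi(\ell)+1-\tfrac{1}{2}\phi(\ell)=1$, using that $S(\ell)=\phi(\ell)-2$ was the arithmetic content extracted from Lemma \ref{lem:can}. So to describe $\ker\wt{\Phi}_\ell^\Z$ it is enough to exhibit a single primitive vector lying in it.

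Next, take the canonical cancelling tuple arising from maximal shattering of an elementary $T$-cone: by the lemma establishing that an elementary $T$-singularity of local index $\ell$ is a single circuit of the residual quiver, this tuple is $\sum_{\sigma\in\mathcal{R}_\ell}\sigma$, with each indecomposable appearing exactly once. Pushing this vector through the quotient map $\Z\la\mathcal{R}_\ell\ra\twoheadrightarrow\Z\la\mathcal{R}_\ell/{\cong}\ra$ that identifies $\sigma$ with $\bar\sigma$, each non-self dual pair $\{\sigma,\bar\sigma\}$ collapses into a single basis element with coefficient $2$, while each of the two self-dual indecomposables remains with coefficient $1$.

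Finally, I would choose the ordering of $\mathcal{R}_\ell/{\cong}$ so that the two self-dual classes appear as the first and last basis elements and the $\tfrac{1}{2}\phi(\ell)-1$ non-self dual pairs occupy the middle positions. The image of the canonical cancelling tuple then has coordinate vector $(1,2,\dots,2,1)$. Since this vector contains an entry equal to $1$, it is primitive in $\Z\la\mathcal{R}_\ell/{\cong}\ra$, so together with the rank-one conclusion above it generates the whole kernel, giving $\ker\wt{\Phi}_\ell^\Z=\Z\cdot(1,2,\dots,2,1)$. There is no serious obstacle here; the one thing to be a touch careful about is that the quotient map $\Z\la\on{Res}(\ell)\ra\to\Z\la\mathcal{R}_\ell/{\cong}\ra$ really does intertwine $\wt\Phi_\ell^\Z$ with the map it induces, which follows because $\sigma$ and $\bar\sigma$ have identical orbifold contributions and hence identical $\delta$-vectors.
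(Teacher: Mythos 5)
Your proposal is correct and follows essentially the same route the paper takes: rank--nullity combined with the count $S(\ell)=\phi(\ell)-2$ of non-self-dual indecomposables gives that $\ker\wt\Phi_\ell^\Z$ has rank one, the maximally shattered elementary $T$-cone gives the explicit vector, and its coordinate entry $1$ at a self-dual class makes it primitive and hence a generator. The only cosmetic difference is direction of presentation — the paper proves the two-self-duals count first and then deduces Lemma \ref{lem:can} and the corollary together, whereas you take Lemma \ref{lem:can} as the starting point — but since the two statements are equivalent via exactly this rank-nullity-plus-primitivity argument, the content is identical.
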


It also follows that the residual quiver always takes the form
\\

\hspace{2in}
\begin{tikzpicture}[scale=1]
\node (sd1) at (-5,2) {$\sigma_{\text{sd}}^1$};
\node (b) at (-4,2.5) {$\sigma_1$};
\node (e1) at (-3,3) {$...$};
\node (c) at (-2,2.5) {$\sigma_m$};
\node (sd2) at (-1,2) {$\sigma_{\text{sd}}^2$};
\node (cb) at (-2,1.5) {$\bar{\sigma}_m$};
\node(e2) at (-3,1) {$...$};
\node (bb) at (-4,1.5) {$\bar{\sigma}_1$};

\draw[->] (sd1)--(b);
\draw[->] (b)--(e1);
\draw[->] (e1)--(c);
\draw[->] (c)--(sd2);
\draw[->] (sd2)--(cb);
\draw[->] (cb) -- (e2);
\draw[->] (e2) -- (bb);
\draw[->] (bb) -- (sd1);
\end{tikzpicture}

where the $\sigma_{\text{sd}}^i$ are the two self-dual indecomposables, and the $\sigma_i,\bar{\sigma}_i$ are the non-self dual pairs of indecomposables; so $m={\frac{1}{2}\phi(\ell)-1}$. Observe that if one maximally shatters a non-elementary $T$-cone $\tau$ then one must obtain a non-minimal cancelling tuple: the result will consist of a minimal cancelling tuple for each elementary $T$-cone inside $\tau$. Because the degree contribution $A_\tau$ of a $T$-singularity $\tau$ is equal to its width divided by its local index, it follows that:

\begin{cor}\label{cor:pos} If $\sum{Q_{\sigma_i}}=0$ then $\sum{A\sigma_i}\in\mathbb{N}$ and the second sum is zero iff the list of $\sigma_i$s is empty.
\end{cor}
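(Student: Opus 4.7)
The plan is to combine Lemma \ref{lem:can} with the additivity of Corollary \ref{cor:can} to reduce the statement to a trivial count of elementary $T$-cones. First I would invoke Lemma \ref{lem:can} (granted subject to Conjecture \ref{conj:1}) to partition the cancelling tuple $(\sigma_1,\dots,\sigma_n)$ into a disjoint union of minimal cancelling subtuples, each of which is obtained by maximally shattering some elementary $T$-cone $\tau_j$ for $j=1,\dots,N$. After relabelling, the list decomposes as sublists $(\sigma_1^j,\dots,\sigma_{m_j}^j)$ with $\sigma_1^j *\dots *\sigma_{m_j}^j = \tau_j$ an elementary $T$-singularity.

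Next I would apply Corollary \ref{cor:can} to each sublist to obtain $\sum_i A_{\sigma_i^j} = A_{\tau_j} = \operatorname{width}(\tau_j)/\ell_{\tau_j}$. Because $\tau_j$ is \emph{elementary}, its width coincides with its local index, so $A_{\tau_j}=1$. Summing across $j$ then yields
\[
\sum_{i=1}^n A_{\sigma_i} \;=\; \sum_{j=1}^N A_{\tau_j} \;=\; N \;\in\; \N,
\]
proving the first assertion. For the second assertion, $N=0$ forces the partition into minimal cancelling subtuples to be vacuous, and hence the original list to be empty; conversely the empty tuple trivially has degree sum zero.

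The only substantive input is Lemma \ref{lem:can} itself, which has already been established; beyond that, the argument is mechanical. The one point worth attention is the disjointness of the partition: Lemma \ref{lem:can} is phrased in the formal semigroup $\N\langle\operatorname{Res}(\ell)\rangle$, and passing from there to an honest ordered partition of $(\sigma_1,\dots,\sigma_n)$ amounts to tracking multiplicities of each residual across the constituent minimal cancelling tuples. This is routine, so I do not expect any real obstacle.
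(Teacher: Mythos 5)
Your overall route --- split the cancelling tuple into minimal cancelling subtuples, invoke Lemma~\ref{lem:can} to realize each as a shattering of a $T$-cone, and apply the additivity of Corollary~\ref{cor:can} --- is essentially what the paper does. But you over-specialize Lemma~\ref{lem:can}: it asserts only that a minimal cancelling tuple arises from \emph{shattering a $T$-cone in some way}, not from \emph{maximally} shattering an \emph{elementary} one. Both of your added qualifiers can fail. At local index $\ell=5$, take the non-elementary $T$-cone $\tau=\frac{1}{50}(1,9)$ of width $10$ and shatter it at the primitive edge lattice points at lattice distances $3$ and $7$ from one endpoint, producing three residual shards of widths $3,4,3$. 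No singleton has $Q_\sigma=0$ (Lemma~\ref{lem:tvan}), and no pair can cancel since a cancelling pair would be a shattering of a $T$-cone and so would have widths summing to a multiple of $5$, which $3+4$, $3+3$, $4+3$ never do; hence this triple is a minimal cancelling tuple. Yet $\tau$ is not elementary, the shattering is not maximal (the width-$4$ shard still has a primitive interior edge point), and $\sum A_{\sigma_i}=A_\tau=2$, not $1$. So your intermediate equality $\sum A_{\sigma_i}=N$ is false.

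The conclusion of the corollary nevertheless survives if you drop those two adjectives: Corollary~\ref{cor:can} together with Example~\ref{ex:T} gives $\sum_{\sigma\in\mathcal{T}_j}A_\sigma=A_{\tau_j}=\operatorname{width}(\tau_j)/\ell_{\tau_j}=d_j\in\Z_{\geq 1}$ for \emph{any} $T$-cone $\tau_j$, elementary or not, so the total is a sum of positive integers over the minimal subtuples, hence a nonnegative integer that vanishes precisely when there are no subtuples, i.e.\ the list is empty. This is the paper's (deliberately less committal) argument.
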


\subsection{Conjecture \#2}

Another natural question is whether or not minimal cancelling tuples can involve singularities of different local indices. This is unresolved but serves to sharpen later results and so is stated as a conjecture.

\begin{conjecture}\label{conj:2} Suppose $\sum_{i=1}^nQ_{\sigma_i}=0$. Then $\ell_{\sigma_i}=\ell_{\sigma_j}$ for all $i,j$.
\end{conjecture}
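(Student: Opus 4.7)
My plan is to reformulate the vanishing $\sum Q_{\sigma_i}=0$ as an identity in a finite-dimensional group algebra and exploit a character-theoretic decomposition. Set $L=\operatorname{lcm}\{\ell_{\sigma_i}\}$ and view each $Q_{\sigma_i}$, via its $\delta$-vector, as an element of the group algebra $\mathbb{Q}[\mathbb{Z}/L\mathbb{Z}]$ by inflating the period from $\ell_{\sigma_i}$ to $L$. This algebra decomposes canonically as $\bigoplus_{d\mid L} U_d$, where $U_d$ is the span of primitive characters of exact order $d$; this refines the coarser filtration of periodic sequences by period.

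The central claim I would try to establish is a \emph{purity} statement: the image of $Q_\sigma$ lies entirely in $U_{\ell_\sigma}$. Equivalently, for every proper divisor $d$ of $\ell_\sigma$ and every primitive $d$-th root of unity $\zeta$, the numerator $p_\sigma$ appearing in $(\star)$ vanishes at $\zeta$. The boundary case $d=1$ amounts to $\sum_j \delta_j=0$; non-vanishing at primitive $\ell_\sigma$-th roots is a direct consequence of $\delta_1\neq 0$, which is the content of the proof of Lemma \ref{lem:tvan}. The intermediate divisors should yield to the Dedekind-sum manipulations already used in the paper to prove palindromy and the vanishing $\delta_0=\delta_{\ell-1}=0$: substituting $\xi\mapsto\xi^{\ell_\sigma/d}$ reinterprets the evaluation at primitive $d$-th roots as a Dedekind sum at the divisor level, where the same identities induced by $(a+1)\ell\equiv 0\bmod r$ force collapse.

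Granted purity, the equation $\sum Q_{\sigma_i}=0$ lives in a direct sum and thus decouples into independent identities $\sum_{\ell_{\sigma_i}=\ell}Q_{\sigma_i}=0$ indexed by $\ell$. Each nonempty local-index stratum is then a cancelling sub-tuple, so a minimal cancelling tuple cannot spread across more than one local index without contradicting minimality.

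The main obstacle is the purity claim itself: Dedekind sums evaluated at primitive $d$-th roots with $d$ a proper divisor of $\ell$ are subtle, and it is not clear that the relevant cancellations happen on the nose without deeper reciprocity input. If purity cannot be pushed through directly, a fallback is a pole-theoretic descent. Letting $\ell_{\max}$ be the maximal local index in a minimal cancelling tuple, poles at primitive $\ell_{\max}$-th roots of unity can only be contributed by summands at local index $\ell_{\max}$, forcing the $\ell_{\max}$-th cyclotomic polynomial to divide $\sum_{\ell_{\sigma_i}=\ell_{\max}}p_{\sigma_i}(t)$. Conjecture \ref{conj:1} together with Lemma \ref{lem:can}, which identifies the cycle tuple as the unique primitive element of $\ker\wt{\Phi}_{\ell_{\max}}^\Z$, then sharply constrains how this reduced sum can interact with the smaller-local-index contributions, enabling an induction on $\ell_{\max}$ to finish.
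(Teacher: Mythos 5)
This statement is posed in the paper as an explicitly open conjecture --- the text preceding it says ``This is unresolved'' --- so there is no proof in the paper to compare against; what you have written is a research proposal toward it. Before anything else, note that as literally stated the conjecture cannot hold: the disjoint union of a cancelling tuple at local index $3$ with one at local index $5$ is itself cancelling and mixes local indices. As your final sentence correctly recognises, the intended reading is that a \emph{minimal} cancelling tuple lives at a single local index.

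The central ``purity'' claim you propose is false. Take $\sigma = \tfrac{1}{20}(1,3)$, which has local index $5$; by the paper's own Example~\ref{ex:l51} its (abbreviated) $\delta$-vector is $(2,1,2)$, so
\[
Q_\sigma = \frac{2t + t^2 + 2t^3}{5(1-t^5)},
\]
and the numerator evaluates to $5$ at $t=1$. Thus $Q_\sigma$ has a genuine simple pole at the non-primitive $5$th root $t=1$, and your $d=1$ condition $\sum_j\delta_j=0$ is simply not a feature of residual contributions. (This is no accident: the orbifold terms \emph{do} contribute to the pole at $t=1$; the paper's remark before Corollary~\ref{cor:sdeg} only asserts that they never contribute a \emph{triple} pole.) Interestingly, the $d=1$ case is also the one you do not need: if one only knows that the numerator $p_\sigma$ vanishes at all primitive $d$-th roots for every proper divisor $d$ with $1<d<\ell_\sigma$ (call this weak purity), your decoupling still goes through, because a rational function of the shape $(\star)$ whose only poles lie at $t=1$ would force $\tfrac{1-t^\ell}{1-t}$ (degree $\ell-1$) to divide a numerator of degree at most $\ell-2$, hence be zero. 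So the $d=1$ obstruction is harmless; the real gap is that weak purity for $1<d<\ell_\sigma$ is not established and is not forced by palindromy (which only gives the $d=2$ case when $\ell$ is even). A dimension count shows weak purity would give $\operatorname{rank}\Delta(\ell)\leq\tfrac12\phi(\ell)$, so it is consistent with Conjecture~\ref{conj:1}, but Conjecture~\ref{conj:1} is a rank statement and does not identify \emph{which} codimension-$\tfrac{\ell-1-\phi(\ell)}{2}$ sublattice $\Delta(\ell)$ is; it therefore does not imply weak purity. Your fallback fares similarly: for $\ell_{\max}$ prime, $\deg\Phi_{\ell_{\max}}=\ell_{\max}-1$ exceeds the numerator degree $\ell_{\max}-2$, so absence of poles at primitive $\ell_{\max}$-th roots does force $S_{\ell_{\max}}=0$ and the minimality argument closes; but for composite $\ell_{\max}$ one needs precisely the injectivity of evaluation at primitive $\ell_{\max}$-th roots on $\Delta(\ell_{\max})$, which is again a refinement of Conjecture~\ref{conj:1} rather than a consequence of it. In short: the structural idea (decouple by pole location and invoke minimality) is reasonable and the prime-index case genuinely goes through, but the key analytic input --- weak purity, or equivalently that a nonzero element of $\Delta(\ell)$ must have a pole at a primitive $\ell$-th root --- is the actual content of the conjecture and is left unproved.
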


\begin{remark} Let $\on{CQS}(\ell)$ be the set of cyclic quotient singularities of local index $\ell$ and consider the map $\rho:\on{CQS}(\ell)\to\mathbb{N}[\mathcal{R}_\ell]\to\mathbb{N}[\mathcal{R}_\ell/{\cong}]$. This is a surjection onto a free semigroup of rank $\frac{1}{2}\phi(\ell)+1$. The $T$-singularities form a ray given in coordinates as above by $\N\cdot(1,2,...,2,1)$ within this semigroup that exactly consists of the $\mathbb{Q}$-Gorenstein smoothable singularities. We are curious about any analogous combinatorics in higher dimensions, which may lead to or confirm a suitable definition of `residual singularity' there.
\end{remark}

\section{Recovering baskets from Hilbert series}

\subsection{Decomposing baskets} We will start by decomposing a basket of singularities into two pieces - one containing only cancelling tuples, and one that is actually detectable by the Hilbert series.

\begin{definition} Let $X$ be an orbifold del Pezzo surface with basket $\mathcal{B}_X$.
\begin{itemize}
\item An \textit{invisible basket} in $\mathcal{B}_X$ is a subset $\mathcal{IB}\subset\mathcal{B}_X$ such that $\sum_{\sigma\in\mathcal{IB}}Q_\sigma=0$ and that no nonempty subcollection $\emptyset\not=T\subset\mathcal{B}\setminus\mathcal{IB}$ has that $\sum_{\sigma\in T}Q_\sigma=0$.
\item The collection $\mathcal{B}_X\setminus\mathcal{IB}$ is called the \textit{reduced basket} for $\mathcal{IB}$ in $\mathcal{B}_X$. It will be denoted by $\mathcal{RB}$.
\end{itemize}
\end{definition}

Equivalently, call a multiset $\mathcal{S}$ of singularities \textit{invisible} if $\sum_{\sigma\in\mathcal{S}}Q_\sigma=0$. An invisible basket for $X$ is a maximal invisible submultiset $\mathcal{IB}\subset\cB_X$.

\begin{definition} Let $\mathcal{B}$ be a multiset of singularities. Set $\mathcal{B}(\ell):=\{\sigma\in\mathcal{B}:\ell_\sigma=\ell\}$ to be the $\ell$\textit{th piece} of $\mathcal{B}$. Define $\mathcal{RB}(\ell)$ and $\mathcal{IB}(\ell)$ similarly.
\end{definition}

The orbifold correction terms of a Hilbert series provide data only on the level of a reduced basket as an invisible basket is by definition invisible to it. The extent to which a series determines a reduced basket is discussed below. Conjecture \ref{conj:2} implies the following:

\begin{quote}\textit{Suppose $X$ is an orbifold del Pezzo surface with basket $\mathcal{B}$ featuring singularities of local indices $\ell_1,..,\ell_N$. Then the decomposition
$$\on{Hilb}_X(t)=\frac{1+(K_X^2-2)t+t^2}{(1-t)^3}+\sum_{\sigma\in\mathcal{B}(\ell_1)}Q_\sigma+...+\sum_{\sigma\in\mathcal{B}(\ell_N)}Q_\sigma$$
is unique in that it corresponds to grouping terms with a common denominator of the form $1-t^{\ell_i}$.}
\end{quote}

Consequently, write $Q_\mathcal{B}(\ell):=\sum_{\sigma\in\mathcal{B}(\ell)}{Q_\sigma}$ for the $\ell$th part of the decomposition of the orbifold contribution from $\mathcal{B}$. An easy fact independent of this conjecture is that the initial term can be identified from the Hilbert series as a whole as the only part with a triple pole at $1$. Note that the order of vanishing at $1$ cannot be diminished by the numerator since $K_X^2>0$ implies that $1+(K_X^2-2)t+t^2$ cannot have $1$ as a root.

\begin{cor}\label{cor:sdeg} The degree of an orbifold del Pezzo surface is determined by its Hilbert series.
\end{cor}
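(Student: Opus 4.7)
The plan is to exploit the order of pole of $\Hilb_X(t)$ at $t=1$ to isolate the initial term, from which $K_X^2$ can be read off directly. First I would analyze the pole structure of each orbifold contribution. By the form $(\star)$ established earlier, for any $\sigma$ of local index $\ell$,
$$Q_\sigma = \frac{\delta_0 + \delta_1 t + \cdots + \delta_{\ell-1}t^{\ell-1}}{\ell(1-t^\ell)}.$$
Since $1-t^\ell = (1-t)(1+t+\cdots+t^{\ell-1})$ and the second factor equals $\ell \neq 0$ at $t=1$, the rational function $Q_\sigma$ has at most a simple pole at $t=1$. Summing over the finite basket $\cB$ of $X$, the full orbifold correction $\sum_{\sigma \in \cB} Q_\sigma$ still has at most a simple pole at $t=1$.

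Next I would examine the initial term $\frac{1+(K_X^2-2)t+t^2}{(1-t)^3}$. Evaluating its numerator at $t=1$ gives $1 + (K_X^2-2) + 1 = K_X^2$, which is strictly positive because $-K_X$ is ample. Thus the numerator does not vanish at $t=1$ and the initial term has a genuine pole of order exactly three at $t=1$. This is the step highlighted in the paragraph preceding the corollary: the initial term is distinguished inside $\Hilb_X(t)$ by being the unique summand with a triple pole.

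Combining these observations, I would multiply the Hilbert series by $(1-t)^3$ and pass to the limit $t \to 1$. The orbifold contributions vanish (a triple zero defeating at most a simple pole), leaving
$$\lim_{t\to 1}(1-t)^3\Hilb_X(t) = 1 + (K_X^2-2) + 1 = K_X^2.$$
Since the left-hand side is manifestly determined by $\Hilb_X(t)$, so is $K_X^2$, which is the anticanonical degree.

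The only potentially subtle point is ruling out the possibility that cancellations among the $Q_\sigma$ conspire to introduce a higher-order pole at $t=1$; but this cannot occur since each summand individually has at most a simple pole, and sums of such remain meromorphic with at most a simple pole at $t=1$. Thus no real obstacle arises, and the corollary follows without invoking either Conjecture \ref{conj:1} or Conjecture \ref{conj:2}.
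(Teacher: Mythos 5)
Your argument is correct and is precisely the paper's own reasoning: the paragraph preceding the corollary observes that the initial term is the unique summand with a triple pole at $t=1$ (since $K_X^2>0$ prevents the numerator from vanishing there), while each $Q_\sigma$ has at most a simple pole. Your version simply makes the final extraction explicit by multiplying by $(1-t)^3$ and taking $t\to 1$, which is a clean way of packaging the same observation.
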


\subsection{Convex geometry and reduced baskets}

We situate the problem of computing the possible reduced baskets for an orbifold del Pezzo surface with a given Hilbert series or total orbifold contribution in the setting of convex geometry where it is most easily visualised. This will be accompanied by an example for local index $5$. Recall that $\on{Res}(\ell)$ is the set of indecomposable singularities of local index $\ell$.
\\

Denote by $\on{Res^+}(\ell)$ a choice of one residual singularity of each hyperplane inverse pair $\{\sigma,\sigma^{-1}\}$. There is a map
$$\ph:\Z\langle\on{Res^+}(\ell)\rangle\to\N\langle\on{Res}(\ell)\rangle$$
given by interpreting interpreting a linear combination $v_1\sigma_1+\dots+v_r\sigma_r\in\Z\langle\on{Res^+}(\ell)\rangle$ as a basket of singularities by taking $v_i$-copies of $\sigma_i$ if $v_i\geq0$ or $(-v_i)$ copies of $\sigma_i^{-1}$ if $v_i<0$. Observe that by construction the image of $\ph$ consists of all baskets containing no cancelling pairs and hence it must contain every possibility for a reduced basket. Composing with the map $\Phi_\ell$ to $\Delta(\ell)$ gives the linear map $\Phi_\ell^+:\Z\langle\on{Res^+}(\ell)\rangle\to\Delta(\ell)$ associating to $\sigma$ the $\delta$-vector of its orbifold contribution $Q_\sigma$. Note that this is well-defined as $Q_{\sigma^{-1}}=-Q_\sigma$. The elements of $\on{ker}{\Phi_\ell^+}$ correspond to cancelling $m$-tuples with $m>2$.

\begin{example}\label{ex:l51} At local index $5$ there are eight residual singularities falling into the following inverse pairs:
$$\{\frac{1}{5}(1,1),\frac{1}{20}(1,11)\},\{\frac{1}{5}(1,2),\frac{1}{20}(1,3)\},\{\frac{1}{10}(1,1),\frac{1}{15}(1,11)\},\{\frac{1}{10}(1,3),\frac{1}{15}(1,2)\}$$
Making the choice
$$\on{Res^+}(5)=\{\sigma_1=\frac{1}{5}(1,1),\sigma_2=\frac{1}{20}(1,3),\sigma_3=\frac{1}{10}(1,1),\sigma_4=\frac{1}{15}(1,2)\}$$
gives orbifold contributions with $\delta$-vectors
$$q_1=(1,-2,1),q_2=(2,1,2),q_3=(3,4,3),q_4=(1,3,1)$$
Their span is a two dimensional lattice since $q_1+q_4=q_2$ and $q_1+2q_4=q_3$. These relations define the cancelling tuples
$$\frac{1}{5}(1,1),\frac{1}{5}(1,2),\frac{1}{15}(1,2)\;\;\text{ and }\;\;\frac{1}{5}(1,1),\frac{1}{15}(1,11),\frac{1}{15}(1,2),\frac{1}{15}(1,2).$$
\end{example}

Now suppose that $\delta\in\Delta(\ell)$ is the $\delta$-vector of some rational function
$$Q=\frac{\delta_1t+\delta_2t^2+\dots+\delta_{\ell-2}t^{\ell-2}}{\ell(1-t^\ell)}$$
that could be the total orbifold contribution of some basket $\cB$ of singularities of local index $\ell$. Suppose $\cB_0\in\Z\la\on{Res^+}(\ell)\ra$ satisfies
$$\Phi_\ell^+(\cB_0)=\delta$$
That is, it is a particular solution to the problem of finding a basket producing the given total orbifold contribution. By definition, any other basket with this property will differ as an element of $\Z\la\on{Res^+}(\ell)\ra$ by a cancelling tuple or, equivalently, an element of $\on{ker}\Phi_\ell^+$.

\begin{example}\label{ex:l52} Consider the orbifold contribution
$$Q=\frac{2t^3+t^2+2t}{5(1-t^5)}$$
with $\delta$-vector $(2,1,2)$. A particular basket producing this orbifold contribution is
$$\cB_0=\{\frac{1}{5}(1,1),\frac{1}{15}(1,2)\}$$
corresponding to the vector $(1,0,0,1)\in\Z^4$ in the coordinates of Example \ref{ex:l51}. The set of baskets containing no cancelling pairs with this total orbifold contribution is, in coordinates,
$$\cB_0+\on{ker}\Phi_\ell^+=\{(1+\lambda+\mu,-\lambda,-\mu,1+\lambda+2\mu):\lambda,\mu\in\Z\}$$
\end{example}

In order to find all of the reduced baskets - those not containing any cancelling tuples - that produce a given total orbifold contribution, one must exclude all baskets containing cancelling tuples. To this end, define the \textit{signature} of a vector $v\in\Z^n$ to be
$$\on{sgn}(v):=(\on{sgn}(v_1),\dots,\on{sgn}(v_n))$$
where $\on{sgn}$ is the usual sign function satisfying $\on{sgn}(0)=0$. Define
$$L_v:=\bigoplus_{v_i\not=0}\N\cdot\on{sgn}(v_i)e_i\oplus\bigoplus_{v_i=0}\Z\cdot e_i\text{ and }S(v):=v+L_v$$
A vector $u\in\mathbb{Z}^n$ is said to \textit{feature} in another vector $v\in\mathbb{Z}^n$ if $u\in S(v)$. Note that $S(v)$ is a smooth affine rational polyhedral cone. Inside the lattice $\Z\la\on{Res^+}(\ell)\ra$ using as coordinates the distinguished basis $\on{Res^+}(\ell)$, the cone $S(v)$ consists of the baskets containing $v$ since allowing no sign changes corresponds to the property that no singularities appearing in $v$ can be removed in moving to a basket found in $S(v)$.
\\

If $v$ is a cancelling tuple, no reduced baskets can lie in $S(v)$ as all baskets there will all contain the cancelling tuple $v$. In particular, there can only be finitely many solutions along any affine ray of the form $\{u+\lambda v:\lambda\geq0\}$ parallel to $\on{ker}\Phi_\ell^+$, since eventually the cancelling tuple $v\in\on{ker}\Phi_\ell^+$ will feature in $u+\lambda v$ for $\lambda\gg0$. This shows...

\begin{lemma}\label{lem:ray} There only finitely many reduced baskets along each affine ray parallel to $\on{ker}\Phi_\ell^+$.
\end{lemma}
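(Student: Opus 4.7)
The plan is to make the heuristic suggested in the paragraph preceding the lemma completely precise: the cancelling tuple $\ph(v)$ corresponding to the direction $v$ of the ray is eventually contained in every basket $\ph(u+\lambda v)$, which forces non-reducedness for all sufficiently large $\lambda$.

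The case $v = 0$ gives a single point and is trivial, so assume $v \neq 0$. Since $v \in \on{ker}\Phi_\ell^+$, the multiset $\ph(v) \in \N\la\on{Res}(\ell)\ra$ is non-empty and satisfies
$$\sum_{\sigma \in \ph(v)} Q_\sigma \;=\; \Phi_\ell\bigl(\ph(v)\bigr) \;=\; \Phi_\ell^+(v) \;=\; 0,$$
so $\ph(v)$ is a non-empty cancelling tuple.

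The key polyhedral observation is that, in the coordinates $\on{Res^+}(\ell)$, the condition $w \in S(v) = v + L_v$ says precisely that $\ph(w)$ contains $\ph(v)$ as a sub-multiset: one needs $w_i \geq v_i$ whenever $v_i > 0$ and $w_i \leq v_i$ whenever $v_i < 0$. Substituting $w = u + \lambda v$, in either sign case the inequality rearranges to $\lambda \geq 1 - u_i/v_i$. Setting $\Lambda := \max_{i : v_i \neq 0}(1 - u_i/v_i)$ therefore guarantees $u + \lambda v \in S(v)$ for every integer $\lambda \geq \Lambda$.

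For each such $\lambda$ the basket $\ph(u+\lambda v)$ contains the non-empty cancelling tuple $\ph(v)$ and so cannot be reduced. Hence any reduced basket on the ray corresponds to some $\lambda \in \N$ with $\lambda < \Lambda$, and this is a finite set. The whole argument is a direct polyhedral unpacking of the definitions of $S(v)$ and of a reduced basket; there is no real obstacle, and the only point of substance is the identification of $\ph(v)$ as the explicit cancelling tuple that obstructs reducedness far out along the ray.
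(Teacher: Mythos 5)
Your proof is correct and takes essentially the same approach as the paper: the paragraph preceding the lemma sketches exactly this argument (the cancelling tuple $v$ eventually features in $u+\lambda v$), and you have simply made the estimate on $\lambda$ explicit via the inequality $\lambda \geq 1-u_i/v_i$ coordinatewise. The only remark worth making is that you have correctly identified the intended reading of ``features in''---namely $u+\lambda v \in S(v)$, i.e.\ $\ph(u+\lambda v)$ contains $\ph(v)$ as a sub-multiset---which is consistent with the paper's description of $S(v)$ as ``the baskets containing $v$'' (the displayed definition of ``feature'' a few lines earlier appears to have $u$ and $v$ transposed).
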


Working with a given $\delta$-vector $\delta\in\Delta(\ell)$ and particular choice of basket $\cB_0$ whose total orbifold contribution has this $\delta$-vector, this means that the reduced baskets producing this total orbifold contribution biject with lattice points in the complement of the union as $v$ ranges over all cancelling tuples of the convex rational polyhedra
$$K_{v,\delta}:=S(v)\cap(\cB_0+\on{ker}\Phi_\ell^+)=S(v)\cap(\Phi_\ell^+)^{-1}(\delta)$$
inside $\cB_0+\on{ker}\Phi_\ell^+$. More concisely, the reduced baskets producing an orbifold contribution with $\delta$-vector $\delta$ biject with lattice points inside
$$(\Phi_\ell^+)^{-1}(\delta)\setminus\bigcup_{v\in\on{ker}\Phi_\ell^+}K_{v,\delta}$$

\begin{example}\label{ex:l53} Returning to Example \ref{ex:l52} and the $\delta$-vector $(2,1,2)\in\Delta(5)$, since $\on{ker}\Phi_\ell^+\cong\Z^2$ one can sketch the intersection $K_{v,\delta}$ of the cones $S(v)$ with $\cB_0+\on{ker}\Phi_\ell^+$. Here some of the resulting polyhedra $K_{v,\delta}$ are drawn, enough for the purposes at hand.

\begin{center}
\begin{tikzpicture}
\node (o) at (0,0){};
\node (e1) at (4,0){};
\node (e2) at (0,2){};
\node (-e1) at (-4,0){};
\node (-e2) at (0,-3){};

\node (b1) at (0,0){$\bullet$};
\node (b2) at (0,-1){$\bullet$};
\node (b3) at (1,-1){$\bullet$};
\node (b4) at (-1,0){$\bullet$};

\node (c1) at (-2,1){$*$};
\node (c2) at (-1,-1){$*$};
\node (c3) at (1,0){$*$};
\node (c4) at (2,-2){$*$};
\node (c5) at (2,0){$*$};

\node (d1) at (-2,2){};
\node (d2) at (-4,1){};
\node (d3) at (4,1){};
\node (d4) at (-2,-3){};
\node (d5) at (1,2){};
\node (d6) at (2,-3){};
\node (d7) at (-4,-1){};
\node (d8) at (-1,-3){};
\node (d9) at (4,-2){};
\node (d10) at (-4,-2){};

\draw[dashed] (d1) to (d4);
\draw[dashed] (d2) to (d3);
\draw[dashed] (c3.center) to (d5);
\draw[dashed] (c3.center) to (e1);
\draw[dashed] (c5.center) to (d6);
\draw[dashed] (c2.center) to (d7);
\draw[dashed] (c2.center) to (d8);
\draw[dashed] (d9) to (d10);
\end{tikzpicture}
\end{center}

In this example the polyhedra $K_{v,\delta}$ exclude a cobounded set and so there is only a finite number of possible reduced baskets for the $\delta$-vector $(2,1,2)$. That is, there are only finitely many reduced baskets giving rise to the given orbifold contribution. They can be seen from this to be
$$(1,0,0,1)\;\;\;(1,-1,1,0)\;\;\;(0,0,1,-1)\;\;\;(0,1,0,0)$$
$$\text{or }\frac{1}{5}(1,1),\frac{1}{15}(1,2);\;\;\;\frac{1}{5}(1,1),\frac{1}{5}(1,1),\frac{1}{5}(1,2),\frac{1}{10}(1,1);\;\;\;\frac{1}{10}(1,1),\frac{1}{10}(1,3);\;\;\;\frac{1}{20}(1,3).$$
They all have total degree contribution $\sum_{\sigma\in\mathcal{RB}}A_\sigma=-8/5$. 
\end{example}

\subsection{Proof of main result}

We return to the situation of general but fixed local index $\ell$ in order to generalise the phenomena found in the example above. We apply Conjecture \ref{conj:1} to show that there are only a finite number of choices for reduced basket given a particular total orbifold contribution.

\begin{thm}\label{thm:fin} There are only finitely many reduced baskets for a given total orbifold contribution.
\end{thm}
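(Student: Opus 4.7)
My plan is to interpret reduced baskets at local index $\ell$ with prescribed $\delta$-vector $\delta \in \Delta(\ell)$ as lattice points of the affine fiber $F := (\Phi_\ell^+)^{-1}(\delta) \subset \Z\la\on{Res^+}(\ell)\ra$ avoiding $\bigcup_{v \in \ker \Phi_\ell^+ \setminus \{0\}} S(v)$. It then suffices to show that this complement is bounded in $\R^N$ with $N := |\on{Res^+}(\ell)|$, since a bounded rational subset contains only finitely many lattice points; the per-local-index statement combines over the finitely many local indices relevant to any prescribed total orbifold contribution to give the theorem.

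The first step is to partition $F$ by sign pattern. For each $\epsilon \in \{-,0,+\}^N$ let $\Omega_\epsilon \subset \R^N$ denote the closed orthant with that sign pattern; then $F_\epsilon := F \cap \Omega_\epsilon$ is a rational convex polyhedron with recession cone $C_\epsilon := (\ker \Phi_\ell^+ \otimes \R) \cap \Omega_\epsilon$. Only finitely many orthants meet $F$, so it is enough to show that each $F_\epsilon$ has bounded reduced locus. If $C_\epsilon = \{0\}$ then $F_\epsilon$ itself is bounded and the result is immediate. Otherwise, choose primitive integer generators $v_1,\ldots,v_k$ of the extreme rays of $C_\epsilon$; each $v_i$ is a cancelling tuple supported in $\Omega_\epsilon$, and by Minkowski--Weyl every $u \in F_\epsilon$ admits a representation $u = p + \sum_i \mu_i v_i$ with $p$ in a bounded polytope $P_\epsilon$ and $\mu_i \geq 0$.

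The crucial observation is that $\mu_{i_0} \geq 1$ for some $i_0$ forces $u \in S(v_{i_0})$. Writing $u - v_{i_0} = p + (\mu_{i_0}-1) v_{i_0} + \sum_{i \neq i_0} \mu_i v_i$, every summand sits in $\Omega_\epsilon$, hence so does $u - v_{i_0}$; this yields $(u - v_{i_0})_j (v_{i_0})_j \geq 0$ for every $j$, which is exactly the defining condition for $u - v_{i_0} \in L_{v_{i_0}}$, i.e., $u \in S(v_{i_0})$. Contrapositively, any lattice point of $F_\epsilon$ outside $\bigcup_i S(v_i)$ admits a decomposition with $0 \leq \mu_i < 1$ throughout, so it lies in $P_\epsilon + \bigl\{\sum_i \mu_i v_i : 0 \leq \mu_i < 1\bigr\}$, a bounded subset of $\R^N$. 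The main obstacle I foresee is keeping the sign bookkeeping clean across orthants of varying dimension, especially when $C_\epsilon$ lives on a proper face and extreme-ray generators $v_i$ have vanishing entries in coordinates $j$ with $\epsilon_j \neq 0$; the calculation carries through because $L_{v_{i_0}}$ imposes no constraint on coordinates outside $\on{supp}(v_{i_0})$. Conjecture \ref{conj:1} is used only implicitly, to ensure that $\Phi_\ell^+$ is well-defined as a surjection onto $\Delta(\ell)$.
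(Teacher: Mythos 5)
Your proof is correct, and it takes a genuinely different route from the paper's. The paper's argument proceeds through the maximal shattering map $\rho$ to $\N\la\mathcal{R}_\ell\ra$, invokes Conjecture~\ref{conj:1} to conclude that $\ker\wt{\Phi}_\ell^\Z$ is rank one (spanned by the image $(1,2,\dots,2,1)$ of an elementary $T$-cone), and then runs an induction on $|\mathcal{T}|$: starting from the observation that a basket with maximal shattering $\lambda\cdot v$ must contain at least $\lfloor\lambda/(\ell+1)\rfloor$ cancelling tuples, it peels off one residual singularity at a time, losing at most one cancelling tuple per step. Your argument instead works directly with the fibre $(\Phi_\ell^+)^{-1}(\delta)$ and the sign cones $S(v)$, decomposes the fibre into orthant pieces $F_\epsilon$ with recession cone $C_\epsilon=(\ker\Phi_\ell^+\otimes\R)\cap\Omega_\epsilon$, and shows via Minkowski--Weyl that any lattice point with an extreme-ray coefficient $\mu_{i_0}\geq 1$ lands in $S(v_{i_0})$ because $u-v_{i_0}$ stays in the pointed cone $\Omega_\epsilon$. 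This is precisely the ``direct convex geometric proof'' the paper speculates about after Theorem~\ref{thm:fin}: you have in effect proved the relevant instance of the paper's unnumbered polyhedral conjecture, and your argument does not invoke Conjecture~\ref{conj:1} at all. (Your closing remark is off here: surjectivity of $\Phi_\ell^+$ onto $\Delta(\ell)$ is automatic from the definition of $\Delta(\ell)$ as the lattice generated by the $\delta$-vectors and the identity $\delta(\sigma^{-1})=-\delta(\sigma)$; Conjecture~\ref{conj:1} only computes the \emph{rank} of $\Delta(\ell)$, which your argument never uses. So you have, if anything, weakened the hypotheses of the theorem.) Both arguments still rely on Conjecture~\ref{conj:2} to reduce to a single local index. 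What the paper's inductive route buys is the explicit quantitative bound $\lambda\geq(N-n)(\ell+1)$ recorded in the corollary after Theorem~\ref{thm:fin}, which feeds directly into the singularity-count estimates of Example~\ref{ex:l55} and Corollary~\ref{cor:bdd}; your proof establishes boundedness of the reduced body without producing such an explicit radius, though one could be extracted from the polytopes $P_\epsilon$ with more work.
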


\begin{proof} From previous discussion around Conjecture \ref{conj:2} it suffices that we consider the case of a single local index $\ell$. Recall the maximal shattering map $\rho:\N\la\on{Res}(\ell)\ra\to\N\la\mathcal{R}_\ell\ra$. Let $v$ be the image of an elementary $T$-singularity under $\rho$. As seen previously, in special coordinates this is the vector $(1,2,\dots,2,1)$.
\\

Suppose that $\cB$ and $\cB'$ are two baskets with the same total orbifold contribution. Then, subject to Conjecture \ref{conj:1}, their maximal shatterings $\rho(\cB)$ and $\rho(\cB')$ must differ by a vector of the form $\lambda\cdot v$ in coordinates as above. Notice also that there are only finitely many baskets $\cB\in\N\la\on{Res}(\ell)\ra$ with a given maximal shattering $\rho(\cB)=\mathcal{T}$ as there are only finitely many ways to glue together the finite number of singularities in $\mathcal{T}$. Hence, if for any given $\mathcal{T}\in\N\la\mathcal{R}_\ell\ra$ there is some $\lambda\gg 0$ such that any $\cB\in\N\la\on{Res}(\ell)\ra$ with $\rho(\cB)=\mathcal{T}+\lambda\cdot v$ contains a cancelling tuple, then the result would be shown. The reduced baskets with the total orbifold contribution for $\mathcal{T}$ would then correspond to the preimages under $\rho$ of $\mathcal{T}+\mu\cdot v$ with $0\leq\mu<\lambda$ and that contain no cancelling tuples.
\\

This statement is equivalent to the following, which we will actually prove: for each $\mathcal{T}\in\N\la\mathcal{R}_\ell\ra$ and for each $N\in\N$ there is $\lambda\gg 0$ such that every $\cB\in\N\la\on{Res}(\ell)\ra$ with $\rho(\cB)=\mathcal{T}+\lambda\cdot v$ contains at least $N$ cancelling tuples.
\\

We proceed by induction on $|\mathcal{T}|=n$. If $n=0$, then one can construct arbitrarily many cancelling tuples inside baskets whose maximal shattering is of the form $\lambda\cdot v$ as follows. Since the indecomposable singularities in a maximal shattering of the form $\lambda\cdot v$ coalesce to form a $T$-singularity $\tau$ of width $\lambda$, a basket $\cB$ of size $p$ with this maximal shattering corresponds to a choice of $p-1$ lattice points on the edge of $\tau$, for which the corresponding crepant blowups produce the singularities in $\cB$. These lattice points must be width less than $\ell$ apart in order for the corresponding singularities to be residual. Notice that $p-1\geq\lambda$ as one has to choose a lattice point inside each of the elementary $T$-singularities constituting $\tau$. If $\lambda=\ell+1$ then at least two of the lattice points must differ by a cone of width a multiple of $\ell$, which is hence a $T$-singularity. The cones subtended by the lattice points between these two lattice points thus form a cancelling tuple. Repeating the process starting from the end of this $T$-singularity, one can produce $N$ cancelling tuples by setting $\lambda=N(\ell+1)$.
\\

Suppose the statement is true for all $\mathcal{T}$ of size $n-1$. Let $\mathcal{T}$ have size $n$ and choose $\sigma_0\in\mathcal{T}$. Then $\mathcal{T}_0=\mathcal{T}\setminus\{\sigma_0\}$ has size $n-1$ and so there is $\lambda$ such that every $\cB$ with $\rho(\cB)=\mathcal{T}_0+\lambda\cdot v$ contains at least $N+1$ cancelling tuples. By adding the single singularity $\sigma_0$ back in to such a basket $\cB$, one can reduce the number of cancelling tuples at most by $1$. Thus, this same $\lambda$ has the property that every $\cB$ with $\rho(\cB)=\mathcal{T}+\lambda\cdot v$ contains at least $N$ cancelling tuples.
\end{proof}

Using the recursion for $\lambda$ from the induction, one obtains...

\begin{cor} For a collection of indecomposable singularities $\mathcal{T}$ of size $n$, every basket $\cB$ with maximal shattering $\rho(\cB)=\mathcal{T}+\lambda\cdot v$ contains at least $N$ cancelling tuples when $\lambda\geq(N-n)(\ell+1)$.
\end{cor}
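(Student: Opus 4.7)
The plan is to induct on $n$ using the recursion implicit in the proof of Theorem \ref{thm:fin}, tracking the sharpest possible value of the inductive constant so that the bound decreases by $(\ell+1)$ each time a new element is appended to $\mathcal{T}$. For the base case $n = 0$, I would invoke verbatim the pigeonhole-and-shatter argument of Theorem \ref{thm:fin}: the cone supporting a basket $\cB$ with $\rho(\cB)=\lambda\cdot v$ has width $\lambda\ell$, its internal marked lattice points force (via pigeonhole on residues modulo $\ell$) the existence of an elementary $T$-subcone whenever $\lambda\geq\ell+1$, and iterating extracts one disjoint cancelling tuple per $(\ell+1)$ units of $\lambda$. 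So $\lambda\geq N(\ell+1)=(N-0)(\ell+1)$ gives $N$ disjoint cancelling tuples, establishing the base.

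For the inductive step, fix $\mathcal{T}$ of size $n$ with a distinguished $\sigma_0\in\mathcal{T}$ and set $\mathcal{T}_0=\mathcal{T}\setminus\{\sigma_0\}$. The recursive target is $\lambda(N,n)\leq\lambda(N-1,n-1)$, so that the bound $(N-n)(\ell+1)=((N-1)-(n-1))(\ell+1)$ for the pair $(N-1,n-1)$ transfers to $(N-n)(\ell+1)$ for the pair $(N,n)$. Given any $\cB$ with $\rho(\cB)=\mathcal{T}+\lambda\cdot v$, I would build $\cB_0$ by erasing $\sigma_0$ from the cyclic indecomposable arrangement (i.e.\ hyperplane-summing the two residuals of $\cB$ flanking $\sigma_0$ into a single residual), producing a basket with $\rho(\cB_0)=\mathcal{T}_0+\lambda\cdot v$. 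The inductive hypothesis then supplies at least $N-1$ disjoint cancelling tuples in $\cB_0$, each of which remains a cancelling tuple of $\cB$. The crux is to exhibit one further cancelling tuple in $\cB$ that involves $\sigma_0$: using the cyclic structure of the residual quiver (Lemma \ref{lem:can}), $\sigma_0$ can be packaged with a width-$(\ell-\on{width}(\sigma_0))$ run of adjacent indecomposables drawn from $\lambda\cdot v$ to complete an elementary $T$-subcone, and hence a cancelling tuple.

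The main obstacle is justifying the disjointness of this new cancelling tuple from the $N-1$ inherited from $\cB_0$. I would handle this by making a greedy choice in the inductive step: extract the $N-1$ cancelling tuples guaranteed by the hypothesis from one end of the linear arrangement of $\lambda\cdot v$, reserving an $(\ell+1)$-width buffer at the opposite end as the site where $\sigma_0$ closes up into an elementary $T$-subcone. The bound $\lambda\geq(N-n)(\ell+1)$ gives exactly enough room for this buffer, and Corollary \ref{cor:pos} ensures that the $T$-subcone built around $\sigma_0$ contributes a genuinely new cancelling tuple (nonzero width, nonzero $Q$-cancellation). Combining these $N-1$ inherited tuples with the single new tuple involving $\sigma_0$ yields the required $N$ disjoint cancelling tuples in $\cB$, completing the induction.
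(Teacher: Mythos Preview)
Your inductive scheme runs in the opposite direction to the paper's. The paper obtains the corollary by literally unwinding the recursion already present in the proof of Theorem~\ref{thm:fin}: the $\lambda$ that guarantees $N$ cancelling tuples for $\mathcal{T}$ of size $n$ is the $\lambda$ that guarantees $N+1$ tuples for $\mathcal{T}_0$ of size $n-1$, i.e.\ $\lambda(N,n)=\lambda(N+1,n-1)$, which with the base $\lambda(N,0)=N(\ell+1)$ gives $\lambda(N,n)=(N+n)(\ell+1)$. In other words, the paper's argument loses one cancelling tuple each time $\sigma_0$ is reinserted; it never needs to manufacture a new one. Your recursion $\lambda(N,n)\leq\lambda(N-1,n-1)$ is a genuinely different (and sharper) claim that is not what the paper proves.

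The inductive step you propose has real gaps. First, your construction of $\cB_0$ is not well-defined as stated: a basket is a multiset of residuals, not a cyclic arrangement, and if $\sigma_0$ lies in the interior of some residual $\sigma=\sigma_a*\sigma_0*\sigma_b\in\cB$ then deleting $\sigma_0$ leaves two pieces $\sigma_a,\sigma_b$ whose edges no longer lie on a common line, so they cannot be hyperplane-summed into one residual. The only coherent $\cB_0$ replaces $\sigma$ by the pair $\{\sigma_a,\sigma_b\}$. But then a cancelling tuple of $\cB_0$ that uses $\sigma_a$ or $\sigma_b$ is not a cancelling tuple of $\cB$, since neither is a member of $\cB$; so your claim that the $N-1$ inherited tuples ``remain cancelling tuples of $\cB$'' fails in general. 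Second, the extra tuple you build around $\sigma_0$ presupposes that $\sigma_0$ is hyperplane-adjacent, inside $\cB$, to a run of indecomposables of total width $\ell-\on{width}(\sigma_0)$ drawn from the $\lambda\cdot v$ part. You have no control over how the elements of $\mathcal{T}+\lambda\cdot v$ are grouped into the residuals of $\cB$, so there is no reason such a run sits next to $\sigma_0$; the greedy ``reserve a buffer'' device assumes a linear ordering on $\cB$ that simply does not exist. This is precisely why the paper's induction goes the other way: it only needs that reinserting $\sigma_0$ can spoil at most one of the tuples found in $\cB_0$, and never needs to locate $\sigma_0$ relative to anything.
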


Though the proof above relies on Conjecture \ref{conj:1}, it seems plausible that there is a direct convex geometric proof bypassing this dependency, the most general form of which could look like the following. Recall that an affine ray is a subset of $\R^n$ of the form $\{u+\lambda v:\lambda\geq0\}$ for some $u,v\in\R^n$ and for some $\lambda_0\in\R_{\geq0}$. It is rational if $u$ and $v$ can be chosen to be lattice points.

\begin{conjecture} Suppose $K=\bigcup_{i=1}^NK_i\subset\R^n$ is the union of finitely many affine convex rational polyhedra and suppose that $\R^n\setminus K$ contains infinitely many lattice points. Then $\R^n\setminus K$ contains an affine rational ray.
\end{conjecture}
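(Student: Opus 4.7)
The plan is to decompose $\R^n \setminus K$ into a finite family of relatively open rational polyhedra, apply pigeonhole to locate one containing infinitely many lattice points, and then extract a rational ray from it via a recession cone argument.

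First I would write each $K_i$ as an intersection $K_i = \bigcap_j H_{i,j}$ of closed rational half-spaces. By De~Morgan, $\R^n \setminus K_i = \bigcup_j H_{i,j}^c$ is a finite union of open rational half-spaces, and distributing the intersection over the union gives
\[ \R^n \setminus K \;=\; \bigcap_i\bigcup_j H_{i,j}^c \;=\; \bigcup_f U_f, \]
where $f$ ranges over choice functions and each $U_f := \bigcap_i H_{i,f(i)}^c$ is a finite intersection of open rational half-spaces. Since the left hand side contains infinitely many lattice points and this is a finite union, some $U_f$ must itself contain infinitely many lattice points; call it $U$.

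Next I would extract a rational ray from $U$ by studying its recession cone. Write $U = \{x : a_k \cdot x > b_k,\; k = 1, \dots, m\}$ with rational $a_k$ and $b_k$, and set $C := \{v : a_k \cdot v \geq 0 \text{ for all } k\}$, a rational polyhedral cone. For any $u \in U$ and $w \in C$ one has $a_k \cdot (u + \lambda w) > b_k$ for every $\lambda \geq 0$, so $U + C \subseteq U$. Because $U \cap \Z^n$ is infinite, $U$ is unbounded; by the Minkowski--Weyl structure theorem applied to $\overline{U}$, this forces $C$ to be nontrivial. Any nontrivial rational polyhedral cone contains a nonzero rational vector, which we may clear of denominators to obtain a nonzero lattice point $w \in C$. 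Choosing a lattice point $u \in U$ (which exists by the pigeonhole step) then produces the desired affine rational ray $\{u + \lambda w : \lambda \geq 0\} \subset U \subset \R^n \setminus K$.

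I do not anticipate a serious obstacle: the argument is essentially formal once the De~Morgan decomposition is in place. The only point requiring care is the passage from ``$U$ is unbounded'' to ``$C$ is nontrivial and contains a lattice point'', which is a standard consequence of the Motzkin/Minkowski--Weyl decomposition for rational polyhedra together with the fact that the extreme rays of a rational polyhedral cone are generated by lattice vectors.
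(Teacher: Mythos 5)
The paper offers no proof of this statement: it is presented as a conjecture and left open, with the author remarking only that such a result would give a convex-geometric proof of Theorem~\ref{thm:fin} bypassing Conjecture~\ref{conj:1}. Your proposal in fact supplies a correct proof and settles it. The De~Morgan distribution rewriting $\R^n\setminus K=\bigcup_f\bigcap_i H_{i,f(i)}^c$ as a finite union of open rational polyhedra is valid, so pigeonhole locates a nonempty piece $U=\{x:a_k\cdot x>b_k\}$ containing infinitely many lattice points, hence unbounded. For nonempty $U$ one has $\ol{U}=\{x:a_k\cdot x\geq b_k\}$, whose recession cone is exactly $C=\{v:a_k\cdot v\geq 0\}$; unboundedness then forces $C\neq\{0\}$, rationality of the $a_k$ yields a nonzero lattice vector $w\in C$, and $a_k\cdot(u+\lambda w)\geq a_k\cdot u>b_k$ for every $\lambda\geq 0$ gives the rational affine ray $\{u+\lambda w:\lambda\geq 0\}\subset U\subseteq\R^n\setminus K$ through any lattice point $u\in U$. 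The one step you should make explicit is the identification $\ol{U}=\{x:a_k\cdot x\geq b_k\}$ when $U\neq\emptyset$, needed so that the recession cone read off from Minkowski--Weyl is $C$ itself rather than something potentially smaller; this is standard (connect any boundary point of the closed polyhedron to an interior point of $U$ by a segment), and with that observation the argument is clean, elementary, and complete.
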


In combination with Lemma \ref{lem:ray}, by choosing $K_i$ appropriately amongst the $K_{v,\delta}$ one obtains the same result but independently of Conjecture \ref{conj:1}. The pertinent set
$$\on{RBod}(\ell,\delta):=(\Phi_\ell^+)^{-1}(\delta)\setminus\bigcup_{v\in\on{ker}\Phi_\ell^+}K_{v,\delta}$$
is then a bounded subset of $\Z\la\on{Res^+}(\ell)\ra$ whose lattice points correspond to the finite number of reduced baskets whose total orbifold contribution has $\delta$-vector $\delta$. We call this subset the \textit{reduced body} for the given total orbifold contribution. 
\\

Define the \textit{width} of a collection of singularities to be the sum of the widths of the singularities. If the collection arises from shattering a $T$-singularity $\tau$ then its width equals the width of $\tau$, which is also related to its total contribution to the degree via Corollary \ref{cor:can}.
\\

As noted in Corollary \ref{cor:sdeg} the degree of an orbifold del Pezzo surface $X$ can be read off from its Hilbert series. It has a decomposition
$$K_X^2=12-(\mathcal{RK}_X^2(\ell_1)+...+\mathcal{RK}^2(\ell_N)+\mathcal{IK}^2)$$
where $\mathcal{RK}^2(\ell):=\sum_{\sigma\in\mathcal{RB}(\ell)}{A_\sigma}$ is the degree contribution from the $\ell$th piece of the reduced basket and $\mathcal{IK}^2$ is the (nonnegative integral) contribution from the \textit{extended invisible basket}
$$\widehat{\mathcal{IB}}:=\mathcal{IB}\cup\{\text{$T$-singularities on $X$}\}$$
with
$$\mathcal{IK}^2=\sum_{\sigma\in\widehat{\mathcal{IB}}}A_\sigma=\sum_{\sigma\in\widehat{\mathcal{IB}}}\frac{\on{width}(\sigma)}{\ell_\sigma}$$
from Corollary \ref{cor:can}. Notice that the Euler number term from Lemma \ref{lem:degf} is accounted for in $\mathcal{IK}_X^2$ by including the $T$-singularities. Knowing a reduced basket for $X$ then prescribes the degree contribution of the corresponding invisible basket. Notice that the definitions of these quantities only rely on the basket of singularities on $X$ with a choice of invisible/reduced basket and so it makes sense to speak of each of them as associated to just a basket with a choice of invisible/reduced basket.

\begin{example}\label{ex:l54} For the total orbifold contribution from Example \ref{ex:l53} with $\delta$-vector $(2,1,2)\in\Delta(5)$ the four possible reduced baskets all have $\mathcal{RK}^2=-8/5$. Suppose one fixes a Hilbert series $H(t)$ with this total orbifold contribution from which one can read the degree $K^2$. Any invisible basket must have total degree contribution $\mathcal{IK}^2=12-\mathcal{RK}^2-K_X^2=\frac{68}{5}-K_X^2$, which is indeed integral since $K_X^2\equiv\frac{3}{5}\on{mod}{\mathbb{Z}}$ for any surface with this Hilbert series.
\end{example}

We now collect the results of the paper subject to Conjectures \ref{conj:1} and \ref{conj:2}, and established by the results and discussion of the last two sections.

\begin{thm}\label{thm:main} Fix a power series $H(t)\in\N\ldbrack t\rdbrack$. Either there are no orbifold del Pezzo surfaces with Hilbert series equal to $H(t)$, or
\begin{itemize}
\item the reduced basket of an orbifold del Pezzo surface with Hilbert series equal to $H(t)$ is one of a finite number of possibilities, which are determined by the orbifold correction part of $H(t)$ and in bijection with the lattice points of the associated reduced body.
\item the basket of such an orbifold del Pezzo surface with reduced basket $\mathcal{RB}$ is given by adding to the singularities in $\mathcal{RB}$ a collection of cancelling tuples - which are obtained by shattering $T$-singularities - whose total degree contribution is determined by $\mathcal{RB}$ and $H(t)$.
\end{itemize}
\end{thm}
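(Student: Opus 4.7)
The plan is to assemble the results of Sections 3--5, invoking Conjectures \ref{conj:1} and \ref{conj:2} only where they have already been used earlier in the paper. Assume an orbifold del Pezzo surface $X$ with $\Hilb_X(t)=H(t)$ exists; otherwise the first alternative of the statement holds and there is nothing to prove.

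First I would isolate the initial term of $H(t)$. Since $K_X^2>0$ the polynomial $1+(K_X^2-2)t+t^2$ does not vanish at $t=1$, so the initial term is the unique summand of $H(t)$ having a triple pole at $1$. This simultaneously recovers $K_X^2$ from $H(t)$ (recording Corollary \ref{cor:sdeg}) and leaves the total orbifold contribution $Q(t)=H(t)-(1+(K_X^2-2)t+t^2)/(1-t)^3$ readable directly from $H(t)$.

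Next I would split $Q(t)$ by local index. Under Conjecture \ref{conj:2}, every cancelling tuple is supported at a single local index, so the grouping $Q(t)=\sum_i Q_{\cB}(\ell_i)$ into terms with common denominator $1-t^{\ell_i}$ is canonical, and each $Q_{\cB}(\ell_i)$ has a well-defined $\delta$-vector $\delta_i\in\Delta(\ell_i)$ read off from $H(t)$ alone. Theorem \ref{thm:fin} then supplies finiteness of reduced baskets of local index $\ell_i$ realising $Q_{\cB}(\ell_i)$, and the convex-geometric description preceding that theorem identifies these with the lattice points of the reduced body $\on{RBod}(\ell_i,\delta_i)\subset\Z\la\on{Res^+}(\ell_i)\ra$. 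Taking products over the finite set of local indices appearing in $Q(t)$ establishes the first bullet: reduced baskets biject with lattice points of the associated reduced body.

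For the second bullet, the extended basket decomposes as $\wt{\cB}_X=\mathcal{RB}\sqcup\widehat{\mathcal{IB}}$, where $\widehat{\mathcal{IB}}$ is the extended invisible basket, comprising the invisible basket together with the $T$-singularities of $X$. By Lemma \ref{lem:can} together with Example \ref{ex:T}, every member of $\widehat{\mathcal{IB}}$ arises as a shard in a shattering of some $T$-singularity, which gives the structural claim. Lemma \ref{lem:degf} and the additivity of Lemma \ref{lem:add} then force
\[
\mathcal{IK}^2=12-K_X^2-\sum_{\sigma\in\mathcal{RB}}A_\sigma,
\]
which is extracted purely from $H(t)$ and $\mathcal{RB}$, and Corollary \ref{cor:pos} guarantees this is a nonnegative integer, giving the promised control. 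The main obstacle is conceptual bookkeeping rather than new mathematics: $H(t)$ pins down only the aggregate degree contribution $\mathcal{IK}^2$ of $\widehat{\mathcal{IB}}$ and not the particular $T$-cones or the way they shatter, so the statement has to be phrased in terms of total degree contribution rather than an explicit list of singularities.
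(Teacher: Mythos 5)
Your argument assembles exactly the ingredients the paper itself cites as establishing the theorem — Corollary \ref{cor:sdeg} to extract $K_X^2$ and the total orbifold contribution, Conjecture \ref{conj:2} to split by local index, Theorem \ref{thm:fin} and the reduced-body construction for finiteness of reduced baskets, Lemma \ref{lem:can} for the structure of the invisible part, and the degree bookkeeping via Lemma \ref{lem:degf}, Lemma \ref{lem:add}, and Corollary \ref{cor:pos} — so this is essentially the same proof the paper is gesturing at when it says the result is ``established by the results and discussion of the last two sections.'' One small imprecision: you write $\wt{\cB}_X=\mathcal{RB}\sqcup\widehat{\mathcal{IB}}$, but $\mathcal{RB}$ and $\mathcal{IB}$ are sub-multisets of the basket $\cB_X$ (residues), whereas $\wt{\cB}_X$ is the extended basket of actual singularities; the clean decompositions are $\cB_X=\mathcal{RB}\sqcup\mathcal{IB}$ and $\widehat{\mathcal{IB}}=\mathcal{IB}\cup\{\text{$T$-singularities on $X$}\}$, with the latter not a subset of either basket. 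This slip does not affect the degree computation or the conclusion.
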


\begin{example} For the total orbifold contribution
$$\frac{8t^3-t^2+8t}{5(1-t^5)}$$
there are 18 possible reduced baskets which all lie in the affine plane $(5,0,0,3)+\on{ker}{\Phi}$. Unlike in Example \ref{ex:l53}, the reduced body contains multiple lattice points along a single ray: $(5,0,0,3)+\lambda(1,0,-1,2)$, which have differing degree contributions. This shows that, in general, the total degree contribution from the reduced basket depends on more than simply the orbifold contribution.
\end{example}

To discuss an application of this result, recall that the Gorenstein index $\ell_X$ of a Fano variety $X$ is the smallest positive integer $m$ such that $mK_X$ is Cartier. For orbifold del Pezzo surfaces, this is the lowest common multiple of the local indices of all the singularities on $X$.

\begin{example}\label{ex:l55} Returning to the total orbifold contribution of Example \ref{ex:l54} with $\delta$-vector $(2,1,2)\in\Delta(5)$ it follows from the above that a basket for an orbifold del Pezzo surface $X$ of Gorenstein index $5$ with this orbifold contribution can contain at most $82$ singularities: $4$ at most from a reduced basket and then at most $5+1=6$ in a minimal cancelling tuple from the discussion of the $n=0$ case in the proof of Theorem \ref{thm:fin}, of which there can be at most $13=\lfloor\frac{68}{5}\rfloor$ from the Fano condition $K_X^2>0$. If $X$ is allowed to have Gorenstein index $5\ell$ then imitating this calculation yields that the number of singularities on $X$ can be at most
$$4+13(5\ell+1)=65\ell+17$$
\end{example}

This example generalises easily using Theorem \ref{thm:main} to the following corollary.

\begin{cor}\label{cor:bdd} Fix a total orbifold contribution $Q\in\Q(t)$ and a positive integer $\ell_*$. There exists a number $N(Q,\ell_*)$ dependent only on and computable from $Q$ and $\ell_*$ such that any orbifold del Pezzo surface with total orbifold contribution $Q$ and Gorenstein index bounded above by $\ell_*$ has at most $N(Q,\ell_*)$ singularities.
\end{cor}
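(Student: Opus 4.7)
The plan is to reduce the statement to the finiteness machinery already established for a single local index, and then to bound the ``invisible'' contribution uniformly using the Fano condition $K_X^2 > 0$. Fix $Q$ and $\ell_*$. Since any orbifold del Pezzo surface $X$ of Gorenstein index at most $\ell_*$ has every local index $\ell_\sigma$ dividing $\ell_*$, only finitely many local indices $\ell$ appear in $\mathcal{B}_X$, namely $\ell \in \{d : d \mid \ell_*\}$. Subject to Conjecture~\ref{conj:2}, the total orbifold contribution splits uniquely as $Q = \sum_{\ell \mid \ell_*} Q(\ell)$, so the $\ell$th part $Q(\ell)$ is computable from $Q$ and $\ell_*$.

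Next I would apply Theorem~\ref{thm:main} (equivalently Theorem~\ref{thm:fin}) to each local index $\ell \mid \ell_*$: there are only finitely many reduced baskets $\mathcal{RB}(\ell)$ whose total orbifold contribution equals $Q(\ell)$, and they are enumerated by the lattice points of the reduced body at that local index. Taking unions over $\ell \mid \ell_*$, the full reduced basket $\mathcal{RB} = \bigsqcup_\ell \mathcal{RB}(\ell)$ has size bounded above by some $B_1(Q,\ell_*)$, obtained by maximising the sum $\sum_\ell |\mathcal{RB}(\ell)|$ over the finite set of choices. The associated reduced degree contribution $\mathcal{RK}^2 = \sum_\ell \mathcal{RK}^2(\ell)$ also ranges over a finite explicit set of rational values.

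The main obstacle is controlling the invisible part, since a priori the extended invisible basket $\widehat{\mathcal{IB}}$ can contain unboundedly many singularities even for fixed $Q$. Here the Fano condition enters: by the degree decomposition
\[
K_X^2 = 12 - \mathcal{RK}^2 - \mathcal{IK}^2,
\]
together with $K_X^2 > 0$, we get $\mathcal{IK}^2 < 12 - \mathcal{RK}^2$. Maximising the right side over the finite collection of possible reduced baskets yields an upper bound $\mathcal{IK}^2 \leq B_2(Q)$ for a computable integer $B_2(Q)$. By Corollary~\ref{cor:pos}, every minimal cancelling tuple contributes a positive integer to $\mathcal{IK}^2$, and likewise every $T$-singularity contributes a positive integer. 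Hence the number of minimal cancelling tuples and $T$-singularities composing $\widehat{\mathcal{IB}}$ is at most $B_2(Q)$.

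Finally, to convert this to a bound on singularities rather than tuples, I would invoke the $n=0$ estimate from the proof of Theorem~\ref{thm:fin}: a minimal cancelling tuple at local index $\ell$ contains at most $\ell+1$ singularities, hence at most $\ell_*+1$ since $\ell \mid \ell_*$. A $T$-singularity is a single singularity. Combining these observations yields the bound
\[
|\mathcal{B}_X| + |\{T\text{-sings on }X\}| \leq B_1(Q,\ell_*) + (\ell_* + 1)\, B_2(Q),
\]
which is a valid choice of $N(Q,\ell_*)$, depending only on and computable from $Q$ and $\ell_*$. The one place where care is needed is consistency of the uniform bound across all reduced baskets, which is handled by simply taking the maximum over the finitely many cases supplied by Theorem~\ref{thm:main}; the rest is bookkeeping.
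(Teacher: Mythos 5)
Your proposal is correct and takes essentially the same approach as the paper, which establishes this corollary by pointing to Example~\ref{ex:l55} together with the $n=0$ case in the proof of Theorem~\ref{thm:fin}: bound the reduced degree contribution over the finitely many reduced baskets, use $K_X^2>0$ together with Corollary~\ref{cor:pos} to bound the nonnegative integer $\mathcal{IK}^2$, and cap the size of each minimal cancelling tuple at local index $\ell$ by roughly $\ell+1\le\ell_*+1$. Your write-up simply makes explicit the uniformity over the divisors of $\ell_*$ that the paper leaves as a remark.
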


As in the example, it is straightforward to compute $N(Q,\ell_*)$ using the proof of Theorem \ref{thm:fin} once the reduced bodies for the $\ell$th pieces of $Q$ have been found.

\subsection{Degree bounds}

Let $H(t)\in\N\ldbrack t\rdbrack$. If $H(t)$ is the Hilbert series of an orbifold del Pezzo surface $X$, then $X$ must have degree given by the formula in Lemma \ref{lem:hilb}. Denote this number by $K_H^2$. As seen in the previous section, choosing a reduced basket to capture the total orbifold contribution of $H(t)$ enforces a choice of what the degree contribution of a corresponding invisible basket is.
\\

More precisely, for a choice of reduced basket $\mathcal{RB}$ and invisible basket $\mathcal{IB}$ one requires
$$K_H^2=12-\mathcal{RK}^2-\sum_{\sigma\in\widehat{\mathcal{IB}}}\frac{\on{width}(\sigma)}{\ell_\sigma}$$
where $\mathcal{RK}^2:=\sum_{\sigma\in\mathcal{RB}}{A_\sigma}$. Since the last term is nonnegative, one must have $K_H^2+\mathcal{RK}^2\leq12$. There are three cases:
\begin{itemize}
\item if $K_H^2+\mathcal{RK}^2<12$ then there are infinitely many possibilities for the extended invisible basket of an orbifold del Pezzo surface with Hilbert series $H(t)$
\item if $K_H^2+\mathcal{RK}^2=12$, then there is only one possible extended invisible basket for an orbifold del Pezzo surface with Hilbert series $H(t)$ containing $\mathcal{RB}$, which is the empty set since there is no more flexibility in the degree allowing one to add $T$-singularities or cancelling tuples
\item If $K_H^2+\mathcal{RK}^2>12$ then there are no possible extended invisible baskets for orbifold del Pezzo surfaces surfaces with Hilbert series $H(t)$.
\end{itemize}
Testing across all reduced baskets gives the following non-existence result. 

\begin{cor}\label{cor:nonex} With notation as above, if $K_H^2+\mathcal{RK}^2>12$ for all reduced baskets associated to $H(t)$ then there are no orbifold del Pezzo surfaces with Hilbert series $H(t)$.
\end{cor}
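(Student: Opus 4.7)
The plan is a direct proof by contradiction, relying on the three-case analysis laid out in the paragraph immediately preceding the corollary. Suppose, for contradiction, that there exists an orbifold del Pezzo surface $X$ with $\Hilb_X(t)=H(t)$. By Corollary \ref{cor:sdeg} the degree of $X$ is determined by its Hilbert series, so $K_X^2 = K_H^2$. Decompose the basket of $X$ as $\cB_X = \mathcal{RB} \sqcup \mathcal{IB}$, a reduced basket together with an invisible basket; by Theorem \ref{thm:main} this reduced basket $\mathcal{RB}$ is one of the finitely many reduced baskets associated to $H(t)$.

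Next I would invoke the degree formula of Lemma \ref{lem:degf} together with Example \ref{ex:T} and Corollary \ref{cor:can}, which collectively tell us that every $T$-singularity $\tau$ (and every cancelling tuple shattered from one) contributes $A_\tau = \on{width}(\tau)/\ell_\tau \geq 0$ to the degree, while making no orbifold contribution to $\Hilb_X(t)$. Combining the contributions from $\mathcal{RB}$ and from the extended invisible basket $\widehat{\mathcal{IB}}$ (which incorporates the ambient $T$-singularities as explained in the text), one obtains
$$K_H^2 \;=\; K_X^2 \;=\; 12 - \mathcal{RK}^2 - \mathcal{IK}^2,$$
where $\mathcal{IK}^2 = \sum_{\sigma \in \widehat{\mathcal{IB}}} \on{width}(\sigma)/\ell_\sigma \geq 0$ since widths and local indices are positive integers.

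Rearranging gives $K_H^2 + \mathcal{RK}^2 \leq 12$ for the particular reduced basket $\mathcal{RB}$ attached to $X$. But the hypothesis of the corollary is that $K_H^2 + \mathcal{RK}^2 > 12$ for \emph{every} reduced basket associated to $H(t)$, a direct contradiction. There is essentially no obstacle here: once the preceding degree-decomposition machinery and the finiteness statement of Theorem \ref{thm:main} are in hand, the corollary is a one-line consequence of nonnegativity of $\mathcal{IK}^2$. The only point worth underlining is that the finiteness of the collection of reduced baskets is exactly what makes the universal quantifier in the hypothesis a condition one can in principle check, so the corollary has real content rather than merely being vacuous.
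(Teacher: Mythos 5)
Your proof is correct and matches the paper's own argument essentially verbatim: both hinge on the degree decomposition $K_H^2 = 12 - \mathcal{RK}^2 - \mathcal{IK}^2$ and the nonnegativity of $\mathcal{IK}^2$, which forces $K_H^2 + \mathcal{RK}^2 \leq 12$ for the reduced basket of any putative surface $X$ with Hilbert series $H(t)$. You frame it as a contradiction rather than the paper's three-case discussion, and you correctly flag that the finiteness of Theorem~\ref{thm:main} is what makes the hypothesis checkable rather than being logically load-bearing, but the mathematical content is the same.
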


For example there are no orbifold del Pezzo surfaces with Hilbert series
$$\frac{1+mt+t^2}{(1-t)^3},m\geq11$$
since the only reduced basket for this power series is the empty set with $\mathcal{RK}^2=0$ and so $K_H^2+\mathcal{RK}^2=m+2>12$. Observe that in addition there are no toric orbifold del Pezzo surfaces with Hilbert series
$$\frac{1+10t+t^2}{(1-t)^3}$$
since a projective toric surface has at least three (possibly smooth) affine pieces or singularities corresponding to the faces of its polygon and so must feature at least one cancelling tuple.
\\

In general, as there are only finitely many reduced baskets for a given total orbifold contribution, the discussion above along with the Fano condition $K_X^2>0$ show how to produce bounds on the degree of any orbifold del Pezzo surface with that total orbifold contribution.

\begin{thm}\label{thm:deg} For a given collection of residual singularities $\cB$ there exist constants $m>0$ and $M>0$ dependent only on and computable from $\cB$ such that, for any orbifold del Pezzo surface $X$ with $\cB_X=\cB$,
$$m\leq K_X^2\leq M$$
\end{thm}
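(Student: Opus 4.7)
The plan is to combine the degree decomposition developed in the preceding subsection with the Fano condition $K_X^2 > 0$. For any orbifold del Pezzo surface $X$ with $\cB_X = \cB$, that subsection supplies
$$K_X^2 = 12 - \mathcal{RK}^2 - \mathcal{IK}^2,$$
where $\mathcal{RK}^2 = \sum_{\sigma \in \mathcal{RB}} A_\sigma$ and $\mathcal{IK}^2 = \sum_{\sigma \in \widehat{\mathcal{IB}}} \on{width}(\sigma)/\ell_\sigma \geq 0$. Since $\cB$ is a fixed finite multiset there are only finitely many ways to split $\cB$ as $\mathcal{RB} \sqcup \mathcal{IB}$ with $\mathcal{IB}$ a maximal invisible submultiset, so $\mathcal{RK}^2$ attains only finitely many values, each computable directly from $\cB$. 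Writing $A_\cB := \sum_{\sigma \in \cB} A_\sigma$, for every such decomposition $\mathcal{RK}^2 = A_\cB - \sum_{\sigma \in \mathcal{IB}} A_\sigma$, and by Corollary \ref{cor:pos} the subtracted quantity lies in $\mathbb{Z}_{\geq 0}$; consequently all these values of $\mathcal{RK}^2$ share the fractional part of $A_\cB$.

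For the upper bound I would use the nonnegativity $\mathcal{IK}^2 \geq 0$ to obtain $K_X^2 \leq 12 - \mathcal{RK}^2$, and take $M$ to be the largest of the finitely many values $12 - \mathcal{RK}^2$ as $\mathcal{RB}$ varies over valid choices; this depends only on $\cB$ and is automatically positive whenever an $X$ with $\cB_X = \cB$ exists. For the lower bound, I would invoke Example \ref{ex:T} together with Corollary \ref{cor:can} to group $\widehat{\mathcal{IB}}$ into cancelling tuples and pure $T$-singularities, each group contributing a positive integer $d = \on{width}(\tau)/\ell_\tau$ to $\mathcal{IK}^2$. Hence $\mathcal{IK}^2 \in \mathbb{Z}_{\geq 0}$ and $K_X^2 \equiv -A_\cB \pmod{\mathbb{Z}}$, so the fractional part of $K_X^2$ is pinned down by $\cB$ alone. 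Combined with the Fano condition $K_X^2 > 0$, this forces $K_X^2 \geq m$, where $m$ is the smallest positive rational congruent to $-A_\cB$ modulo $\mathbb{Z}$: explicitly $\{-A_\cB\}$ when $A_\cB \notin \mathbb{Z}$ and $1$ otherwise.

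The main obstacle is justifying that the Euler-number term from Lemma \ref{lem:degf} is legitimately absorbed into $\mathcal{IK}^2$ as a nonnegative integer, without spoiling either its sign or its integrality, since otherwise the fractional-part control in the lower bound could fail. Granting the analysis of this absorption indicated in the preceding subsection, the remainder is routine bookkeeping and both $m > 0$ and $M > 0$ are explicitly computable from $\cB$.
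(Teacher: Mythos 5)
Your argument is essentially the paper's argument: both use the decomposition $K_X^2 = 12 - \mathcal{RK}^2 - \mathcal{IK}^2$ from the preceding subsection, both obtain the upper bound from $\mathcal{IK}^2\geq0$ together with the finitely many admissible choices of $\mathcal{RB}$, and both obtain the lower bound by observing that $\mathcal{IK}^2\in\Z_{\geq0}$ together with Corollary \ref{cor:pos} pins down the fractional part of $K_X^2$, after which the Fano condition $K_X^2>0$ gives a positive lower bound. You are right to flag the absorption of the Euler-number term into $\mathcal{IK}^2$ as the one place the derivation leans on the preceding subsection; the paper makes exactly the same appeal.

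One small but genuine improvement in your version: you get the finiteness of possible $\mathcal{RB}$ for free from the fact that $\mathcal{RB}\subset\cB$ and $\cB$ is a fixed finite multiset. The paper's text instead invokes the finiteness of reduced baskets for a given total orbifold contribution, which is Theorem \ref{thm:fin} and therefore conditional on Conjecture \ref{conj:1}. Since here $\cB$ is fixed in advance, your observation shows that Theorem \ref{thm:deg} does not actually need Conjecture \ref{conj:1}, which is worth noting. You also spell out explicitly that $m$ may be taken to be $\{-A_\cB\}$ (or $1$ when $A_\cB\in\Z$), which the paper leaves implicit; that is a welcome clarification, not a divergence.
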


\subsection{A different perspective}

Broadly speaking, the approach taken so far in this paper has been deformation-theoretic: we have sought to classify the possible collections of singularities that could correspond to baskets of singularities (which are by construction deformation classes of singularities) on an orbifold del Pezzo surface with a given Hilbert series. An alternative perspective one could take is to allow not just deformations but also crepant blowups. From this perspective, residual singularities are replaced by indecomposable singularities as the appropriate notion of `rigid' singularities.
\\

To briefly explore this perspective, define two invariants $\Psi_\ell(X)$ and $\wt{\Psi}_\ell(X)$ of an orbifold del Pezzo surface $X$ by
$$\Psi_\ell(X):=\rho(\cB_X(\ell))\in\N\la\mathcal{R}_\ell\ra\text{ and }\wt{\Psi}_\ell(X):=\rho(\wt{\cB}_X(\ell))\in\N\la\mathcal{R}_\ell\ra$$
That is, $\Psi_\ell(X)$ is the collection of indecomposable singularities obtained by maximally shattering all the residues of singularities of local index $\ell$ on $X$, and $\wt{\Psi}_\ell(X)$ is the collection of indecomposable singularities obtained by maximally shattering all singularities on $X$. The former allows both deformations and crepant blowups to be taken; the latter allows no deformation, for example, it recognises the $T$-singularities on $X$. In this language, Theorem \ref{thm:main} becomes the following.

\begin{thm} Fix a power series $H(t)\in\N\ldbrack t\rdbrack$. There exists an affine ray $\varrho_{H(t)}\subset\N\la\mathcal{R}_\ell\ra$ such that any orbifold del Pezzo surface $X$ with Hilbert series $H(t)$ has the property that $\Psi_\ell(X)$ lies on $\varrho_{H(t)}$. Moreover the slope of $\varrho_{H(t)}$ is independent of $H(t)$ and corresponds to a maximally shattered elementary $T$-singularity of local index $\ell$. The same is true for $\wt{\Psi}_\ell(X)$ with the same ray $\varrho_{H(t)}$ (possibly truncated).
\end{thm}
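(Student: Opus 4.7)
The plan is to leverage the cyclic description of $\ker \wt{\Phi}_\ell^\Z$ supplied by Lemma \ref{lem:can} to convert the fibrewise structure of Theorem \ref{thm:main} into a single affine ray. Assuming Conjecture \ref{conj:2} we may work one local index $\ell$ at a time, so fix $\ell$.

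First, any orbifold del Pezzo surface $X$ with Hilbert series $H(t)$ determines the $\ell$th piece $Q_{\cB_X}(\ell)$ of the orbifold contribution and hence its $\delta$-vector $\delta(\ell) \in \Delta(\ell)$. By the additivity in Lemma \ref{lem:add}, $\wt{\Phi}_\ell^\Z(\Psi_\ell(X)) = \delta(\ell)$ for all such $X$, so every $\Psi_\ell(X)$ lies in the single fibre $(\wt{\Phi}_\ell^\Z)^{-1}(\delta(\ell))$. By the corollary to Lemma \ref{lem:can}, in the quotient $\Z\la\mathcal{R}_\ell/{\cong}\ra$ (an innocuous identification since $Q_\sigma = Q_{\bar\sigma}$) the kernel $\ker \wt{\Phi}_\ell^\Z$ is freely generated by the vector $v := (1, 2, \ldots, 2, 1)$, which is the image under $\rho$ of any elementary $T$-singularity of local index $\ell$. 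Hence any two values $\Psi_\ell(X_1), \Psi_\ell(X_2)$ differ by an integer multiple of $v$, so all $\Psi_\ell(X)$ lie on a single affine line $L$ of direction $v$. Since every entry of $v$ is strictly positive, $\varrho_{H(t)} := L \cap \N\la\mathcal{R}_\ell\ra$ is an affine ray extending in the $+v$ direction, containing every $\Psi_\ell(X)$. The direction $v$ depends only on $\ell$, not on $H(t)$, giving the universality of the slope.

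For the statement on $\wt{\Psi}_\ell$, each $\sigma \in \wt{\cB}_X(\ell)$ decomposes (by the Akhtar--Kasprzyk procedure) as $\sigma = \tau_1 * \cdots * \tau_k * \sigma_{\mathrm{res}}$ with each $\tau_i$ a $T$-singularity of local index $\ell$ and $\sigma_{\mathrm{res}}$ the residue (possibly trivial when $\sigma$ is itself $T$). By Lemma \ref{lem:can} each $\rho(\tau_i)$ is a nonnegative integer multiple of $v$, so $\rho(\sigma) = \rho(\sigma_{\mathrm{res}}) + \lambda_\sigma v$ with $\lambda_\sigma \in \N$. Summing over $\wt{\cB}_X(\ell)$ yields $\wt{\Psi}_\ell(X) = \Psi_\ell(X) + \Lambda_X \cdot v$ for some $\Lambda_X \in \N$, placing $\wt{\Psi}_\ell(X)$ on the same ray $\varrho_{H(t)}$; the set of achievable positions is a sub-ray whose base point is shifted by $\min_X \Lambda_X$, which is the ``possibly truncated'' clause.

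The main technical obstacle is the cyclic description $\ker \wt{\Phi}_\ell^\Z = \Z \cdot v$, which is exactly Lemma \ref{lem:can} (and depends on Conjecture \ref{conj:1}); everything else reduces to additivity of $\rho$ and nonnegativity bookkeeping. If one wished to sidestep Conjecture \ref{conj:1}, one could instead use Lemma \ref{lem:ray} combined with the convex-geometric conjecture proposed after Theorem \ref{thm:fin} to control the fibre of $\wt{\Phi}_\ell^\Z$ directly.
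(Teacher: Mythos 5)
Your proof is correct, and it matches the paper's intent: the paper presents this theorem as a direct reformulation of Theorem~\ref{thm:main} and gives no separate proof, so anything that successfully traces the translation is faithful. Your route is slightly more direct than what is implicit in the paper — you bypass the reduced/invisible basket bookkeeping and argue straight from the fibre structure of $\wt{\Phi}_\ell^\Z$ plus the one-dimensionality of its kernel (Lemma~\ref{lem:can}, hence Conjecture~\ref{conj:1}, with Conjecture~\ref{conj:2} used to localise at a single~$\ell$). That is exactly the machinery the paper would use anyway, so the underlying content is the same; your argument simply makes the step explicit instead of deferring to ``this is Theorem~\ref{thm:main} in different language.''

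One caveat worth flagging, though it is really a wrinkle in the paper's own statement rather than a flaw in your argument: the cyclic description $\ker\wt{\Phi}_\ell^\Z = \Z\cdot(1,2,\dots,2,1)$ holds in $\Z\la\mathcal{R}_\ell/{\cong}\ra$, not in $\Z\la\mathcal{R}_\ell\ra$ with dual singularities distinguished — in the latter the kernel also contains $\sigma-\bar\sigma$ for each non-self-dual pair (since $Q_\sigma=Q_{\bar\sigma}$), so it has rank $\frac{1}{2}\phi(\ell)$. Your parenthetical ``innocuous identification'' is doing real work: the affine ray you produce lives in $\N\la\mathcal{R}_\ell/{\cong}\ra$, and the literal assertion that it sits inside $\N\la\mathcal{R}_\ell\ra$ requires either reading $\mathcal{R}_\ell$ modulo isomorphism (which is consistent with $\cB_X$ being a multiset of isomorphism classes) or an extra argument that dual indecomposables never both arise. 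You should make this reading explicit rather than parenthetical. Your handling of $\wt{\Psi}_\ell$ via the Akhtar--Kasprzyk shattering of each $\sigma\in\wt{\cB}_X(\ell)$ into $T$-cones and its residue is exactly right and gives the truncated sub-ray.
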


The theorem also holds if the Hilbert series $H(t)$ is replaced by a total orbifold contribution $Q\in\Q(t)$.

\end{document}